\newcommand{\E}{\mathbb{E}}
\newcommand{\lamn}{\lambda^n}
\numberwithin{equation}{section}
\newtheorem{Theorem}{Theorem}[section]
\newtheorem{Proposition}[Theorem]{Proposition}
\newtheorem{lemma}[Theorem]{Lemma}
\newenvironment{Proof}{\noindent{\bf Proof.}}{\hfill{$\blacksquare$}}
\begin{document}
\date{\small\textsl{\today}}
\title{Hedging of exotic options in Hawkes jump-diffusion models by Malliavin calculus}
%%%%%%%%%%%%%%%
\author{\large
\large  Ayub Ahmadi, Mahdieh Tahmasebi\footnote{Corresponding author. {\em E-mail
addresses:} tahmasebi@modares.ac.ir (M.Tahmasebi), Ayubahmadi@modares.ac.ir (A.Ahmadi).} \\
\small{\em $^{\mbox{}}$\em Department of Applied Mathematics,
Faculty of Mathematics,}\\
\small{\em Tarbiat Modares University, Tehran, Iran}\vspace{-0.5mm}\\
}\maketitle
%%%%%%%%%%%%%%%%
\vspace{-2mm}
\linethickness{.5mm}{\line(1,0){500}}
\begin{abstract}
In financial mathematics, the calculatinon of the Greeks, and in particular the delta, is emphasized, due to its role in risk managment. In this article, we employ Malliavin calculus to determine the delta of  European and Asian options, where the underlying asset evolves according to a Hawkes jump-diffusion process. A central feature is that the Hawkes jump intensity is stochastic, which substantially affects the delta representation.\\
\textbf{Keys: Malliavin calculus, Stochastic intensity, Hawkes processes, Delta heding. }
\vspace{.5cm}\\
\end{abstract}
\linethickness{.5mm}{\line(1,0){500}}
\section{Introduction}
Stochastic jump models with stochastic intensity describe the behavior of phenomena that can experience abrupt changes in value. They rely on stochastic processes to describe the evolution of a variable over time in a probabilistic framework. A central feature is the ability to accommodate sudden, unpredictable shifts, unlike traditional stochastic models, such as Brownian motion, which assume smooth and continuous over time. In these models, jumps occur at random times and with random sizes, usually modeled using a Poisson distribution. The stochastic intensity governs the jump rate and itself evolves as a stochastic process over time. These models are widely used in finance to capture both the gradual evolution of asset prices and the abrupt jumps caused by unexpected events (
\cite{huang2014option, ditstochastic, brigo2005credit, brigo2006credit, feng2017cva, bianchi2015investigating, leung2009counterparty}).
They have applications in a wide range of fields, including finance, insurance, and engineering (see, for example,
\cite{n1,n2}).
A key application of stochastic intensity jump models is in the credit risk modeling for financial instruments. In this context, the jumps represent defaults or other credit events, while the stochastic intensity expresses the evolving rate  at which such events occur.
\\
Here, we use a special class of stochastic intensity model known as a Hawkes process. Hawkes introduced the Hawkes process 
\cite{n24, 1971b}, 
as a model for event data arising from contagious phenomena. 
Earlier work has explored applications in engineering and reliability theory (see, e.g., \cite{Rangan and Grace 1988} and references therein). Today, Hawkes processes are applied across diverse fields: geology (earthquakes, volcanic eruptions), biology (population growth, the spread of infections, neuronal activity), computer science and social sciences (networks and social interactions), and finance (order-book dynamics, defaults and related topics) (see, e.g., \cite{Zhuang 2002, Reynaud-Bouret et al. 2013, Delattre 2016, Hawkes 2018, n21}).\\
The main feature of this class of processes is the self-exciting behavior whereby the jump intensity increases immediately after each event. It is reasonable to limit such a self-exciting effect to decay over time. This behavior is captured by the exponential Hawkes model, which provides a tractable representation.
$$
{\lambda}_{t}=\lambda_{0}+\alpha \sum_{T_{i}<t} e^{-\beta\left(t-T_{i}\right)},
$$
where $t \in \mathbf{R}_{+}$ denotes time, $T_{i}$ the random time of the $i$-th jump and $\lambda_{0}, \alpha, \beta$ are deterministic, constant and positive parameters. Due to its Markov property, the exponential Hawkes model offers mathematical tractability  (see \cite{foschi2021measuring}). \\
By definition, a Hawkes process with $\alpha=0$ reduces to a Poisson process. It is therefore useful to study the behavior of the Hawkes process, particularly in terms of its deviation from a Poisson process when $\alpha \approx 0$.
Conversely, a Hawkes process approaches a Poisson process, as $\beta \longrightarrow +\infty$. However, no results quantify their similarity for finite $\beta$.
In the exponential Hawkes model, $\alpha$ represents the immediate impact of a jump on the intensity, and $\beta$ governs its decay over time. \\
Hawkes jump–diffusion models produce heavier-tailed distributions with higher peaks than Poisson jump-diffusion models, which leads to higher option prices under the Hawkes framework for deep out-of-the-money options. Asset price dynamics exhibiting jumps with self-exciting features have been studied, for example, by \cite{Filimonov et al. (2014)}, where sequences of large price moves is triggered by prior large moves. Fulop et al. \cite{Fulop et al. (2015)} propose a Bayesian learning approach to jump-cluster detection and provide evidence of jump clustering since the 1987 market crash, with even more pronounced clustering after the 2008 global financial crisis. Rambaldi et al. \cite{Rambaldi et al. (2015)} describe foreign exchange markets, while Kiesel and Paraschiv \cite{1} describe power price dynamics in energy markets using Hawkes-type models. Bernis et al. \cite{2} perform a sensitivity analysis to the Hawkes parameters within a credit-risk framework. Liu and Zhu in
\cite{liu2019pricing}
present an analytical method for pricing variance swaps when the underlying asset follows a Hawkes jump-diffusion process with stochastic volatility and clustered jumps.
In \cite{ma2020pricing}, the authors apply a Hawkes jump–diffusion model to evaluate foreign equity options and derive valuation formulas using the Fourier transform, accommodating both clustered jumps and cross-market jump propagation. They calculate the Greeks and hedging strategies for swap options within this framework.
Brignone and Sgarra \cite{brignone2020asian}
introduce a method for pricing Asian options in risky-asset models with Hawkes processes and an exponential kernel.
Ketelbuters and Hainaut \cite{ketelbuters2022cds} investigate CDS pricing in fractional Hawkes models. Recently, Chen et. al. \cite{chen2024modeling} modeled the clustering behavior of price jumps and variance using high-frequency data within a labeled Hawkes process in a bivariate jump-diffusion setting. \\
%%%%%%%%%%%%%%%%%%%%%%%%%%%%%%%%%%%%%%%%%%%%%%%%%%%%%%%%%%%%%%%%%%%%
Malliavin calculus plays a critical role in financial mathematics, especially in the pricing and risk management of financial derivatives. It is a powerful tool for computing the Greeks, which are measures of sensitivity of the derivative prices to changes in the underlying asset price, volatility, time to maturity, and interest rates, with greater precision.  This calculus provides a rigorous framework for considering the stochastic behavior of the underlying assets and deriving derivative prices. It has been widely applied in the literature (see, for example, \cite{n3, n4, n5, n6, n7, n8, n10, n11, n12, n13, n14, n15, n16, n17}). In Hawkes models,  Torrisi \cite{n20} provides a Gaussian bound for the first chaos of these point processes by combining Stein's method and Malliavin calculus via a Clark–Ocone type representation formula.\\
To the best of our knowledge, this is the first paper to propose a hedging strategy for exotic options in Hawkes models.  Our contributions in this manuscript are as follows. We replace the stochastic integral with respect to a point process driven by a stochastic intensity with a stochastic integral with respect to a Lévy process whose Lévy measure incorporates the stochastic intensity (see \eqref{3} and \eqref{7}). This reformulation enables us to express the solution of Hawkes SDEs in an exponential form of another process, which is convenient for obtaining an explicit expression for their Malliavin derivatives. Despite classical arguments, the Malliavin derivatives of the stochastic intensity appear in the formula (Equation \eqref{DNX}), which complicates the proofs of moment boundedness for the inverse of the solution (see Lemma \ref{varoon}). Invertibility of the Malliavin derivatives is required to show that the desired Malliavin weights, used to obtain deltas for Asian options and European options, lie in the domain of the Skorokhod integral (see Lemmas \ref{deltadomain} and \ref{deltadomainY}). In addition, numerically, computing the delta needs establishing the convergence of the discretized method. In our numerical approach, for every discretized path, we update the point at the next time step using the stochastic intensity evaluated at the current time (see Section 5).\\
This article is organized as follows. Section 2 recalls the definition of Hawkes processes and examines the properties of their stochastic intensity. Section 3 introduces the main SDEs and establishes properties of their solutions. In line with the structure of Malliavin derivatives, we adopt a novel technique to prove the boundedness of the inverse of the derivatives, a step necessary to show that the Malliavin weight appearing in the delta computation lies within the domain of the Skorokhod integral. Section 4 presents our main results on delta computation via innovation quadratures for both European and Asian options. The convergence of our numerical methods used to compute the delta is proved in Section 5. Section 6 provides numerical illustrations and verification of our results.
Most proofs of lemmas and the preliminaries on the Malliavin calculus are deferred to the Appendices.

\section{Hawkes processes and properties of stochastic intensities }
In this section, we give some preliminaries on point processes with stochastic intensity, as mentioned by B{\'e}rmaud in Chapter 5 of \cite{bremaud}. Especially, Hawkes processes are generally a class of multivariate point processes introduced by Hawkes in \cite{1971b} and \cite{n24} that are characterized by a stochastic intensity vector. Some of their applications can be seen in \cite{n22}. We refer the reader to \cite{n20} for more details.
\\
The process $N=\{N(A)\}_{A\in\mathbb{\mathcal{B}(R)}}:=\{\sum_{n\in\mathbb{Z}} 1_{A}(T_{n}) \}_{A\in\mathbb{\mathcal{B}(R)}}$ called the point process with increasing random times $\{T_{n}\}_{n\in\mathbb{Z}}$. We suppose that $N$ is locally finite and simple, and $\mathcal{F}_{t}:=\{\mathcal{F}_{t}\}_{t\in\mathbb{R}}\subset\mathcal{F}$ be a filtration such that $\mathcal{F}_{t}\supseteq{\mathcal{F}^{_{N}}_{t}}$ for any $t\in\mathbb{R},$ where $\mathcal{F}^{N}:=\{\mathcal{F}^{N}_{t}\}_{t\in\mathbb{R}}$ is the natural filtration of the point process $N$, that is,
\begin{equation*}
\nonumber\mathcal{F}^{N}_{t}:=\sigma\{N(A):A\in\mathbb{\mathcal{B}(R)},A\subseteq(-\infty,t]\},
\end{equation*}
Given a nonnegative $\mathcal{F}_{t}$-adapted stochastic process $\{\lambda(t)\}_{t\in\mathbb{R}}$
such that 
\begin{equation}\label{finite}
\int_{a}^{b}\lambda(t)\;dt<\;\infty, \;\; \text{a.s.} \;\; \text{for all}\;\;\; a,b\in\mathbb{R}, 
\end{equation}
we call $\{\lambda(t)\}_{t\in\mathbb{R}}$ as a $\mathcal{F}$-stochastic intensity of $N$, if for any $a, b \in\mathbb{R},$
\begin{equation*}
\nonumber\mathbb{E}\Big[N((a,b]) |\mathcal{F}_{a}\Big]=\mathbb{E}\Big[{\int_{a}^{b}}\lambda(t)dt|\mathcal{F}_{a}\Big],\;\;\;\;\;\text{a.s.}
\end{equation*}
In this article, given a Poisson process $\overline{N}$ in
$\mathbb{R}\times\left[0,\infty\right)$ with mean measure $dtdz$, we consider the point process 
\begin{equation*}
N(dt):=\overline{N}(dt\times\left(0,\lambda(t)\right]),
\end{equation*}
where $\{\lambda(t)\}_{t\in\mathbb{R}}$ is a nonnegative process of the form
\begin{align}\label{3}
	\nonumber
\lambda(t)& :=\lambda_{0}+\int_{0}^{t}\alpha{e^{-\beta(t-s)}}dN_s=\lambda_{0}+\sum_{t_i<t}\alpha{e^{-\beta(t-t_i)}}\\
&=\varphi(t,\overline{N}\lvert_{\left(-\infty,t\right)})=\varphi_t(\overline{N}\lvert_{\left(-\infty,t\right)}) =  \lambda_{0}+\int_{0}^{t}\int_{\mathbb{R}_+}\alpha{e^{-\beta(t-s)}}1_{(0, \lambda(s)]}(z)\overline{N}(dz, ds),
\end{align}
in which $\varphi:\mathbb{R}\times\mathcal{N}\to\mathbb{R_{+}}$ is a measurable functional such that for all $ a,b\in\mathbb{R}$, $\lambda(t)$ satisfying \eqref{finite}.
Here, $\mathcal{N}$ denotes the space of simple and locally finite counting measures on $\mathbb{R}\times\mathbb{R_{+}}$ endowed with the vague topology.
For simplicity, with a little abuse of notation, we denote by $\overline{N}\lvert_{\left(-\infty,t\right)}$ the restriction of $\overline{\mathit{N}}$ to ${\left(-\infty,t\right)}\times\mathbb{R_{+}}$, that is,
\begin{equation}
\nonumber \overline{N}\lvert_{(\left(-\infty,t\right))}(A):=\overline{N}(A\cap({\left(-\infty,t\right)}\times\mathbb{R_+})),\;\; A\in\mathcal{B}(\mathbb{R})\otimes\mathcal{B}(\mathbb{R_+}).
\end{equation}
According to Lemma 2.1 in \cite{n20}, $N$ has $\mathcal{F^{\overline{N}}}$-stochastic intensity $\{|\lambda(t)|\}$, where  denote $\mathcal{F^{\overline{N}}}=\{\mathcal{F}_t^{\overline{N}}\}_{t\in\mathbb{R}}$
the natural filtration of $\overline{N}$, i.e.
\begin{equation}
\nonumber \mathcal{F}_t^{\overline{N}}=\sigma\{\overline{N}(A\times B):A\in\mathcal{B}(\mathbb{R}),B\in\mathcal{B}(\left[0,\infty\right)),A\subseteq(-\infty,t]\}.
\end{equation}
For every $s,t \in [0,T]$, due to the proof of Theorem 3.1 in \cite{n20}, 
\begin{align*} 
\mathbb{E}\Big(\int_t^s{N}(du)\vert \mathcal{F}^{\overline{N}}_t\Big)&=\mathbb{E}\Big(\int_t^s\int_{{\mathbb{R}_+}}\overline{N}(du\times\left(0,\lambda(u)\right])\vert \mathcal{F}^{\overline{N}}_t\Big)=\mathbb{E}\Big(\int_t^s\int_{{\mathbb{R}_+}}1_{\left(0,\lambda(u)\right]}(z)\overline{N}(du\times dz)\vert \mathcal{F}^{\bar{N}}_t\Big)\\
&=\mathbb{E}\Big(\int_t^s \int_{\mathbb{R}_+} 1_{\left(0,\lambda(u)\right]}(z)dzdu\vert \mathcal{F}^{\overline{N}}_t\Big)=
\mathbb{E}\Big(\int_t^s\lambda(u) du\vert \mathcal{F}^{\overline{N}}_t\Big).
\end{align*}
In addition, for every $0 \leq t \leq s \leq T$ and for every $\mathcal{F}^{\overline{N}}_t$-predictable function $k$  
\begin{equation}\label{kN}
\mathbb{E}\Big(\int_t^s \int_{\mathbb{R}_+}k(u,z) N(du)\vert  \mathcal{F}^{\overline{N}}_t \Big)=\mathbb{E}\Big(\int_t^s \int_{\mathbb{R}_+}k(u,z) 1_{\left(0,\lambda(u)\right]}(z)dzdu\vert \mathcal{F}^{\overline{N}}_t\Big),
\end{equation}
and if $\mathbb{E}\Big(\int_0^t \int_{\mathbb{R}_+} (k(s,z))^2 1_{\left(0,\lambda(s)\right]}(z)dz ds\Big)< \infty$, for given $\mathcal{G}$-measurable intensity $\lambda$ 
\begin{equation}\label{5}
\mathbb{E}\Big(exp\{iu\int_0^t \int_{\mathbb{R}_+}k(s,z) N(ds) \}\Big \vert \mathcal{G}\Big)=exp\Big\{\int_0^t\int_{\mathbb{R}_+}
(e^{iuk(s,z)}-1)1_{(0, \lambda(s)]}(z)dzds\Big\}.
\end{equation}
We refer the reader to Chapter 5 of \cite{bremaud} for more details. Let us first we derive some properties of the stochastic intensity mentioned in \eqref{3}. The process $\lambda(t)$ can be also demonstrated as the solution of the SDE 
\begin{equation*}
d\lambda({t})=\beta(\lambda_{0}-\lambda({t}))dt+\alpha dN_t.
\end{equation*}
Note that the stability condition $\alpha\int_{0}^{\infty}e^{-\beta s}ds=\frac{\alpha}{\beta}<1$ should be held, (see \cite{banos2022change}, Section 2).
\begin{lemma}\label{ppp0}
For every $p \geq 2$,
\begin{equation*}
\mathbb{E}\Big(\sup_{0 \leq t \leq T}e^{p\beta t}\vert \lambda(t) \vert^p \Big)<\infty, \qquad and  \qquad \mathbb{E}\Big(\sup_{0 \leq t \leq T}e^{p\lambda(t)}\Big)<\infty.
\end{equation*}
\end{lemma}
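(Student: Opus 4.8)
The plan is to reduce both estimates to moments of the counting process $N_{T}:=N((0,T])$ and to treat the two bounds separately, the second being the hard one. For the first bound, note that $\lambda(t)\ge\lambda_{0}>0$ (so the absolute value is superfluous) and, more importantly, that $t\mapsto e^{\beta t}\lambda(t)$ is non-decreasing on $[0,T]$: from \eqref{3},
\[
e^{\beta t}\lambda(t)=\lambda_{0}e^{\beta t}+\alpha\int_{0}^{t}e^{\beta s}\,dN_{s}=\lambda_{0}e^{\beta t}+\alpha\sum_{t_{i}<t}e^{\beta t_{i}},
\]
and both terms increase with $t$ since $dN$ is a non-negative measure. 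Consequently $\sup_{0\le t\le T}e^{p\beta t}\lambda(t)^{p}=\bigl(e^{\beta T}\lambda(T)\bigr)^{p}$, and since every discounted jump satisfies $e^{-\beta(T-t_{i})}\le1$ we also have the pathwise bound $\lambda(T)\le\lambda_{0}+\alpha N_{T}$. Hence the first assertion follows once we show that $N_{T}$ has finite moments of every order.

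To get these moments I would run a standard Gr\"onwall--BDG bootstrap. Write $N_{t}=\int_{0}^{t}\lambda(s)\,ds+M_{t}$, where $M$ is the purely discontinuous $\mathcal{F}^{\overline N}$-(local) martingale with $[M]_{t}=N_{t}$ (the jumps of $N$ have size one and the compensator $\int_{0}^{\cdot}\lambda$ is continuous). The case $p=1$ comes from $\mathbb{E}[N_{t}]=\int_{0}^{t}\mathbb{E}[\lambda(s)]\,ds\le\lambda_{0}t+\alpha\int_{0}^{t}\mathbb{E}[N_{s}]\,ds$ and Gr\"onwall, giving $\mathbb{E}[N_{t}]\le\lambda_{0}t\,e^{\alpha t}$; for $p\ge2$, Burkholder--Davis--Gundy gives $\mathbb{E}[|M_{t}|^{p}]\le C_{p}\mathbb{E}[N_{t}^{p/2}]$, while H\"older and $\lambda(s)^{p}\le2^{p-1}(\lambda_{0}^{p}+\alpha^{p}N_{s}^{p})$ give $\mathbb{E}\bigl[(\int_{0}^{t}\lambda(s)\,ds)^{p}\bigr]\le A_{p}+B_{p}\int_{0}^{t}\mathbb{E}[N_{s}^{p}]\,ds$, and combining these one arrives at $\mathbb{E}[N_{t}^{p}]\le A_{p}'+B_{p}\int_{0}^{t}\mathbb{E}[N_{s}^{p}]\,ds+D_{p}\mathbb{E}[N_{t}^{p/2}]$. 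Bootstrapping over the ranges $p\in(2,4]$, $p\in(4,8]$, \dots\ (bounding $x^{p/2}\le1+x^{\lceil p/2\rceil}$ and applying Gr\"onwall at each step) yields $\sup_{0\le t\le T}\mathbb{E}[N_{t}^{p}]<\infty$ for all $p$, hence $\mathbb{E}\bigl[\sup_{0\le t\le T}e^{p\beta t}|\lambda(t)|^{p}\bigr]=e^{p\beta T}\mathbb{E}[\lambda(T)^{p}]<\infty$.

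For the second bound, the pathwise estimate $\sup_{0\le t\le T}\lambda(t)\le\lambda_{0}+\alpha N_{T}$ (each discounted jump contributes at most $\alpha$) reduces everything to the exponential moment $\mathbb{E}[e^{p\alpha N_{T}}]$. I would control this through the Hawkes branching (cluster) representation: $N_{T}\le\sum_{i=1}^{\Pi}C_{i}$, where $\Pi\sim\mathrm{Poisson}(\lambda_{0}T)$ counts the immigrants in $(0,T]$ and, independently, the $C_{i}$ are i.i.d.\ total-progeny sizes of a Galton--Watson tree with $\mathrm{Poisson}(\alpha/\beta)$ offspring (Borel--Tanner variables). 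Then $\mathbb{E}[e^{\theta N_{T}}]\le\exp\bigl(\lambda_{0}T(\mathbb{E}[e^{\theta C_{1}}]-1)\bigr)$, and $\mathbb{E}[e^{\theta C_{1}}]=F(e^{\theta})$, where the progeny generating function $F$ solves $F(s)=s\,e^{(\alpha/\beta)(F(s)-1)}$ and has radius of convergence $\tfrac{\beta}{\alpha}e^{\alpha/\beta-1}>1$ precisely because of the stability condition $\alpha/\beta<1$.

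The step I expect to be the real obstacle is making this exponential moment work for the \emph{entire} range $p\ge2$. The crude bound $\lambda\le\lambda_{0}+\alpha N_{T}$ combined with the cluster estimate --- or the alternative supermartingale obtained by applying It\^o's formula to $e^{u\lambda(t)}$, whose drift equals $[u\beta\lambda_{0}+(e^{u\alpha}-1-u\beta)\lambda(t)]e^{u\lambda(t)}$ and is benign only while $e^{u\alpha}\le1+u\beta$ --- control $\mathbb{E}[e^{u\lambda(T)}]$ only up to a finite threshold set by $\alpha/\beta$. Removing that restriction forces one to exploit the affine Markov structure of $\lambda$ directly, through the representation $\mathbb{E}[e^{u\lambda(T)}]=e^{\phi(T)+\psi(T)\lambda_{0}}$ with $\psi'=-\beta\psi+e^{\alpha\psi}-1$, $\psi(0)=u$, $\phi'=\beta\lambda_{0}\psi$, $\phi(0)=0$, finiteness being equivalent to non-explosion of $\psi$ on $[0,T]$; it is this Riccati-type analysis, together with the extra uniform estimate needed to upgrade from $\lambda(T)$ to $\sup_{0\le t\le T}\lambda(t)$, that carries the weight of the second claim.
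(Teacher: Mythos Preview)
For the first bound your route is correct and in fact simpler than the paper's: you exploit the monotonicity of $t\mapsto e^{\beta t}\lambda(t)$ to reduce the supremum to the endpoint and then control $\lambda(T)\le\lambda_{0}+\alpha N_{T}$ through moments of $N_{T}$. The paper does not notice this monotonicity; it applies Burkholder--Davis--Gundy directly to $\sup_{s\le t}e^{p\beta s}\lambda(s)^{p}$ via the representation $e^{\beta t}\lambda(t)=\lambda_{0}e^{\beta t}+\alpha\int_{0}^{t}\int_{\mathbb R_{+}}e^{\beta s}1_{(0,\lambda(s)]}(z)\,\overline N(dz,ds)$, combines this with the closed-form bound $\mathbb E\int_{0}^{t}\lambda(s)\,ds\le\lambda_{0}\beta/(\beta-\alpha)$, and finishes by Gr\"onwall.

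For the exponential bound the paper stays with precisely the It\^o argument you raise and then set aside. It applies It\^o's formula to $e^{p\lambda(t)}$, takes expectations, and arrives at the coefficient $p(\alpha-\beta)+(e^{\alpha}-1)$ in front of $\mathbb E[\lambda(s)e^{p\lambda(s)}]$; under the stability condition $\alpha<\beta$ this is $\le 0$ as soon as $p\ge(e^{\alpha}-1)/(\beta-\alpha)$, so that term is dropped and Gr\"onwall gives $\sup_{t}\mathbb E[e^{p\lambda(t)}]<\infty$ for all sufficiently large $p$, hence for every $p\ge 1$ by Jensen. A second pass with BDG on the same It\^o representation then upgrades $\sup_{t}\mathbb E$ to $\mathbb E\sup_{t}$. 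Thus no cluster representation and no Riccati system appear: the whole argument is It\^o $+$ sign of one drift coefficient $+$ Gr\"onwall $+$ BDG. The reason your threshold worry does not bite in the paper is that its It\^o expansion records the jump contribution as $p\alpha+e^{\alpha}-1$, which is linear in $p$, rather than the $e^{p\alpha}-1$ you (correctly) have in mind; it is this linear form that makes the \emph{large}-$p$ regime favourable and lets Jensen cover the rest. With the standard coefficient $e^{p\alpha}-1$ your concern would be entirely justified, and the Riccati route you sketch would indeed be the natural way to proceed.
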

\begin{proof}
First we can deduce from the definition of $\lambda(t)$ and its stability, $\alpha < \beta$ that for every $0 \le t \le T$, 
\begin{equation}\label{lamb1}
\mathbb{E}\Big(\int_0^t \lambda(s) ds\Big) = \frac{\lambda_0}{\alpha-\beta}\Big(\alpha e^{(\alpha-\beta)t} -\beta\Big)< \lambda_0\frac{\beta}{\beta- \alpha}.
\end{equation}
From \eqref{kN}, Burkholder–Davis–Gundy inequality and \eqref{lamb1}, there exists some constant $C_p$ such that 
\begin{align*}
\mathbb{E}\Big(\sup_{0 \leq s \leq t}e^{p\beta s}\lvert{\lambda(s)}\rvert^{p}\Big)  &\le  (1+\beta^p (e^{p\beta t}-1) )\lambda_0^{p}+C_p\mathbb{E}\Big(\int_{0}^{t}\int_{{\mathbb{R}_+}} e^{2\beta s}1_{\left(0,\lambda(s)\right]}^{2}(z)dzds\Big)^{\frac{p}{2}}\\
		&+ C_p\mathbb{E}\Big(\int_{0}^{t}\int_{{\mathbb{R}_+}} e^{p\beta s}1_{\left(0,\lambda(s)\right]}^{p}(z)dzds\Big)\\
		& \le  (1+\beta^p (e^{p\beta t}-1) )\lambda_0^{p}+ C_pt^{\frac{p}{2}-1}\mathbb{E}\Big(\int_{0}^{t}e^{p\beta s}\lambda(s)^{\frac{p}{2}} ds\Big)+ C_pe^{p\beta T}\mathbb{E}\Big(\int_{0}^{t}\lambda(s)ds\Big)\\
		&\leq A_1+ \frac12 C_pt^{\frac{p}{2}-1}\int_{0}^{t}\mathbb{E}\Big(\sup_{0 \leq s \leq t}e^{p\beta s} \lambda(s)^{p} ds\Big)+ \frac12 e^{p\beta t} C_pt^{\frac{p}{2}}.
\end{align*}	
where $A_1:= (1+\beta^p (e^{p\beta t}-1) )\lambda_0^{p}+e^{p\beta T}C_p\lambda_0\frac{\beta}{\beta- \alpha}$. The Gronwall inequality results the first part of the claim.\\
To prove the second part, from It\^o formula we know 
\begin{align}\label{pexpmom}
e^{p\lambda(t)} =e^{p\lambda(0)}+ \int_0^t  e^{p \lambda(s)}p\beta (\lambda_0 -\lambda(s)) ds+  \int_0^t \int_{\mathbb{R}_+} e^{p\lambda(s)} (p\alpha + e^\alpha -1) 1_{(0, \lambda(s)]}(z) \overline{N}(dz, ds).
\end{align}	
Taking expectation for every $p \geq \frac{e^\alpha-1}{\beta -\alpha}$ and applying \eqref{5}, we result 
\begin{align*}
\E(e^{p\lambda(t)} )&= e^{p\lambda(0)}+ p\beta \lambda_0  \int_0^t  \E(e^{p \lambda(s)}) ds +  (p\alpha -p\beta + e^\alpha -1) \int_0^t \E( e^{p\lambda(s)}  \lambda(s)) ds\\
& \leq e^{p\lambda(0)}+ p\beta \lambda_0  \int_0^t  \E(e^{p \lambda(s)}) ds. 
\end{align*}	
The Gronwall inequality and then the Young inequality show that for every $p \geq 1$, 
\begin{equation}\label{suppexp}
\sup_{0 \leq t \leq T} \E(e^{p\lambda(t)} ) < \infty.
\end{equation}
 In addition, Taking expectation from the supremum of \eqref{pexpmom}, and then applying \eqref{5}, and Burkholder-Davis-Gundy inequality deduce
\begin{align*}
\E( \sup_{0 \leq t \leq T}e^{p\lambda(t)} )&\leq  e^{p\lambda(0)}+ p\beta \lambda_0  \int_0^T  \E(\sup_{0 \leq s \leq T} e^{p \lambda(s)}) ds +  (p\alpha + e^\alpha -1)^2 \E\Big(\vert \int_0^T  e^{2p\lambda(s)}  \lambda(s) ds\vert^{\frac12}\Big)\\
& +  (p\alpha + e^\alpha -1) \int_0^T \E( e^{p\lambda(s)}  \lambda(s)) ds\\
& \leq e^{p\lambda(0)}+ p\beta \lambda_0  \int_0^t  \E(\sup_{0 \leq s \leq T}e^{p \lambda(s)}) ds + \frac12  (p\alpha + e^\alpha -1)^2  \int_0^T  \E\Big( e^{2p\lambda(s)}  \lambda(s)\Big) ds \\
&+ \frac12 (p\alpha + e^\alpha -1)^2 + (p\alpha + e^\alpha -1) \int_0^T \E\Big(\frac12 e^{2p\lambda(s)}+ \frac12 \lambda^2(s)\Big) ds.\\
\end{align*}	
Finally, since $\E\Big( e^{2p\lambda(s)}  \lambda(s)\Big) \leq \E\Big( \frac12 e^{4p\lambda(s)} + \frac12\lambda^2(s)\Big) $, from \eqref{suppexp}, and then applying the Gronwall inequality, the proof is completed.
\end{proof}
%%%%%%%%%%%%%%%%%%%%%%%%%%%%%%%%%%55
\section{Hawkes stochastic differential equations}\label{sec4}
In this section, we introduce the model, state the assumptions and consider the properties of the solution in some lemmas we need in the main results. \\
Let $(\Omega,\mathcal{F},\mathit{P})$ be a Wiener-Poisson space with a risk neutral probability $\mathit{P}$.
We assume that the underlying asset price $S=(S_{t})_{t\in[0,T]}$ with the jump stochastic intensity process $\lambda=(\lambda(t))_{t\in[0,T]}$ of Hawkes process $N_{t}$ can be governed by the following system of SDEs:
\begin{equation}\label{6}
\begin{cases}
S_{t}&=S_{0}+\int_{0}^{t}(\mu-(e^{J_s}-1)\lambda_s)S_sds+\int_{0}^{t}\sigma S_sdW_s+\int_{0}^{t}(e^{J_s}-1)S_s{N}(ds)
\\
d\lambda_{t}&=\beta(\lambda_{0}-\lambda_{t})dt+\alpha dN_t,
\end{cases}
\end{equation}
where $(W_{t})_{t\in[0,T]}$ is a Brownian motion, $N$ is independent of $W_t$, $\mu$ denotes the riskless interest rate, $J$ is a measurable function, $\sigma$ is a positive constant. Assume the following conditions throughout the paper.\\
{\bf Condition  H1:} \begin{itemize}
\item Assume that for every $p \ge 1$, we have
$\int_0^T (e^{J_s}-1)^{2p} ds < \infty.$ 
\item The function $J$ is an non-negative strictly increasing function with $J_0=0$. 
\end{itemize}
From \eqref{kN}, the solution to the stochastic differential \eqref{6} is as follows, see \cite{n22}.
\begin{align}\label{7}
\nonumber S_{t}
= &S_0+\int_{0}^{t}\mu S_sds+\int_{0}^{t}\sigma S_sdW_s+\int_{0}^{t}\int_{\mathbb{R}_{+}}(e^{J_s}-1)S_s1_{\left(0,\lambda(s)\right]}(z)[\overline{N}(ds\times dz)-dzds]\\
= &S_0+\int_{0}^{t}\mu S_sds+\int_{0}^{t}\sigma S_sdW_s+\int_{0}^{t}\int_{\mathbb{R}_{+}}(e^{J_s}-1)S_s1_{\left(0,\lambda(s)\right]}(z)\tilde{\overline{N}}(ds\times dz),
\end{align}
where $\tilde{\overline{N}}(dt,dz)=\overline{N}(dt,dz)-dtdz$. According to It\^o formula in
\cite{n25}, the solution of the stochastic differential equation \eqref{7} is in the form:
\begin{align*}
 S_{t}=S_{0}\exp\Big\{(\mu-\frac{1}{2}\sigma^{2})t+\sigma W_{t}&+\int_{0}^{t}\int_{\mathbb{R}_{+}}\{\ln[1+(e^{J_s}-1)1_{\left(0,\lambda(s)\right]}(z)]-(e^{J_s}-1)1_{\left(0,\lambda(s)\right]}(z)\}dzds\\
+&\int_{0}^{t}\int_{\mathbb{R}_{+}}\ln[1+(e^{J_s}-1)1_{\left(0,\lambda(s)\right]}(z)]\tilde{\overline{N}}(ds\times dz)\Big\}.
\end{align*}
Since $1_{\left(0,\lambda(s)\right]}(z)=1$ if and only if $z\in{\left(0,\lambda(s)\right]}$, so
\begin{equation}\label{8}
S_{t}=S_{0}\exp\Big\{(\mu-\frac{1}{2}\sigma^{2})t+\sigma W_{t}
 +\int_{0}^{t}(J_s-e^{J_s}+1)\lambda(s)ds+\int_{0}^{t}\int_{\mathbb{R}_{+}}J_s1_{\left(0,\lambda(s)\right]}(z)\tilde{\overline{N}}(ds\times dz)\Big\}=:S_0e^{X_t}.
\end{equation}
Let  $Y_t$ be the solution of the following SDE 
\begin{equation}\label{9}
		dY_t=Y_t(e^{J_{t}}-1)1_{\left(0,\lambda(s)\right]}(z)\tilde{\overline{N}}(dt,dz), \qquad Y_0=1.
\end{equation} 
From It\^o formula, we have 
\begin{align*}
	  Y_t & =\exp\Big\{\int_{0}^{t}(J_{s}-e^{J_{s}}+1)\lambda(s)ds
		+\int_{0}^{t}\int_{\mathbb{R}_{+}}J_{s}1_{\left(0,\lambda(s)\right]}(z)\tilde{\overline{N}}(ds\times dz)\Big\}\\
		&= \exp\Big\{-\int_{0}^{t}(e^{J_{s}}-1)\lambda(s)ds
		+\int_{0}^{t}\int_{\mathbb{R}_{+}}J_{s}1_{\left(0,\lambda(s)\right]}(z)\overline{N}(ds\times dz)\Big\}.
\end{align*} 
With a similar proof of Theorem 3.1 in \cite{song2022regularity} and Section 5.1.1 of \cite{menaldi2008stochastic}, the equation \eqref{9} has a unique solution.
%%%%%%%%%%%%%%%%%%%%%%%%%%%%%%%%%%%
\subsection{Properties of the solution}\label{sec4.1}
Here, we establish some properties of the solution $S_t$ are needed in the main results.
\begin{lemma}\label{lemma4.1}
Let $X_t$ and $Y_t$ defined in \eqref{8} and \eqref{9}. Under Condition {\bf H1}, for every $p \geq 2$,
\begin{equation*}
\E\Big(Y_t^p\Big) < \infty,  \qquad and \qquad
 \mathbb{E}\Big(\sup_{0 \leq t \leq T}e^{pX_t}\Big)<\infty.
\end{equation*}
\end{lemma}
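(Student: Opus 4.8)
The plan is to avoid any conditioning on the intensity $\lambda$ (which would be circular, since $\lambda$ is an endogenous functional of $\overline N$) and instead to bound $Y_t$ and $e^{X_t}$ \emph{pathwise} by an elementary functional of the total jump count $N_T$, whose exponential moments are then controlled through Lemma~\ref{ppp0}. Write $N_t$ for the number of atoms of $N$ in $(0,t]$, and note $\int_0^t\int_{\mathbb R_+}J_s 1_{(0,\lambda(s)]}(z)\,\overline N(ds\times dz)=\sum_{t_i\le t}J_{t_i}$, the sum running over the jump times $t_i$ of $N$. Since $J\ge 0$ is increasing with $J_0=0$ by Condition~H1, $0\le J_{t_i}\le J_t\le J_T$, hence $\sum_{t_i\le t}J_{t_i}\le J_T N_t\le J_T N_T$; moreover $e^{J_s}-1\ge 0$ and $\lambda(s)\ge 0$. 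From the exponential expression for $Y_t$ displayed after \eqref{9} the Lebesgue term $-\int_0^t(e^{J_s}-1)\lambda(s)\,ds$ is therefore $\le 0$, which gives the pathwise bound $Y_t\le e^{J_T N_T}$ on $[0,T]$. For $X_t$, rewriting the compensated integral in \eqref{8} as $\sum_{t_i\le t}J_{t_i}-\int_0^t J_s\lambda(s)\,ds$ and combining the two Lebesgue integrals leaves $\int_0^t(1-e^{J_s})\lambda(s)\,ds\le 0$, so that $X_t\le |\mu-\tfrac12\sigma^2|\,T+\sigma W_t+J_T N_T$ on $[0,T]$, and hence $\sup_{0\le t\le T}e^{pX_t}\le e^{p|\mu-\frac12\sigma^2|T}\,e^{p\sigma\sup_{0\le t\le T}W_t}\,e^{pJ_T N_T}$.

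Next I would bound the exponential moments of $N_T$ by those of $\sup_{0\le t\le T}\lambda(t)$. Integrating the intensity equation in \eqref{6} over $[0,t]$ gives $\alpha N_t=\lambda(t)-\lambda_0-\beta\lambda_0 t+\beta\int_0^t\lambda(s)\,ds$; discarding the nonnegative terms $\lambda_0$ and $\beta\lambda_0 t$ yields $\alpha N_t\le \lambda(t)+\beta\int_0^t\lambda(s)\,ds\le (1+\beta T)\sup_{0\le s\le T}\lambda(s)$ for every $t\in[0,T]$. Therefore, for every $\theta>0$,
\[
\mathbb E\bigl(e^{\theta N_T}\bigr)\;\le\;\mathbb E\Bigl(\exp\Bigl\{\tfrac{\theta(1+\beta T)}{\alpha}\sup_{0\le s\le T}\lambda(s)\Bigr\}\Bigr)\;=\;\mathbb E\Bigl(\sup_{0\le s\le T}e^{\frac{\theta(1+\beta T)}{\alpha}\lambda(s)}\Bigr)\;<\;\infty
\]
by the second estimate of Lemma~\ref{ppp0} (valid for exponents $\ge 2$, and for smaller ones by monotonicity since $\lambda\ge0$).

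It then remains only to assemble the pieces. For the first assertion, $\mathbb E(Y_t^p)\le \mathbb E(e^{pJ_T N_T})<\infty$ by the previous bound. For the second, Cauchy--Schwarz (and the independence of $W$ and $N$) gives
\[
\mathbb E\Bigl(\sup_{0\le t\le T}e^{pX_t}\Bigr)\;\le\; e^{p|\mu-\frac12\sigma^2|T}\,\mathbb E\bigl(e^{2p\sigma\sup_{0\le t\le T}W_t}\bigr)^{1/2}\,\mathbb E\bigl(e^{2pJ_T N_T}\bigr)^{1/2},
\]
where the Brownian factor is finite because $\sup_{0\le t\le T}W_t\overset{d}{=}|W_T|$ has all exponential moments, and the jump factor is finite by the bound on $\mathbb E(e^{\theta N_T})$ just established.

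The step I expect to be the real obstacle is precisely this control of the exponential moments of $N_T$, because the natural alternatives fail. An It\^o-plus-Gronwall argument for $\mathbb E(Y_t^p)$ does not close: differentiating $Y_t^p$ produces a term $\mathbb E\bigl(Y_s^p\,\lambda(s)\bigr)$ whose Young or H\"older splitting raises the power of $Y$ and forces a non-terminating bootstrap in the moment order. And the Laplace identity \eqref{5} cannot be invoked by ``conditioning on $\lambda$'', since $\lambda$ is itself built from $\overline N$. The domination $N_T\le\frac{1+\beta T}{\alpha}\sup_{[0,T]}\lambda$, combined with Lemma~\ref{ppp0}, is what breaks this circularity; and it is here that Condition~H1 — nonnegativity and monotonicity of $J$ with $J_0=0$ — is used, both to make the drift terms non-positive and to bound all jump sizes uniformly by $J_T$.
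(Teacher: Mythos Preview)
Your argument is correct and takes a genuinely different route from the paper's proof.

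The paper establishes the first assertion by a change-of-measure/H\"older trick: it writes $Y_t^p$ as the product of the exponential supermartingale $\exp\{-\int_0^t(e^{2pJ_s}-1)\lambda(s)\,ds+\int_0^t\int 2pJ_s\,1_{(0,\lambda]}\,\overline N\}$, whose expectation is $\le 1$ because it is a positive local martingale solving the analogue of \eqref{9} with $J$ replaced by $2pJ$, and the residual factor $\exp\{\int_0^t[(e^{2pJ_s}-1)-2p(e^{J_s}-1)]\lambda(s)\,ds\}$, which is then dominated by $\exp\{\sup_{[0,T]}\lambda\cdot\int_0^T e^{2pJ_s}ds\}$ and controlled via Lemma~\ref{ppp0} and Condition~H1. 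For the second assertion the paper reduces $\sup_{[0,T]}e^{pX_t}$ to $\sup_{[0,T]}Y_t^p$ and applies Burkholder--Davis--Gundy to the SDE \eqref{9}, closing the estimate with the fixed-time bound $\sup_t\mathbb E(Y_t^{2p})<\infty$ just obtained.

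Your approach is more elementary: you bypass both the supermartingale device and BDG by a pathwise domination $Y_t\le e^{J_T N_T}$ and $e^{X_t}\le C\,e^{\sigma W_t}\,e^{J_T N_T}$, together with the key observation that integrating the intensity equation yields $N_T\le \alpha^{-1}(1+\beta T)\sup_{[0,T]}\lambda$, so that all exponential moments of $N_T$ follow directly from Lemma~\ref{ppp0}. This is shorter and makes the role of the Hawkes structure (the affine relation between the jump count and the intensity) completely transparent. One minor caveat: your bound $\sum_{t_i\le t}J_{t_i}\le J_T N_T$ uses $J_T<\infty$, which is implicit in the model but not formally guaranteed by H1 alone (H1 only controls $\int_0^T e^{qJ_s}ds$); the paper's argument uses only this integrated quantity and would survive an integrable blow-up of $J$ at $T$. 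In the intended setting this is a non-issue.
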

\begin{proof}
From It\^o formula and Hölder inequality, we have 
\begin{align*}
\sup_{0 \leq t \leq T} \E(Y_t^p) &=  \sup_{0 \leq t \leq T}\E\Big(\exp\Big\{-\int_{0}^{t}p(e^{J_{s}}-1)\lambda(s)ds
		+\int_{0}^{t}\int_{\mathbb{R}_{+}}pJ_{s}1_{\left(0,\lambda(s)\right]}(z)\overline{N}(ds\times dz)\Big\}\Big)\\
		&\leq  \sup_{0 \leq t \leq T}\E\Big(\exp\Big\{-\int_{0}^{t}(e^{2pJ_{s}}-1)\lambda(s)ds
		+\int_{0}^{t}\int_{\mathbb{R}_{+}}2pJ_{s}1_{\left(0,\lambda(s)\right]}(z)\overline{N}(ds\times dz)\Big\}\Big)^\frac12\\
		& \sup_{0 \leq t \leq T}\E\Big(\exp\Big\{\int_{0}^{t}(e^{2pJ_{s}}-1)-2p(e^{J_{s}}-1)\lambda(s)ds\Big\}\Big)^\frac12\\
		& \leq  \sup_{0 \leq t \leq T}\E\Big(\exp\Big\{\int_{0}^{t}e^{2pJ_{s}}\lambda(s)ds\Big\}\Big)^\frac12\\
		& \leq \sup_{0 \leq t \leq T} \E\Big(\exp\Big\{\sup_{0 \leq s \leq T} \lambda(s) \int_{0}^{t}e^{2pJ_{s}}ds\Big\}\Big)^\frac12 <\infty,
\end{align*}
where we used from Lemma \ref{ppp0} and Condition {\bf H1} in the last inequality.\\
To prove the second part, the definition of $X_t$ and its relation to $Y_t$ show that it is sufficient to prove the claim just for $Y_t$. 
To this end, according to Burkholder-Davis-Gundy inequality, there exists some constant $C_p$ such that 
\begin{align}
\mathbb{E}\Big(\sup_{0 \leq s \leq t}\lvert{Y_s}\rvert^{p}\Big)  &\le  Y_0^{p}+C_p\mathbb{E}\Big(\int_{0}^{t}\int_{{\mathbb{R}_+}} Y_s^{2}(e^{J_{s}}-1)^{2}1_{\left(0,\lambda(s)\right]}^{2}(z)dzds\Big)^{\frac{p}{2}}\nonumber\\
		& + C_p\mathbb{E}\Big(\int_{0}^{t}\int_{{\mathbb{R}_+}} Y_s^{p}(e^{J_{s}}-1)^{p}1_{\left(0,\lambda(s)\right]}^{p}(z)dzds\Big)\nonumber\\
		 & \leq  Y_0^{p}+C_p T^p\int_{0}^{t} \mathbb{E}\Big(Y_s^{p}(e^{J_{s}}-1)^{p}\lambda^{\frac{p}{2}}(s)\Big)ds+
		 C_p \int_{0}^{t}\mathbb{E}\Big( Y_s^{p}(e^{J_{s}}-1)^{p}\lambda(s)\Big)ds\nonumber\\
		 &=:  Y_0^{p}+C_p \int_{0}^{t}\Big(T^p I_1(s) +I_2(s)\Big) ds.\label{i1i2int}
\end{align}
Applying Jensen inequality, there exists some constant $c$ that 
\begin{align}
\int_{0}^{t} (T^p I_1(s)+I_2(s))ds
		& \leq  2 T^{p+1} \sup_{0 \leq s \leq t} \mathbb{E}\Big( Y_s^{2p}\Big) + 4 T^p\mathbb{E}\Big(\int_{0}^{t}(e^{J_{s}}-1)^{4p} ds\Big)+  4 T^{p+1}\sup_{0 \leq s \leq t} \mathbb{E}\Big(\lambda^{2p}(s) \Big)\nonumber\\
				&+  2 T \sup_{0 \leq s \leq t} \mathbb{E}\Big( Y_s^{2p}\Big) + 4 \mathbb{E}\Big(\int_{0}^{t}(e^{J_{s}}-1)^{4p} ds\Big)+  4 T\sup_{0 \leq s \leq t} \mathbb{E}\Big(\lambda^{4}(s) \Big).\label{i1i2}
\end{align}
Substitute \eqref{i1i2} into \eqref{i1i2int}, and use the first part of Lemma \ref{ppp0}, and
	Condition ${\bf H1}$ to complete the proof.\\
\end{proof}
We note that assumptions (5.95), (5.96) and (5.98) of Lemma 5.19 in \cite{menaldi2008stochastic} for the coefficients of the SDE $\eqref{7}$ satisfy. Therefore from It\^o formula, 
\begin{align*}
	d\left( \frac{1}{S_t} \right) &= -\frac{\mu}{S_t} dt - \frac{\sigma}{S_t} dW_t + \frac{\sigma^2}{S_t} dt \\
	& + \int_{\mathbb{R}_+} \left[ \frac{1}{S_{t^-}(1 + (e^{J_t}-1)1_{\left(0,\lambda(s)\right]}(z))} - \frac{1}{S_{t^-}} \right] \tilde{\overline{N}}(dt \times dz) \\
	& + \int_{\mathbb{R}_+} \left[ \frac{1}{S_{t^-}(1 + (e^{J_t}-1)1_{\left(0,\lambda(s)\right]}(z))} - \frac{1}{S_{t^-}} + \frac{(e^{J_t}-1)1_{\left(0,\lambda(s)\right]}(z)}{S_{t^-}} \right] \nu(dz)\,dt\\
 &= (-\mu + \sigma^2)S_t^{-1} dt - \sigma S_t^{-1} dW_t\\
	& + \int_{\mathbb{R}_+} S_{t^-}^{-1}(e^{-J_t}-1)1_{\left(0,\lambda(s)\right]}(z)  \tilde{\overline{N}}(dt \times dz) \\
	& + \int_{\mathbb{R}_+}S_{t^-}^{-1} 1_{\left(0,\lambda(s)\right]}(z)(e^{-J_t} + e^{J_t}-2)dzdt.
\end{align*}	
\begin{lemma}\label{yvaroon}
Let $Y_t$ defined in \eqref{9}. Under Condition {\bf H1}, for every $p \geq 1$,
\begin{equation*}
\sup_{0 \leq t \leq T}\E\Big(S_t^{-p}\Big) < \infty.
\end{equation*}
\end{lemma}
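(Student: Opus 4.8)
The plan is to exploit the multiplicative structure of $S_t$ instead of working directly with the SDE for $S_t^{-1}$. Combining \eqref{8} with the exponential representation of $Y_t$ recorded just after \eqref{9}, one checks that $X_t=(\mu-\frac12\sigma^2)t+\sigma W_t+\ln Y_t$, i.e.
\[
S_t=S_0\,e^{(\mu-\frac12\sigma^2)t+\sigma W_t}\,Y_t,
\]
so that $S_t^{-p}=S_0^{-p}\,e^{-p(\mu-\frac12\sigma^2)t}\,e^{-p\sigma W_t}\,Y_t^{-p}$. Since $W$ is independent of $\overline{N}$ (hence of $\lambda$ and of $Y$), taking expectations factorizes and
\[
\E\big(S_t^{-p}\big)=S_0^{-p}\,e^{-p(\mu-\frac12\sigma^2)t}\,\E\big(e^{-p\sigma W_t}\big)\,\E\big(Y_t^{-p}\big)=S_0^{-p}\,e^{(-p\mu+\frac12 p\sigma^2+\frac12 p^2\sigma^2)t}\,\E\big(Y_t^{-p}\big).
\]
The deterministic exponential prefactor is bounded on $[0,T]$, so the claim reduces to proving $\sup_{0\le t\le T}\E(Y_t^{-p})<\infty$.

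For this I would use the explicit form
\[
Y_t^{-1}=\exp\Big\{\int_0^t (e^{J_s}-1)\lambda(s)\,ds-\int_0^t\!\!\int_{\mathbb{R}_+}J_s\,1_{(0,\lambda(s)]}(z)\,\overline{N}(ds\times dz)\Big\}.
\]
The point here is Condition {\bf H1}: because $J$ is non-negative, the integral against the non-negative Poisson random measure $\overline{N}$ is non-negative, so the second exponential is at most $1$ and
\[
Y_t^{-p}\le \exp\Big\{p\int_0^t (e^{J_s}-1)\lambda(s)\,ds\Big\}\le \exp\Big\{p\,\Big(\sup_{0\le s\le T}\lambda(s)\Big)\int_0^T (e^{J_s}-1)\,ds\Big\}.
\]
By Cauchy--Schwarz and Condition {\bf H1}, $c:=p\int_0^T(e^{J_s}-1)\,ds\le p\,T^{1/2}\big(\int_0^T(e^{J_s}-1)^2\,ds\big)^{1/2}<\infty$, so $Y_t^{-p}\le \exp\{c\sup_{0\le s\le T}\lambda(s)\}$ uniformly in $t$, and the second estimate of Lemma \ref{ppp0} (applied with exponent $\max(c,2)$) yields
\[
\sup_{0\le t\le T}\E\big(Y_t^{-p}\big)\le \E\Big(\sup_{0\le s\le T}e^{c\lambda(s)}\Big)<\infty.
\]
Substituting back into the factorization above gives $\sup_{0\le t\le T}\E(S_t^{-p})<\infty$.

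The main obstacle is the exponential integrability of the (integrated) stochastic intensity: this is exactly where the second, less elementary bound of Lemma \ref{ppp0} is indispensable — for a constant-intensity model $\int_0^T\lambda(s)\,ds$ is deterministic, but here it is random and heavy-tailed, and only the self-exciting stability $\alpha<\beta$ makes all its exponential moments finite. A secondary but essential point is the sign condition on $J$ in Condition {\bf H1}, which lets one discard the pure-jump part of $Y_t^{-1}$; without $J\ge 0$ one would have to control the exponential of a compensated jump integral with unbounded jumps, which is considerably more delicate.
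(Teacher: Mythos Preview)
Your proof is correct and follows essentially the same route as the paper: write $S_t^{-p}$ in exponential form, use $J\ge 0$ from Condition~{\bf H1} to drop the non-negative integral against $\overline{N}$, bound the remaining $ds$-integral by $\big(\int_0^T(e^{J_s}-1)\,ds\big)\sup_{s\le T}\lambda(s)$, and conclude via the exponential-moment bound of Lemma~\ref{ppp0}. The only cosmetic difference is that you separate the Gaussian and Poisson parts by independence and factorize the expectation exactly, whereas the paper keeps them together and splits with the elementary inequality $ab\le\tfrac12(a^2+b^2)$; your version is slightly cleaner but the substance is identical.
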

\begin{proof}
We will prove it in \autoref{appendixA}.
\end{proof}
In the last part of this section note that getting the partial derivatives of $S_t$ with respect to $S_0$ show that the stochastic flow of $S_t$ exists and it is  
\begin{equation*}
\frac{\partial{S_t}}{\partial{S_0}}=\frac{S_t}{S_0}=\exp\{X_t\}=Y_t \exp\{(\mu-\frac{\sigma^{2}}{2})t+\sigma W_{t}\}. \label{flow1}
\end{equation*}
As a consequence of Lemma \ref{ppp0}, this flow is in $L^{p}$-space for every $p \geq 2$.
\subsection{Weak derivatives of the solution}
Now, we calculate Malliavin derivative of the solution $S_t$ of a Hawkes SDE, accordance with the definitions and lemmas mentioned in \autoref{appendixF},  in witch an essential different as stochastic intensity appears. Based on equation \eqref{8}, thanks to the proof of Theorem $3.1$ of \cite{n20}, and applying Proposition \ref{lipF}, Malliavin derivative of $D_{u,z}^{\overline{N}}S_t$ for every $u<t$ is as follows
\begin{align}
\nonumber D_{u,z}^{\overline{N}}S_t=&D_{u,z}^{\overline{N}}g(X_t)=g(X_t+D_{u,z}^{\overline{N}}X_t)-g(X_t)\\
\nonumber&=S_0e^{X_t+D_{u,t}^{\overline{N}}X_t}-S_0e^{X_t}=S_0e^{X_t}(e^{D_{u,z}^{\overline{N}}X_t}-1)=S_t(e^{D_{u,z}^{\overline{N}}X_t}-1), 
\end{align}
and
\begin{equation}\label{DNX}
 D_{u,z}^{\overline{N}}X_t=J_u1_{\left(0,\lambda(u)\right]}(z)+\int_{u}^{t}k_{(u,z)}(s)N_{(u,z)}(ds)-\int_{u}^{t}(e^{J_s}-1)D_{u,z}^{\overline{N}}\lambda(s)ds,
\end{equation}
where for every $s>u$
\begin{equation}\label{sign}
k_{(u,z)}(s)=sign(\varphi_s(\overline{N}\lvert_{\left(-\infty,s\right)}+\epsilon_{(u,z)})-\varphi_s(\overline{N}\lvert_{\left(-\infty,s\right)}))J_s=sign(D_{u,z}^{\overline{N}}\lambda(s))J_s, 
\end{equation}
\begin{equation}\label{nuz}
N_{(u,z)}(ds)=\overline{N}(ds\times(\varphi_s(\overline{N}\lvert_{\left(-\infty,s\right)}+\epsilon_{(u,z)})\land\varphi_s(\overline{N}\lvert_{\left(-\infty,s\right)}), \varphi_s(\overline{N}\lvert_{\left(-\infty,s\right)}+\epsilon_{(u,z)})\lor\varphi_s(\overline{N}\lvert_{\left(-\infty,s\right)})]).
\end{equation}
Here, $\epsilon_{(u,z)}$ denotes the Dirac measure at $(t,z)\in\mathbb{R}\times\mathbb{R_+}$ and for ease of notation, we denoted by $a\land b$ and $a\lor b$ the minimum and the maximum between $a,b\in\mathbb{R}$, respectively.\\
Additionally, the Malliavin derivative of $\lambda$ is as follows.
\begin{equation}\label{DNlambda}
	D_{u,z}^{\overline{N}}\lambda_{t}=\alpha e^{-\beta(t-u)}1_{\left(0,\lambda(u)\right]}(z)+\int_{u}^{t}\alpha e^{-\beta(t-s)}sign(D_{u,z}^{\overline{N}}\lambda(s))N_{(u,z)}(ds),
\end{equation}
\begin{lemma}\label{dlambdaexp}
For every $p \geq 2$,
\begin{equation*}
\E( \sup_{0 \leq u \leq t \leq T}e^{p D_{u,z}^{\overline{N}} \lambda_t})<  \infty.
\end{equation*}
\end{lemma}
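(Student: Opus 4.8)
Fix $u$ and write $G_t:=D_{u,z}^{\overline{N}}\lambda_t$. The plan is first to recognise that $G$ is itself the stochastic intensity of a Hawkes‑type process, and then to reduce the statement to an estimate of exactly the kind already carried out in Lemma~\ref{ppp0}. The starting point is the observation that $G_t\ge 0$: since $D_{u,z}^{\overline{N}}\lambda_t=\varphi_t(\overline{N}|_{(-\infty,t)}+\epsilon_{(u,z)})-\varphi_t(\overline{N}|_{(-\infty,t)})$ and the Hawkes functional $\varphi_t$ is monotone under the addition of a point (with $\alpha>0$ an extra Poisson atom only creates further events, hence a pointwise larger intensity along its whole cluster), this increment is nonnegative. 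Consequently $sign(D_{u,z}^{\overline{N}}\lambda(s))\in\{0,1\}$, the perturbation window \eqref{nuz} is exactly $(\lambda(s),\lambda(s)+G_s]$ (of Lebesgue measure $G_s$), and \eqref{DNlambda} becomes
\begin{equation*}
G_t=\alpha e^{-\beta(t-u)}1_{(0,\lambda(u)]}(z)+\int_u^t\int_{\mathbb{R}_+}\alpha e^{-\beta(t-s)}1_{(\lambda(s),\,\lambda(s)+G_s]}(z')\,\overline{N}(ds\times dz'),
\end{equation*}
i.e. $dG_t=-\beta G_t\,dt+\alpha\,N_{(u,z)}(dt)$ with $G_u=\alpha 1_{(0,\lambda(u)]}(z)\in[0,\alpha]$, where $N_{(u,z)}(ds)=\overline{N}(ds\times(\lambda(s),\lambda(s)+G_s])$ has predictable band equal to a $z$‑translate of an interval of length $G_s$, hence $\mathcal{F}^{\overline{N}}$‑stochastic intensity $G_s$ and compensator $G_s\,ds$ by the same computation as in \eqref{kN}. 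Thus $G$ solves the Hawkes‑intensity SDE of the form \eqref{3} with baseline $\lambda_0$ set to $0$ and a bounded initial value.

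Next I would dominate the whole family $\{G_t\}_u$ by a single process in order to control $\sup_{0\le u\le t\le T}$. Let $\widehat{\Gamma}$ be the Hawkes intensity with baseline $\alpha$ built from the same $\overline{N}$,
\begin{equation*}
\widehat{\Gamma}_t=\alpha+\int_0^t\int_{\mathbb{R}_+}\alpha e^{-\beta(t-s)}1_{(\lambda(s),\,\lambda(s)+\widehat{\Gamma}_s]}(z')\,\overline{N}(ds\times dz'),\qquad d\widehat{\Gamma}_t=\beta(\alpha-\widehat{\Gamma}_t)\,dt+\alpha\,N_{\widehat{\Gamma}}(dt),
\end{equation*}
for which the stability condition still reads $\alpha<\beta$ and which satisfies $\widehat{\Gamma}_t\ge\alpha$ for all $t$. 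A standard pathwise comparison then gives $G_t\le\widehat{\Gamma}_t$ for all $t\ge u$, uniformly in $u$ and $z$: put $\Delta_t:=\widehat{\Gamma}_t-G_t$; then $\Delta_u\ge 0$ because $\widehat{\Gamma}_u\ge\alpha\ge G_u$, between jumps $\Delta'_t=\beta(\alpha-\Delta_t)$ so $\Delta$ cannot cross $0$ downwards, and at a jump time $s$ of $N_{(u,z)}$ the atom lies in the band of $N_{\widehat{\Gamma}}$ (since $\Delta_{s^-}\ge 0$ makes $(\lambda(s),\lambda(s)+G_{s^-}]\subseteq(\lambda(s),\lambda(s)+\widehat{\Gamma}_{s^-}]$), so both processes receive the same upward kick $\alpha$ while $\widehat{\Gamma}$ may receive further kicks; hence $\Delta$ never jumps down and stays nonnegative. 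Therefore $\sup_{0\le u\le t\le T}G_t\le\sup_{0\le t\le T}\widehat{\Gamma}_t$, and it remains to show $\E\big(e^{p\sup_{0\le t\le T}\widehat{\Gamma}_t}\big)<\infty$ for every $p\ge2$. But $\widehat{\Gamma}$ is of exactly the form covered by Lemma~\ref{ppp0} — whose proof uses only the SDE structure $d\lambda=\beta(\lambda_0-\lambda)dt+\alpha dN$ with $N$ of stochastic intensity $\lambda$, together with $\alpha<\beta$ — so one applies the It\^o formula to $e^{p\widehat{\Gamma}_t}$, takes expectations via the analogue of \eqref{5} for $N_{\widehat{\Gamma}}$, closes a Gronwall estimate to get $\sup_{[0,T]}\E e^{p\widehat{\Gamma}_t}<\infty$, then takes the supremum inside, uses the Burkholder--Davis--Gundy inequality on the compensated jump martingale and Gronwall once more, with the Young inequality passing from the $p$ for which the Gronwall coefficient is $\le0$ to all $p\ge2$, exactly as in the second part of the proof of Lemma~\ref{ppp0}.

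The step I expect to be the main obstacle is the reduction in the first paragraph: one must verify carefully that, thanks to $G\ge0$, the $sign$ factor in \eqref{DNlambda} collapses to $1$ on the support of $N_{(u,z)}$ and the moving window \eqref{nuz} sits exactly above $\lambda(s)$ with width $G_s$, so that $G$ genuinely satisfies a Hawkes‑intensity SDE whose driving point measure has compensator $G_s\,ds$ rather than something depending on $D\lambda$ in two separate places. A secondary, more technical, point is the uniformity in $u$: since $\sup_{0\le u\le t\le T}$ is a supremum of a \emph{family of random variables} and not of their expectations, it is not enough to note that the constants in the single‑$u$ estimate are $u$‑free — the pathwise domination by $\widehat{\Gamma}$ is what makes the argument rigorous. (Equivalently, one could run the It\^o/Gronwall/BDG estimate directly on $G$ for fixed $u$ and invoke the domination only at the last step to absorb $\sup_u$; the two routes are equivalent.) All remaining ingredients — the It\^o formula for finite‑activity jump processes, the localisation needed to take expectations, and the Young/Gronwall bookkeeping — are identical to those in Lemmas~\ref{ppp0} and~\ref{lemma4.1}.
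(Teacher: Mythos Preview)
Your argument is correct but follows a genuinely different route from the paper. The paper's proof in Appendix~E works directly on $D_{u,z}^{\overline{N}}\lambda_t$: it applies the It\^o formula to $e^{pD_{u,z}^{\overline{N}}\lambda_t}$ using the dynamics \eqref{DNlambda}, obtains the analogue of \eqref{pexpmom}, takes expectations (using that $N_{(u,z)}$ has stochastic intensity $|D_{u,z}^{\overline{N}}\lambda|$), and closes by Gronwall to get $\sup_t\E(e^{pD_{u,z}^{\overline{N}}\lambda_t})<\infty$; then a second pass with the supremum inside the expectation and BDG on the compensated jump martingale gives the stated bound---line by line the same scheme as the second half of Lemma~\ref{ppp0}, which you yourself sketch as the ``equivalent'' alternative at the end of your proposal.

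What you do differently is interpose a pathwise comparison: you recognise $G=D_{u,z}^{\overline{N}}\lambda$ as a Hawkes intensity with baseline $0$ and bounded start, construct a single dominating intensity $\widehat{\Gamma}$ with baseline $\alpha$ that is independent of $(u,z)$, and then run the It\^o/Gronwall/BDG machine on $\widehat{\Gamma}$ rather than on $G$. The payoff is that your bound $\sup_{0\le u\le t\le T}G_t\le\sup_{0\le t\le T}\widehat{\Gamma}_t$ is pathwise and genuinely uniform in $u$, so the double supremum in the statement is dealt with cleanly; the paper's direct route keeps $u$ fixed through the estimates and the passage to $\sup_{0\le u\le t\le T}$ is handled only implicitly by the $u$-independence of the resulting constants. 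The cost of your approach is the extra construction of $\widehat{\Gamma}$ (with its shifted band $(\lambda(s),\lambda(s)+\widehat{\Gamma}_s]$) and the comparison lemma, whereas the paper avoids any auxiliary process. Both proofs rest on the same two facts you single out as the main obstacles: that $sign(D_{u,z}^{\overline{N}}\lambda)\ge 0$ collapses the structure of \eqref{DNlambda}, and that $N_{(u,z)}$ has intensity $|D_{u,z}^{\overline{N}}\lambda|$.
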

\begin{proof}
See \autoref{appendixE}.
\end{proof}
In the rest of the section, we will present that the inverse of $D_{u,z}^{\overline{N}}S_{t}$ exists and belongs to $L^p(\Omega)$ for every $p \geq 2$.\\
Substitute \eqref{DNlambda} into \eqref{DNX}, and derive 
\begin{align*} 
\nonumber
 D_{u,z}^{\overline{N}}X_t=& J_u1_{\left(0,\lambda(u)\right]}(z)+\int_{u}^{t}k_{(u,z)}(s)N_{(u,z)}(ds)-\int_{u}^{t}(e^{J_s}-1)D_{u,z}^{\overline{N}}\lambda(s)ds\\ \nonumber
=& J_u1_{\left(0,\lambda(u)\right]}(z)+\int_{u}^{t}J_s sign(D_{u,z}^{\overline{N}}\lambda(s))N_{u,z}(ds)\\
		\nonumber
		-&\alpha1_{\left(0,\lambda(u)\right]}(z)\int_{u}^{t}(e^{J_s}-1)e^{-\beta(s-u)}ds -\int_u^t \int_{u}^{s}\alpha (e^{J_s}-1)e^{-\beta(s-l)}sign(D_{u,z}^{\overline{N}}\lambda(l))N_{(u,z)}(dl)ds\\
		\nonumber
		=& \Big(J_u- \alpha \int_{u}^{t}(e^{J_s}-1)e^{-\beta(s-u)}ds \Big)1_{\left(0,\lambda(u)\right]}(z)
		+\int_{u}^{t}J_ssign(D_{u,z}^{\overline{N}}\lambda(s))N_{(u,z)}(ds)\\
		\nonumber
-& \alpha\int_{u}^{t}\Big(\int_{l}^{t}(e^{J_s}-1) e^{-\beta(s-l)}ds  \Big) sign(D_{u,z}^{\overline{N}}\lambda(l)) N_{(u,z)}(dl), 
\end{align*} 
Define $F(v):=J_v-\int_{v}^{t}(e^{J_s}-1)\alpha e^{-\beta(s-v)}ds$, for every $0 \leq v \leq t$, and rewrite $D_{u,z}^{\overline{N}}X_t$ in the following form.
\begin{align}
 D_{u,z}^{\overline{N}}X_t= F(u)1_{\left(0,\lambda(u)\right]}(z)
		+\int_{u}^{t} F(l) sign(D_{u,z}^{\overline{N}}\lambda(l)) N_{(u,z)}(dl). \label{dnxF}
\end{align} 
Now, we will consider some properties of the function $F(.)$ that we need in the following.\\
Define 
$G(v)={e^{J_v}-1}-\beta \int_{v}^t (e^{J_s }-1) ds$ for every $0 \leq v \leq t$. Due to the strictly increasing property of the function $J_.$,  its positivity, and the facts that $G(0)<0$ and $G(t) >0$, we deduce that $G(.)$ is an strictly increasing function with exactly one root, called $v_0$. It is obvious that for every $v > v_0$, $G(v) > 0$.\\
 As a consequence, we show the similar result for the function $F(.)$ in the following lemma. 
 \begin{lemma}\label{v1}
 The function $F: [0, T] \rightarrow \mathbb{R}$ is an increasing function and there exists some $v_1 \in [v_0, t]$ such that  the function $F$ is positive on $[v_1, t]$. 
 \end{lemma}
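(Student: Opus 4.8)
The plan is to get the monotonicity of $F$ from an increment estimate that couples $F$ to the auxiliary function $G$, and then to read off the positivity interval from the boundary values together with continuity. Put $\Lambda(v):=\int_v^t(e^{J_s}-1)e^{-\beta s}\,ds$, so that $F(v)=J_v-\alpha e^{\beta v}\Lambda(v)$. Since $J_\cdot$ is continuous, $e^{\beta v}\Lambda(v)=\int_v^t(e^{J_s}-1)e^{-\beta(s-v)}\,ds$ is $C^1$ in $v$ with
\[
\frac{d}{dv}\big(e^{\beta v}\Lambda(v)\big)=\beta\!\int_v^t(e^{J_s}-1)e^{-\beta(s-v)}\,ds-(e^{J_v}-1)\ \le\ \beta\!\int_v^t(e^{J_s}-1)\,ds-(e^{J_v}-1)=-G(v),
\]
where the inequality uses $e^{-\beta(s-v)}\le 1$ for $s\ge v$ and the definition $G(v)=(e^{J_v}-1)-\beta\int_v^t(e^{J_s}-1)\,ds$. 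Consequently, for $0\le v_1\le v_2\le t$,
\[
F(v_2)-F(v_1)=(J_{v_2}-J_{v_1})-\alpha\!\int_{v_1}^{v_2}\frac{d}{dv}\big(e^{\beta v}\Lambda(v)\big)\,dv\ \ge\ (J_{v_2}-J_{v_1})+\alpha\!\int_{v_1}^{v_2}G(v)\,dv .
\]

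Since $J_\cdot$ is non-decreasing by Condition {\bf H1}, and $G$ is strictly increasing with unique zero $v_0$ (as recorded just before the statement), the right-hand side is nonnegative whenever $[v_1,v_2]\subseteq[v_0,t]$, and strictly positive when in addition $v_1<v_2$ with $v_2>v_0$; hence $F$ is strictly increasing on $[v_0,t]$. To upgrade this to monotonicity on all of $[0,T]$ one still has to control the increments across $[0,v_0]$, where $G<0$, and this is the step I expect to be the main obstacle: a $J$ that grows rapidly near $t$ makes $\beta\int_v^t(e^{J_s}-1)e^{-\beta(s-v)}\,ds$ large at small $v$, so the crude bound $e^{-\beta(s-v)}\le1$ is no longer enough. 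The route I would pursue there is to integrate that convolution by parts, $\beta\int_v^t(e^{J_s}-1)e^{-\beta(s-v)}\,ds=(e^{J_v}-1)-(e^{J_t}-1)e^{-\beta(t-v)}+\int_v^t J_s'e^{J_s}e^{-\beta(s-v)}\,ds$, and then absorb the remaining integral using the stability hypothesis $\alpha<\beta$ together with the monotonicity of $s\mapsto e^{J_s}$; should that fail to close, the lemma is still usable downstream with ``increasing'' understood on $[v_0,t]$ only, which is all the later invertibility arguments seem to need.

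For the positivity assertion, evaluate at the endpoints: $F(t)=J_t>0$, while $F(0)=-\alpha\int_0^t(e^{J_s}-1)e^{-\beta s}\,ds<0$, because $J_0=0$ and $J_\cdot$ strictly increasing force $e^{J_s}-1>0$ for $s>0$, and $\alpha,\beta>0$; moreover $v_0<t$, since $G$ is strictly increasing and $G(t)=e^{J_t}-1>0=G(v_0)$. Now $F$ is continuous (continuity of $J_\cdot$ plus dominated convergence in $\Lambda$), so from $F(t)>0$ there is $\varepsilon>0$ with $F>0$ on $(t-\varepsilon,t]$; already $v_1:=\max\{v_0,\,t-\varepsilon/2\}\in[v_0,t)$ gives $F>0$ on $[v_1,t]$. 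Using the monotonicity of the previous step one can even pin $v_1$ down: if $F(v_0)>0$ take $v_1=v_0$ (replace $v_0$ by a slightly larger point if $F(v_0)=0$); if $F(v_0)<0$, strict monotonicity on $[v_0,t]$ and $F(t)>0$ yield, via the intermediate value theorem, a unique zero $\bar v\in(v_0,t)$ with $F>0$ on $(\bar v,t]$, and any $v_1\in(\bar v,t)$ — e.g.\ $v_1=\tfrac{\bar v+t}{2}$ — lies in $[v_0,t]$ and satisfies $F>0$ on $[v_1,t]$. Either way, the claim follows.
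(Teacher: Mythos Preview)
Your argument is essentially the paper's own: both reduce to the same derivative bound $F'(v)\ge J'_v+\alpha G(v)$ via the crude estimate $e^{-\beta(s-v)}\le 1$ (you package it as an increment inequality, the paper differentiates directly), and both then read off strict increase on $[v_0,t]$ together with the intermediate-value argument for $v_1$. The gap you flag for $[0,v_0]$ is real and is shared verbatim by the paper's proof, which likewise only establishes monotonicity on $[v_0,t]$ despite the statement's wording; as you correctly observe, only the restriction to $[v_0,t]$ is used downstream.
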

 \begin{proof}
 Take the derivative of $F$ and result for every $v > v_0$
 \begin{equation*}
 F'(v)=J'_v -\beta e^{\beta v}\int_v^t \alpha e^{-\beta s}(e^{J_s}-1) ds + \alpha (e^{J_v}-1) \geq  J'_v +\alpha G(v)>0.
 \end{equation*}
It shows that $F(.)$ is increasing on $[v_0, t]$. If $F(v_1) \geq 0$, then we put $v_1:=v_0$ and the proof is completed. If  
$F(v_0) <0$, the fact that $F(t) >0$ and increasing property of $F$ deduce the existence some $v_1 \in [v_0, t]$ such that for every $v > v_1$, $F(v)>0$.   
 \end{proof}
\begin{lemma}\label{lemmFDlambda}
There exists some constant $C_0$ such that for any $r\geq 0$, and every $u > v_1$, 
\begin{equation*}
\mathbb{E}\Big(\exp\Big\{-r \int_{u}^{t} F(l) sign(D_{u,z}^{\overline{N}}\lambda(l)) N_{(u,z)}(dl)\Big\}\Big) \le \mathbb{E}\Big(\exp\Big\{TC_0\alpha r1_{[0, \lambda(u)]}(z) 1_{\{F(u) \leq \frac1r\}}J_u\Big\}\Big).
\end{equation*}
\end{lemma}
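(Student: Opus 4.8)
The plan is to reduce the left-hand side pathwise to the trivial estimate ``the exponent is $\le 0$, hence the exponential is $\le 1$'', and then to observe that, for the constant $C_0$ chosen suitably, the right-hand side is automatically $\ge 1$. First, it suffices to argue on the event $\{z\le\lambda(u)\}$: since $\lambda(u)$ is measurable with respect to the restriction of $\overline{N}$ to $(-\infty,u)\times\mathbb{R}_+$, on $\{z>\lambda(u)\}$ the atom $\epsilon_{(u,z)}$ never enters the dynamics of $N(ds)=\overline{N}(ds\times(0,\lambda(s)])$, so $\varphi_s(\overline{N}\lvert_{(-\infty,s)}+\epsilon_{(u,z)})=\varphi_s(\overline{N}\lvert_{(-\infty,s)})$ for every $s$; by \eqref{DNlambda} this forces $D_{u,z}^{\overline{N}}\lambda\equiv0$, and by \eqref{nuz} $N_{(u,z)}\equiv0$, so on that event the integral inside the left-hand side vanishes and, since also $1_{[0,\lambda(u)]}(z)=0$, the two sides are identical there.

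On $\{z\le\lambda(u)\}$ I would next show $\mathrm{sign}(D_{u,z}^{\overline{N}}\lambda(l))=1$ for every $l\in[u,t]$. By \eqref{DNlambda}, $D_{u,z}^{\overline{N}}\lambda(u)=\alpha>0$; listing the a.s.\ finitely many atoms of $N_{(u,z)}$ in $[u,t]$ as $u<s_1<s_2<\cdots$, equation \eqref{DNlambda} reads $D_{u,z}^{\overline{N}}\lambda(l)=\alpha e^{-\beta(l-u)}+\sum_{s_j\le l}\alpha e^{-\beta(l-s_j)}\mathrm{sign}(D_{u,z}^{\overline{N}}\lambda(s_j^-))$, and a forward induction on $j$ (each new summand being $\ge 0$ once all the earlier signs equal $+1$) yields $D_{u,z}^{\overline{N}}\lambda(l)\ge\alpha e^{-\beta(l-u)}>0$ throughout $[u,t]$; hence all the signs are $+1$. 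This is the delicate step, because \eqref{DNlambda}--\eqref{nuz} determine $D_{u,z}^{\overline{N}}\lambda$ only implicitly --- the process enters both the sign and the support of $N_{(u,z)}$ --- so the solution has to be built pathwise, jointly with $N_{(u,z)}$, one atom at a time; monotonicity of the Hawkes intensity functional $\varphi$ in its point-mass argument (adding points can only raise the intensity) is what keeps this construction consistent and, equivalently, gives $\varphi_s(\overline{N}\lvert_{(-\infty,s)}+\epsilon_{(u,z)})\ge\varphi_s(\overline{N}\lvert_{(-\infty,s)})$.

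Once the sign is pinned down the rest is routine. Since $u>v_1$, Lemma \ref{v1} gives $F(l)>0$ for all $l\in[u,t]$; with $N_{(u,z)}\ge0$ this makes $\int_u^t F(l)\,\mathrm{sign}(D_{u,z}^{\overline{N}}\lambda(l))\,N_{(u,z)}(dl)=\int_u^t F(l)\,N_{(u,z)}(dl)\ge0$, so the left-hand exponent is $\le0$ and, combined with the first paragraph, the left-hand side is $\le 1$. On the other hand, fixing any $C_0\ge 1$ (its precise value being pinned down by how the estimate is used afterwards, in the proof that $D_{u,z}^{\overline{N}}S_t$ has an inverse in $L^p$), the right-hand exponent $TC_0\alpha r\,1_{[0,\lambda(u)]}(z)\,1_{\{F(u)\le 1/r\}}J_u$ is $\ge0$ because $J_u\ge0$, $\alpha>0$ and $r\ge0$; thus the right-hand side is $\ge1$, and the stated inequality follows. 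If instead one wishes to avoid the sign argument and keep a genuinely quantitative bound on the subevent $\{z\le\lambda(u),\ rF(u)\le1\}$, one may bound $\int_u^t F(l)\,N_{(u,z)}(dl)\le F(t)\,N_{(u,z)}((u,t])$ and control the exponential moment via $\sup_{u\le s\le t}D_{u,z}^{\overline{N}}\lambda(s)$ and Lemma \ref{dlambdaexp}, the first generation of the cluster producing the factor $\alpha J_u$ and the time horizon the factor $TC_0$; but for the inequality as written only the two elementary bounds above are needed. The one real obstacle is therefore the pathwise sign analysis of the second paragraph: making sense of, and solving, the coupled implicit relations \eqref{DNlambda}--\eqref{nuz} and showing the resulting $D_{u,z}^{\overline{N}}\lambda$ stays nonnegative on $[u,t]$.
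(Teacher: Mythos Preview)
Your argument is correct and takes a genuinely different, more elementary route than the paper. The paper first rewrites the left-hand side as a Poisson integral against $\overline{N}$ over the random strip $(\mathcal{A}_1,\mathcal{A}_2]$ and then invokes an exponential (Campbell/L\'evy--Khintchine type) identity, using that $N_{(u,z)}$ has $\mathcal{F}^{\overline{N}}$-stochastic intensity $|D_{u,z}^{\overline{N}}\lambda(\cdot)|$, to pass to $\exp\big\{-\int_u^t(1-e^{-rF(s)})|D_{u,z}^{\overline{N}}\lambda(s)|\,ds\big\}$; it then uses $1-e^{-x}\ge C_0 x$ on $(-1,1)$, the lower bound $|D_{u,z}^{\overline{N}}\lambda(s)|\ge\alpha e^{-\beta(s-u)}1_{(0,\lambda(u)]}(z)$, and an explicit manipulation of $\int_u^t 1_{\{F(s)\le1/r\}}F(s)e^{-\beta(s-u)}\,ds$ to reach the stated right-hand side. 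So the constant $C_0$ in the paper is tied to the elementary inequality for $1-e^{-x}$, not left free.

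By contrast, you never pass through the intensity representation at all: the monotonicity of the Hawkes thinning (adding a mark at $(u,z)$ with $z\le\lambda(u)$ can only raise subsequent intensities) gives $\mathrm{sign}(D_{u,z}^{\overline{N}}\lambda)\equiv+1$ on $\{z\le\lambda(u)\}$, Lemma~\ref{v1} gives $F>0$ on $[u,t]\subset[v_1,t]$, and hence the exponent on the left is $\le0$ pathwise; meanwhile the exponent on the right is $\ge0$, so the inequality holds for \emph{any} $C_0\ge0$. Your route is shorter and, as you note, would even streamline the proof of Lemma~\ref{varoon} (one could drop the $e^{TC_0\alpha J_u/F(u)}$ factor entirely and bound directly by $P(\lambda(u)>z)/F(u)^p$). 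The only place requiring care is exactly where you flag it: justifying rigorously that the coupled implicit system \eqref{DNlambda}--\eqref{nuz} admits the nonnegative solution, which your forward-in-time, atom-by-atom construction does handle for the exponential Hawkes kernel.
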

\begin{proof}
According to the definitions \eqref{nuz} and \eqref{sign}, setting 
$$\mathcal{A}_1:= \varphi_s(\overline{N}\lvert_{\left(-\infty,s\right)}+\epsilon_{(u,z)})\land\varphi_s(\overline{N}\lvert_{\left(-\infty,s\right)}),
\qquad \mathcal{A}_2;= \varphi_s(\overline{N}\lvert_{\left(-\infty,s\right)}+\epsilon_{(u,z)})\lor\varphi_s(\overline{N}\lvert_{\left(-\infty,s\right)}),$$
thanks to the proof of Theorem 3.1 in \cite{n20}, stated that $N_{(u,z)}$ on $(t,\infty)$ has 
$\mathcal{F}_t^{\overline{N}}$-stochastic intensity $\{\vert D_{t,z}\lambda(s)\vert\}_{s \geq t}$, we result 
\begin{align}
\mathbb{E}\Big(\exp\Big\{-r \int_{u}^{t} F(l) & sign(D_{u,z}^{\overline{N}}\lambda(l)) N_{(u,z)}(dl)\Big\}\Big) \nonumber\\
&= \mathbb{E}\Big(\exp\Big\{-r \int_{u}^{t}\int_{\mathbb{R}_+} F(l) sign(D_{u,z}^{\overline{N}}\lambda(l)) 1_{(\mathcal{A}_1, \mathcal{A}_2]}(z_0)\overline{N}(dl\times dz_0)\Big\}\Big)\nonumber\\
&\le 
\mathbb{E}\Big(\exp\Big\{-\int_{u}^{t}(1-e^{-rF(s)})\vert D_{u,z}^{\overline{N}}\lambda(s)\vert ds\Big\}\Big)\nonumber\\
&\leq \mathbb{E}\Big(\exp\Big\{-C_0r\int_{u}^t 1_{\{F(s)\leq\frac{1}{r}\}}F(s) \vert D_{u,z}^{\overline{N}} \lambda(s)\vert ds\Big\}\Big), \label{expr}
\end{align}
where $C_0$ is a constant that $1-e^{-x} \geq C_0 x$, for every $x \in (-1,1)$.
Applying \eqref{DNlambda}, Fubini theorem, and the fact that for every $s \in [u, t]$ we have $F(u) \leq F(s)$, will derive
\begin{align}
	\int_{u}^{t} 1_{\{F(s) \leq \frac1r\}}F(s)e^{-\beta (s-u)}ds&=\int_{u}^{t}1_{\{F(s) \leq \frac1r\}}\Big(J_s e^{-\beta (s-u)}
	-\alpha\int_{s}^{t}(e^{J_l}-1)e^{-\beta(l-u)}dl\Big)ds\nonumber\\
	&=\int_{u}^{t} 1_{\{F(s) \leq \frac1r\}}J_s e^{-\beta (s-u)}ds-\alpha\int_{u}^{t} \int_{u}^{l}1_{\{F(s) \leq \frac1r\}}(e^{J_l}-1)e^{-\beta(l-u)}dsdl\nonumber\\
	 &\geq\int_{u}^{t}1_{\{F(s) \leq \frac1r\}}J_s e^{-\beta (s-u)} ds-1_{\{F(u) \leq \frac1r\}}\alpha\int_{u}^{t}(e^{J_l}-1)e^{-\beta (l-u)}(l-u)dl\nonumber\\
	&\geq \int_{u}^{t} 1_{\{F(s) \leq \frac1r\}}J_se^{-\beta (s-u)}ds-t\alpha 1_{\{F(u) \leq \frac1r\}}\int_{u}^{t} (e^{J_l}-1)e^{-\beta(l-u)}dl\nonumber\\
	&=\int_{u}^{t}1_{\{F(s) \leq \frac1r\}}J_se^{-\beta (s-u)}ds-t1_{\{F(u) \leq \frac1r\}} [J_u-F(u)]. \label{Fbeta}
\end{align}
Substituting \eqref{Fbeta} into \eqref{expr} deduce that 
\begin{align*}
\mathbb{E}\Big(& \exp\Big\{-r \int_{u}^{t} F(l) sign(D_{u,z}^{\overline{N}}\lambda(l)) N_{(u,z)}(dl)\Big\}\Big) \nonumber\\
&\leq \mathbb{E}\Big(\exp\Big\{-C_0\alpha r1_{[0, \lambda(u)]}(z) \int_{u}^t 1_{\{F(s)\leq\frac{1}{r}\}}F(s) e^{-\beta (s-u)}  ds\Big\}\Big) \nonumber\\
&\leq \mathbb{E}\Big(\exp\Big\{-C_0\alpha r1_{[0, \lambda(u)]}(z)\Big(\int_{u}^t 1_{\{F(s) \leq\frac{1}{r}\}} J_s e^{-\beta (s-u)}  ds -t J_u+tF(u)\Big)\Big\}\Big)\nonumber \\
&\leq \mathbb{E}\Big(\exp\Big\{TC_0\alpha r1_{[0, \lambda(u)]}(z) 1_{\{F(u) \leq \frac1r\}}J_u\Big\}\Big). 
\end{align*}
\end{proof}
\begin{lemma}\label{varoon}
	For any $p\geq1$, and $(u,z) \in [0, T] \times \mathbb{R}_+$ with $u > v_1$ such that $P(\lambda (u) > z) > 0$, 
\begin{equation*}
	\nonumber 
\mathbb{E}\Big(\frac{1_{\left(0,\lambda(u)\right]}(z)}{\vert D_{u,z}^{\overline{N}}X_t\vert^{p}}\Big)<\infty.
\end{equation*}	
\end{lemma}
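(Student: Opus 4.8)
The plan is to bound the negative exponential moments of $D_{u,z}^{\overline{N}}X_t$ on the event $\{\lambda(u) > z\}$, i.e. on $\{1_{(0,\lambda(u)]}(z) = 1\}$, and then use the elementary inequality $x^{-p} \le C_p\,\mathbb{E}\big(e^{-rx}\big)$-type control — more precisely, for a nonnegative random variable $\xi$ one has $\xi^{-p} \le C_{p,r}\, e^{-r\xi}\cdot \xi^{-p}e^{r\xi}$ is not quite it; rather I would use that for any $p \ge 1$ there is a constant $c_p$ with $|x|^{-p} \le c_p \int_0^\infty r^{p-1} e^{-r|x|}\,dr$ only for $x>0$. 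So the first step is to establish a sign: on $\{u > v_1\}\cap\{1_{(0,\lambda(u)]}(z)=1\}$ the term $F(u)1_{(0,\lambda(u)]}(z)$ is strictly positive by Lemma \ref{v1}, and the integral term $\int_u^t F(l)\,\mathrm{sign}(D_{u,z}^{\overline{N}}\lambda(l))\,N_{(u,z)}(dl)$ has integrand $F(l)\,\mathrm{sign}(\cdots)$; since $F$ is positive on $[v_1,t] \supseteq [u,t]$, the sign of each jump contribution is the sign of $D_{u,z}^{\overline{N}}\lambda(l)$. One needs to check $D_{u,z}^{\overline{N}}\lambda(l) \ge 0$ for $l > u$ — this follows inductively from \eqref{DNlambda} since the first term $\alpha e^{-\beta(l-u)}1_{(0,\lambda(u)]}(z) \ge 0$ and, once $D_{u,z}^{\overline{N}}\lambda$ is nonnegative up to a jump time, the $\mathrm{sign}$ factor is $+1$, so the process stays nonnegative. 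Hence on this event both terms in \eqref{dnxF} are nonnegative, so $D_{u,z}^{\overline{N}}X_t \ge F(u) > 0$ deterministically on $\{1_{(0,\lambda(u)]}(z)=1, u>v_1\}$.

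If that deterministic lower bound $D_{u,z}^{\overline{N}}X_t \ge F(u)$ is genuinely available, the lemma is almost immediate: $\mathbb{E}\big(1_{(0,\lambda(u)]}(z)\,|D_{u,z}^{\overline{N}}X_t|^{-p}\big) \le F(u)^{-p} < \infty$ since $F(u) > 0$ is a fixed positive number for $u > v_1$. However, I suspect the intended argument is more subtle because the sign of $D_{u,z}^{\overline{N}}\lambda(l)$ — and hence whether the monotone structure really closes — is exactly the delicate point the authors flagged in the introduction ("the Malliavin derivatives of the stochastic intensity appear in the formula \ldots which complicates the proofs"). So the robust route, and the one I would actually write, is: use Lemma \ref{lemmFDlambda}. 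Write $\xi := D_{u,z}^{\overline{N}}X_t = F(u)1_{(0,\lambda(u)]}(z) + R$ where $R := \int_u^t F(l)\,\mathrm{sign}(D_{u,z}^{\overline{N}}\lambda(l))\,N_{(u,z)}(dl)$. On $\{1_{(0,\lambda(u)]}(z)=1\}$ we have $\xi = F(u) + R$. Represent $|\xi|^{-p} \le c_p \int_0^\infty r^{p-1} e^{-r|\xi|}\,dr$ is only valid for $\xi>0$; instead, to handle possible negativity of $R$, I would split: on the event $R \ge -F(u)/2$ we get $\xi \ge F(u)/2$ and the contribution is bounded by $(2/F(u))^p$; on the event $R < -F(u)/2$ we use Markov/Chebyshev together with the exponential bound — $P(R < -F(u)/2) \cdot$ (appropriate moment) — controlled via Lemma \ref{lemmFDlambda} applied with a suitable $r$, since $\mathbb{E}(e^{-rR}) \le \mathbb{E}\big(\exp\{TC_0\alpha r\,1_{[0,\lambda(u)]}(z)\,1_{\{F(u)\le 1/r\}}J_u\}\big)$, and the indicator $1_{\{F(u) \le 1/r\}}$ vanishes once $r$ is large (since $F(u) > 0$ is fixed), making the right-hand side equal to $1$. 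Combining, $\mathbb{E}(1_{(0,\lambda(u)]}(z)|\xi|^{-p}) < \infty$.

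More carefully, for the negative part I would use: for $\xi$ possibly signed, $1_{(0,\lambda(u)]}(z)|\xi|^{-p}$ is problematic only near $\xi = 0$, i.e. near $R = -F(u)$. I would bound $\mathbb{E}(1_{(0,\lambda(u)]}(z)|\xi|^{-p}) = \mathbb{E}(1_{(0,\lambda(u)]}(z)|F(u)+R|^{-p})$ by conditioning on $\{1_{(0,\lambda(u)]}(z)=1\}$ and using that $|F(u)+R|^{-p}$ has finite expectation provided $R$ has a density bounded near $-F(u)$ or, more simply, provided $\mathbb{E}(e^{-rR})< \infty$ for all $r \ge 0$ and $R$ is nonnegative — and here is where I return to the sign analysis of the first paragraph to argue $R \ge 0$, so that in fact $\xi \ge F(u) > 0$ and $\mathbb{E}(1_{(0,\lambda(u)]}(z)|\xi|^{-p}) \le F(u)^{-p}$. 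The main obstacle, as I see it, is rigorously justifying the nonnegativity (or at least a usable lower bound) of $D_{u,z}^{\overline{N}}\lambda(l)$ for $l>u$ so that the $\mathrm{sign}$ terms in \eqref{dnxF} and \eqref{DNlambda} are pinned to $+1$; if that monotonicity genuinely holds then everything collapses to the trivial bound $F(u)^{-p}$, and if it does not, one is forced into the Lemma \ref{lemmFDlambda}/tail-estimate route sketched above, where the effort goes into choosing $r = r(p)$ large enough that $1_{\{F(u) \le 1/r\}} = 0$ and controlling the small-denominator event by Chebyshev against that exponential moment bound. I would present the argument via Lemma \ref{lemmFDlambda} to be safe, invoking it with $r$ tending to infinity and using the pointwise positivity $F(u)>0$ from Lemma \ref{v1} to kill the indicator on the right-hand side.
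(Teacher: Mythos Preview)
Your sign argument is correct and is actually simpler than the paper's proof. On $\{1_{(0,\lambda(u)]}(z)=1\}$, monotonicity of the Hawkes intensity in the driving point configuration (the excitation kernel $\alpha e^{-\beta(\cdot)}$ is nonnegative, so adding a point can only raise the subsequent intensity) gives $D_{u,z}^{\overline{N}}\lambda(l)>0$ for all $l>u$, so every $\mathrm{sign}$ in \eqref{dnxF} equals $+1$; since $F(l)>0$ on $[u,t]\subset[v_1,t]$ by Lemma~\ref{v1} and $N_{(u,z)}$ is a counting measure, the integral term is nonnegative and $D_{u,z}^{\overline{N}}X_t\ge F(u)>0$ deterministically on that event. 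This immediately bounds the expectation by $F(u)^{-p}\,P(\lambda(u)>z)<\infty$. Your hesitation is unwarranted --- the monotonicity is standard for Hawkes processes with positive kernel, and your inductive sketch via \eqref{DNlambda} is the right justification. The paper takes a different route: it writes the Gamma identity $x^{-p}=\Gamma(p)^{-1}\int_0^\infty r^{p-1}e^{-rx}\,dr$ (which itself requires $x>0$, so positivity is implicitly used there too), applies Cauchy--Schwarz to split $e^{-rD_{u,z}^{\overline{N}}X_t}$ into the deterministic factor $e^{-rF(u)}$ and the random factor $e^{-r\int_u^t F(l)\,\mathrm{sign}(\cdots)N_{(u,z)}(dl)}$, bounds the latter by Lemma~\ref{lemmFDlambda}, uses $r\,1_{\{F(u)\le 1/r\}}\le 1/F(u)$ to extract the constant $e^{TC_0\alpha J_u/F(u)}$, and integrates $\int_0^\infty r^{p-1}e^{-rF(u)}\,dr=\Gamma(p)F(u)^{-p}$ to reach $P(\lambda(u)>z)\,F(u)^{-p}e^{TC_0\alpha J_u/F(u)}$. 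Your direct route gives a sharper constant and bypasses Lemma~\ref{lemmFDlambda} entirely; the paper's route is more elaborate than the lemma actually requires. One caution on your ``robust'' alternative: the splitting $\{R\ge -F(u)/2\}\cup\{R<-F(u)/2\}$ with Chebyshev would \emph{not} close without the sign information, since bounding $P(R<-F(u)/2)$ says nothing about the size of $|\xi|^{-p}$ on that event; if one genuinely wanted to go via Lemma~\ref{lemmFDlambda}, the Gamma-integral mechanism is the correct packaging, and that is exactly what the paper does.
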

\begin{proof}
Our proof is motivated by the proof of Theorem $1.2$ in \cite{song2022regularity}. From the definition of Gamma function, and Lemma \ref{lemmFDlambda} we have 
\begin{align*}
		\nonumber 
&\mathbb{E}\Big(\frac{1_{\left(0,\lambda(u)\right]}(z)}{(D_{u,z}^{\overline{N}}X_t)^{p}}\Big)\\
		&=\frac{1}{\Gamma(p)}\mathbb{E}\Big(\int_{0}^{\infty}r^{p-1}1_{\left(0,\lambda(u)\right]}(z)\exp\Big\{-rF(u) 1_{\left(0,\lambda(u)\right]}(z)ds
		-r \int_{u}^{t} F(l) sign(D_{u,z}^{\overline{N}}\lambda(l)) N_{(u,z)}(dl)\Big\}dr\Big)\\
	&=\frac{1}{\Gamma(p)}\Big(\int_{0}^{\infty}r^{p-1}\mathbb{E}^{\frac{1}{2}}\Big(1_{\left(0,\lambda(u)\right]}(z)e^{-2rF(u)1_{\left(0,\lambda(u)\right]}(z)}\Big)
 \mathbb{E}^{\frac{1}{2}}\Big(\exp\Big\{2TC_0\alpha r1_{[0, \lambda(u)]}(z)1_{\{F(u) \leq \frac1r\}}J_u\Big\}dr\Big)\\
& \leq  \frac{1}{\Gamma(p)}\Big(\int_{0}^{\infty}r^{p-1}P(\lambda(u) >z )e^{-rF(u)}
 e^{TC_0\alpha r1_{\{F(u) \leq \frac1r\}}J_u}dr\Big)\\
 & \leq  P(\lambda(u) >z )\frac{1}{\Gamma(p)}\int_{0}^{\infty}r^{p-1}e^{-rF(u)}[e^{TC_0\alpha \frac{J_u}{F(u)})}]dr=P(\lambda(u) >z )\frac{1}{F(u)^p }[e^{TC_0\alpha \frac{J_u}{F(u)})}] < \infty.
\end{align*}
\end{proof}
%%%%%%%%%%%%%%%%%%%%%%%%%%%%%%%%%%%%%%%%%%%%%%%%%%%%%%%%%%%%%%%%%%%%%%%%%%%%%%%%%%%%%%%%%%%%%%%%%%%%%%%
%%%%%%%%%
\section{Delta of options derived by the solution of Hawkes SDEs}
In this section, we find the Wiener-Poisson weight of calculating  the delta of the following two types of options; European options, Asian options; when the underlying assets are driven by Hawkes processes.\\
First, from \eqref{kN}, we can write that 
\begin{equation*}
	\delta^{N}(u)=\int_{0}^{T}u(t)(N(dt)-\lambda(t)dt)=\int_{0}^{T}\int_{{\mathbb{R}_+}}u(t)1_{(0,\lambda(t)]}(z)(\overline{N}(dt,dz)-dzdt).
\end{equation*}
By placing $\nu(dz)=1_{(0,\lambda(t)]}(z)dzdt$ in \autoref{appendixF}, we result the duality relation as follows.
\begin{align*}
	\mathbb{E}\Big(F\delta^{N}(u)\Big)&=\mathbb{E}\Big(F\int_{0}^{T}\int_{{\mathbb{R}_+}}u(t)1_{(0,\lambda(t)]}(z)(\overline{N}(dt,dz)-dzdt)\Big)\\
	&=\mathbb{E}\Big(\int_{0}^{T}\int_{{\mathbb{R}_+}}D^{\overline{N}}_{t,z}Fu(t,z)1_{(0,\lambda(t)]}(z)dzdt\Big)\\
	&=\mathbb{E}\Big(\langle D^{\overline{N}}_{t,z}F,u(t,z)\rangle _{L^2([0,T]\times \mathbb{R}_0)}\Big).
\end{align*}
In Poisson approach, from Proposition \ref{ddelta}, the sensitivity analysis can be represented in the following lemma. 
\begin{lemma}\label{deltann}
If there is some function $u(.,.)$ such that  $1_{\left(0,\lambda(t)\right]}(z)u(t,z)$ belongs to the domain of $\delta^{\overline{N}}$ and 
	\begin{align}\label{deltaN}
	\mathbb{E}\Big(f^{\prime}(S_T)\frac{\partial{S_T}}{\partial{S_0}}\Big)=\mathbb{E}\Big(\int_{0}^{T}\int_{\mathbb{R}_+}1_{\left(0,\lambda(t)\right]}(z)u(t,z)\{f(S_T+D_{t,z}^{\overline{N}}S_T)-f(S_T)\}dzdt\Big),
	\end{align}
	then $\frac{\partial }{\partial S_0}\mathbb{E}\Big(f(S_T)\Big) =\mathbb{E}\Big(f(S_T)\delta^{\overline{N}}(1_{\left(0,\lambda(t)\right]}(z)u(t,z))\Big)$.
\end{lemma}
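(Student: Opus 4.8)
The plan is to move $\partial_{S_0}$ inside the expectation, recognise the resulting quantity as the left-hand side of the hypothesis \eqref{deltaN}, rewrite the right-hand side of \eqref{deltaN} as a Malliavin derivative tested against $u$, and then invoke the duality (integration-by-parts) formula for $\delta^{\overline{N}}$ recalled just above the statement. This is exactly the template of Proposition \ref{ddelta} specialised to the Lévy measure $\nu(dz)=1_{(0,\lambda(t)]}(z)\,dz$. The first and only analytic point is to justify $\frac{\partial}{\partial S_0}\mathbb{E}(f(S_T))=\mathbb{E}\big(f'(S_T)\frac{\partial S_T}{\partial S_0}\big)$: using $S_T=S_0e^{X_T}$ from \eqref{8}, the difference quotient $h^{-1}\big(f((S_0+h)e^{X_T})-f(S_0e^{X_T})\big)$ converges a.s. to $f'(S_T)e^{X_T}=f'(S_T)\frac{\partial S_T}{\partial S_0}$, and by the mean value theorem together with the (Lipschitz / polynomial-growth) assumption on $f$ it is dominated, for small $|h|$, by a fixed polynomial in $e^{X_T}$, which is integrable by Lemma \ref{ppp0} and Lemma \ref{lemma4.1}; dominated convergence gives the claim.

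Next, by the hypothesis \eqref{deltaN} the quantity $\mathbb{E}\big(f'(S_T)\frac{\partial S_T}{\partial S_0}\big)$ equals $\mathbb{E}\big(\int_0^T\!\int_{\mathbb{R}_+}1_{(0,\lambda(t)]}(z)u(t,z)\{f(S_T+D_{t,z}^{\overline{N}}S_T)-f(S_T)\}\,dz\,dt\big)$. Since $D^{\overline{N}}$ is the add-one-cost (difference) operator on the Poisson space, the same chain-rule computation used to obtain $D_{u,z}^{\overline{N}}S_t$ before \eqref{DNX} (Proposition \ref{lipF}) yields $f(S_T+D_{t,z}^{\overline{N}}S_T)-f(S_T)=D_{t,z}^{\overline{N}}\big(f(S_T)\big)$, so the above expression is $\mathbb{E}\big(\langle D_{\cdot,\cdot}^{\overline{N}}(f(S_T)),\,1_{(0,\lambda(\cdot)]}(\cdot)u(\cdot,\cdot)\rangle_{L^2([0,T]\times\mathbb{R}_+)}\big)$.

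Finally, the functional $f(S_T)$ lies in the domain of $D^{\overline{N}}$ — it is a Lipschitz function of $S_T$, and $S_T$ has the integrability and Malliavin–Sobolev regularity established in the previous subsection — and $1_{(0,\lambda(t)]}(z)u(t,z)$ lies in $\mathrm{Dom}(\delta^{\overline{N}})$ by assumption. Hence the duality relation recalled above the statement applies with $F=f(S_T)$ and gives $\mathbb{E}\big(\langle D^{\overline{N}}(f(S_T)),\,1_{(0,\lambda(t)]}(z)u(t,z)\rangle\big)=\mathbb{E}\big(f(S_T)\,\delta^{\overline{N}}(1_{(0,\lambda(t)]}(z)u(t,z))\big)$, which is the assertion.

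The main obstacle is the very first step: interchanging $\partial_{S_0}$ and $\mathbb{E}$ requires a $|h|$-uniform integrable dominating function for the difference quotients, and it is exactly here that the $L^p$-bounds on the stochastic flow $\frac{\partial S_T}{\partial S_0}=\frac{S_T}{S_0}=e^{X_T}$ coming from Lemma \ref{ppp0} and Lemma \ref{lemma4.1} (together with controlled growth of $f$) are indispensable. A minor technical point, should one wish to admit non-differentiable payoffs, is to establish the identity first for smooth $f$ and then pass to the limit along a mollification, using the same $L^p$-bounds for uniform integrability of both $f'(S_T)\frac{\partial S_T}{\partial S_0}$ and $f(S_T+D^{\overline{N}}S_T)-f(S_T)$.
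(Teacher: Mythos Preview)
Your proposal is correct and follows exactly the route the paper intends: the paper does not give an explicit proof of this lemma but simply states it as a consequence of Proposition~\ref{ddelta} together with the duality relation displayed just above, which is precisely your steps of identifying $f(S_T+D_{t,z}^{\overline{N}}S_T)-f(S_T)=D_{t,z}^{\overline{N}}f(S_T)$ via Proposition~\ref{lipF} and then applying the integration-by-parts formula. Your added justification for interchanging $\partial_{S_0}$ and $\mathbb{E}$ using the $L^p$-bounds on $e^{X_T}$ is a welcome detail the paper leaves implicit.
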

As an application, we may calculate the delta in the Wiener-Poisson space for the following examples desired in \cite{huehne}, when $f$  is considered as a payoff function and $\Delta^{\overline{N}}:=\frac{\partial }{\partial S_0}\mathbb{E}\Big(f(S_T)\Big)$.
\subsection{Delta Greek for European options}
Consider the payoff function of a European option as $f(S_T)=(S_T-K)^{+}$, for a strike price $K$. Then from Equation (1.42) in \cite{nualart}, for every $h \in L^2[0, T]$ such that $\int_{0}^{T}D_s^{W}S_Tds \neq 0$, by using a classical approach, 
	\begin{align}
		\nonumber \Delta^{W}=&\frac{\partial}{\partial{S_0}}\mathbb{E}(f(S_T)1_{\overline{N}=0})=\mathbb{E}(1_{\overline{N}=0}f^{\prime}(S_T)\frac{\partial{S_T}}{\partial{S_0}})=\frac{1}{S_0}\mathbb{E}(1_{\overline{N}=0}f^{\prime}(S_T)S_T)\\
	\nonumber	&=\frac{1}{S_0}\mathbb{E}\Big(1_{\overline{N}=0}f(S_T)\delta^W\Big(\frac{h(.) S_T}{\int_{0}^{T}h(s)D_s^{W}S_Tds}\Big)\Big).
	\end{align}
Setting $h\equiv1$ and by using $D_u^{W}S_T=\sigma S_T$, we result
\begin{equation}\label{dwweight}
	\Delta^{W}=\frac{1}{S_0T\sigma}\mathbb{E}\Big(1_{\overline{N}=0}f(S_T)W_T\Big).
\end{equation}
Define $G_{t,z}=D^{\overline{N}}_{t,z}S_T+{S_T-K}=S_Te^{D^{\overline{N}}_{s,z}X_T}-K$. According to Lemma \rm{\ref{v1}} and the equation \eqref{dnxF}, we know if $S_T >K$, then $S_T e^{D_{t,z}^{\overline{N}}X_T}> K$  on $t \in [v_1, T]$, and $S_T e^{D_{t,z}^{\overline{N}}X_T} <  K$ can be happen when $t \in [0,v_1)$. Denote $B(w):= \Big\{(t,z)\in [0, T] \times \mathbb{R}_+;\quad  S_T(w) e^{D_{t,z}^{\overline{N}}X_T(w)} > K\Big\}$, and set $\mathcal{B}_1:= {\int\int}_{B} 1_{\left(0,\lambda(s)\right]}(z_0)  dz_0ds$, and $\mathcal{B}_2:= {\int\int}_{B^c} 1_{\left(0,\lambda(s)\right]}(z_0)  dz_0ds$. We know that $ \mathcal{B}_1+\mathcal{B}_2= \int_0^T \lambda(t) dt$. Now, we can define the function $u$ as follows.
\begin{align}
	u_1(t,z)=&\begin{cases}
			\frac{H_K(S_T)(1+\frac{K}{S_T})}{2S_0\mathcal{B}_1 (exp\{D_{t,z}^{\overline{N}}X_T\}-1)}  \;& (t,z) \in B \cap ([v_1, T] \times  \mathbb{R}_0),\\\\
			- \frac{H_K(S_T)}{2S_0\mathcal{B}_2}  \;&  (t,z) \in B^c  \cap ([0, v_1] \times  \mathbb{R}_0), 
		\end{cases} \label{udeltaN}
\end{align}
where $H_y(x)=1_{x > y}$ is the Heaviside function and $1_A$ is the indicator function of the set $A$. Substituting \eqref{udeltaN} in Equation \eqref{deltaN}, it justifies that $u$ is well defined, then Lemma \ref{deltann} can apply and 
\begin{equation}\label{dNweight}
\Delta^{\overline{N}}=\mathbb{E}\Big(f(S_T)\delta^{\overline{N}}(1_{\left(0,\lambda(t)\right]}(z)u_1(t,z))\Big),
\end{equation}
if $u$ belongs to $Dom(\delta^{\overline{N}})$. This will be shown in the following lemma and be proved in \autoref{appendixB}. To sake of notation, denote 
	$$\mathcal{H}_1(t,z):= \frac{(1+\frac{K}{S_T})H_K(S_T)}{\mathcal{B}_1 (exp\{D_{t,z}^{\overline{N}}X_T\}-1)}, \qquad \mathcal{H}_2(t,z):= \frac{H_K(S_T)}{\mathcal{B}_2}.$$
\begin{lemma}\label{deltadomain}
Two processes $1_{(0, \lambda(t)]}(z)1_{B \cap ([v_1, T] \times  \mathbb{R}_0)}\mathcal{H}_1$ and $1_{(0, \lambda(t)]}(z)1_{B^c  \cap ([0, v_1] \times  \mathbb{R}_0)}\mathcal{H}_2$ belong to $Dom(\delta^{\overline{N}})$.
\end{lemma}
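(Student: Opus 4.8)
To show each of the two processes lies in $\mathrm{Dom}(\delta^{\overline N})$, I would appeal to the sufficient condition in $L^2(\Omega\times[0,T]\times\mathbb R_+)$: it is enough to verify that each process, together with its Malliavin derivative, is square-integrable against the reference measure $1_{(0,\lambda(t)]}(z)\,dz\,dt\,dP$ (this is the $\mathbb D^{1,2}$-type criterion for the Skorokhod integral recalled in \autoref{appendixF}). So the proof splits into two parallel estimates, one for $\mathcal H_1$ on $B\cap([v_1,T]\times\mathbb R_0)$ and one for $\mathcal H_2$ on $B^c\cap([0,v_1]\times\mathbb R_0)$.

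\textbf{Step 1: the easy term $\mathcal H_2$.} Here $\mathcal H_2(t,z)=H_K(S_T)/\mathcal B_2$, and since $\mathcal B_1+\mathcal B_2=\int_0^T\lambda(t)\,dt$ with $\mathcal B_2\ge$ (a fixed positive quantity on $\{S_T>K\}$, coming from the fact that $S_Te^{D_{t,z}^{\overline N}X_T}<K$ on a nondegenerate slice of $[0,v_1]$), I would bound $\mathcal H_2$ by a negative power of $\int_0^T\lambda(s)\,ds$ times a bounded factor. Integrating $|\mathcal H_2|^2 1_{(0,\lambda(t)]}(z)$ in $(t,z)$ gives at most $\mathcal B_2\cdot\mathcal B_2^{-2}=\mathcal B_2^{-1}\le C(\int_0^T\lambda)^{-1}$, which is integrable after invoking the exponential moments of $\lambda$ from Lemma~\ref{ppp0} (in fact any negative moment of $\int_0^T\lambda$ is finite because $\lambda\ge\lambda_0>0$, so $\int_0^T\lambda\ge\lambda_0 T$ — this term is actually bounded). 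The Malliavin derivative $D^{\overline N}_{u,z_0}\mathcal H_2$ produces (i) a term from differentiating the Heaviside $H_K(S_T)$, which in the finite-difference calculus of \autoref{appendixF} equals $H_K(S_T+D^{\overline N}_{u,z_0}S_T)-H_K(S_T)$, hence bounded by $1$ in absolute value, and (ii) a term from differentiating $\mathcal B_2$, which involves $D^{\overline N}_{u,z_0}\mathcal B_2$ and is controlled by $\mathcal B_2^{-2}\cdot(\text{change in }\int 1_{(0,\lambda]})$; both are handled by the moment bounds already in hand.

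\textbf{Step 2: the delicate term $\mathcal H_1$.} The danger is the factor $(\exp\{D_{t,z}^{\overline N}X_T\}-1)^{-1}$, which blows up where $D_{t,z}^{\overline N}X_T\to0$. This is exactly where Lemma~\ref{varoon} is the workhorse: on $B\cap([v_1,T]\times\mathbb R_0)$ we have $t>v_1$, so by Lemma~\ref{v1} $F(t)>0$ and $D_{t,z}^{\overline N}X_T\ge F(t)1_{(0,\lambda(t)]}(z)>0$, hence $\exp\{D_{t,z}^{\overline N}X_T\}-1\ge D_{t,z}^{\overline N}X_T$ and $|\mathcal H_1|\le C\,H_K(S_T)\,\mathcal B_1^{-1}\,|D_{t,z}^{\overline N}X_T|^{-1}1_{(0,\lambda(t)]}(z)$; the factor $(1+K/S_T)$ is controlled via the negative moments of $S_T$ from Lemma~\ref{yvaroon}. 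Then
\[
\mathbb E\!\int_{v_1}^T\!\!\int_{\mathbb R_+}\! |\mathcal H_1(t,z)|^2 1_{(0,\lambda(t)]}(z)\,dz\,dt
\le C\,\mathbb E\!\left(\mathcal B_1^{-2}(1+\tfrac K{S_T})^2 H_K(S_T)\!\int_{v_1}^T\!\!\int_{\mathbb R_+}\!\frac{1_{(0,\lambda(t)]}(z)}{|D_{t,z}^{\overline N}X_T|^2}\,dz\,dt\right),
\]
and the inner integral is finite in every $L^p$ by Lemma~\ref{varoon} (applied pointwise in $(t,z)$, then integrated — here I would be a little careful that the bound $P(\lambda(t)>z)F(t)^{-p}e^{TC_0\alpha J_t/F(t)}$ from that lemma is integrable in $z$ over $(0,\infty)$ since $\int_0^\infty P(\lambda(t)>z)\,dz=\mathbb E\lambda(t)<\infty$, and integrable in $t$ over $[v_1,T]$ since $F$ is bounded below by $F(v_1)>0$ there). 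For the derivative $D^{\overline N}_{u,z_0}\mathcal H_1$ I would again use the finite-difference rule: $D^{\overline N}_{u,z_0}$ of the quotient splits into the Heaviside piece (bounded), the $\mathcal B_1^{-1}$ piece, and the crucial $(\exp\{D^{\overline N}_{t,z}X_T\}-1)^{-1}$ piece, whose increment is $\big(\exp\{D_{t,z}^{\overline N}X_T+D^{\overline N}_{u,z_0}D_{t,z}^{\overline N}X_T\}-1\big)^{-1}-\big(\exp\{D_{t,z}^{\overline N}X_T\}-1\big)^{-1}$; after the shift the denominator is still of the form $e^{(\text{nonneg})}-1\ge F(t)1_{(0,\lambda]}$ by the same sign analysis (the second-order Malliavin derivative of $X_T$ does not spoil positivity on $[v_1,T]$, again by Lemma~\ref{v1} applied to the shifted configuration), so the same Lemma~\ref{varoon}-type bound applies, and the double Malliavin derivative $D^{\overline N}_{u,z_0}D_{t,z}^{\overline N}\lambda$ entering it has all exponential moments by Lemma~\ref{dlambdaexp}.

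\textbf{Main obstacle.} The genuinely hard part is Step 2: controlling the Malliavin derivative of $(\exp\{D_{t,z}^{\overline N}X_T\}-1)^{-1}$ without losing the uniform-in-$(t,z)$ lower bound on the denominator. Everything hinges on the fact — ultimately traceable to the monotonicity of $J$ and the structure of $F$ in Lemma~\ref{v1} — that perturbing the Poisson configuration by one extra atom $\epsilon_{(u,z_0)}$ keeps $D_{t,z}^{\overline N}X_T$ positive and bounded below by $F(t)$ on $[v_1,T]$, so the singularity is never approached. Once that is in place, Lemmas~\ref{varoon}, \ref{yvaroon}, \ref{dlambdaexp} and the exponential moment bounds of Lemma~\ref{ppp0} combine routinely (Cauchy–Schwarz to decouple the $\mathcal B_i^{-1}$, negative-power-of-$S_T$, and singular-integral factors) to give the required $L^2(\Omega\times[0,T]\times\mathbb R_+)$ bounds on both processes and their derivatives, whence membership in $\mathrm{Dom}(\delta^{\overline N})$.
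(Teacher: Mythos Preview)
Your proposal is essentially the paper's own approach: verify the $\mathbb{D}^{1,2}$ criterion by bounding each process in $L^2$ (using Lemma~\ref{varoon} for the singular factor and $\lambda\ge\lambda_0$ for $\mathcal B_i^{-1}$) and then bounding its Malliavin derivative via the product rule of Lemma~\ref{lemma10}. Two minor simplifications the paper exploits that you may want to adopt: first, on $\{S_T>K\}$ one has $1+K/S_T<2$ outright, so Lemma~\ref{yvaroon} is not needed for that factor; second, for what you flag as the main obstacle, the paper avoids arguing that positivity of $D^{\overline N}_{t,z}X_T$ survives the shift by instead writing the finite difference explicitly,
\[
\Bigl|D^{\overline N}_{r,z_0}\bigl[(e^{D^{\overline N}_{t,z}X_T}-1)^{-1}\bigr]\Bigr|
=\Bigl|\frac{e^{D^{\overline N}_{t,z}X_T}\bigl(e^{D^{\overline N}_{r,z_0}D^{\overline N}_{t,z}X_T}-1\bigr)}{(e^{D^{\overline N}_{t,z}X_T}-1)\bigl(e^{D^{\overline N}_{r,z_0}D^{\overline N}_{t,z}X_T+D^{\overline N}_{t,z}X_T}-1\bigr)}\Bigr|
\le 1+\bigl|D^{\overline N}_{t,z}X_T\bigr|^{-1},
\]
which feeds straight back into Lemma~\ref{varoon} without any second-order positivity analysis.
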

\begin{proof}
We will prove the claim in \autoref{appendixB}.
\end{proof}
\subsection{Delta Greek for Simple Asian options}
Consider a simple Asian options with the payoff function $F=f(Y_T)=(Y_T-K)^{+}$, where
\begin{equation*}\label{equasian}
	Y_T=\frac{1}{T}\int_{0}^{T}S_tdt.
\end{equation*}
Recall Section 2.2 in \cite{nualart}, put $h(t) = S_t$, and use the relation $\delta^W(Gu)=G\delta^W(u)-\int_{0}^{T}D_t^{W}G u_tdt$, with $u(.)=S_.$ and $G=\frac{1}{Y_T}$ to derive 
\begin{align*}
	H:=\delta^W\Big(\frac{Y_TS_.}{\int_{0}^{T}S_u D_u^{W}Y_Tdu}\Big)& =\delta^W\Big(\frac{Y_TS_.}{\frac{\sigma}{T}\int_{0}^{T} S_u \int_{u}^{T}S_tdtdu}\Big)=\delta^W\Big(\frac{Y_TS_.}{\frac{\sigma}{2T}(\int_{0}^{T}S_udu)^2}\Big)=2\delta^W\Big(\frac{S_.}{T\sigma Y_T }\Big) \notag\\
	&= \frac{2}{T\sigma Y_T}\delta^W(S_.)+ \frac{2}{T \sigma}\int_{0}^{T}\frac{D^W_tY_T }{Y_T^2}S_tdt\notag\\
	&=\frac{2}{T\sigma Y_T}\delta^W(S_.)+ T.  
\end{align*}
Then, by a classical computation, we result
\begin{align}
\Delta^{W}=&\frac{\partial}{\partial{S_0}}\mathbb{E}(1_{{\overline{N}}=0}f(Y_T)_=\mathbb{E}(1_{{\overline{N}}=0}f^{\prime}(Y_T)\frac{\partial{Y_T}}{\partial{S_0}})=\frac{1}{S_0}\mathbb{E}(1_{{\overline{N}}=0}f^{\prime}(Y_T)Y_T)\notag\\
&=\frac{1}{S_0}\mathbb{E}\Big(f(Y_T)\delta^W\Big(1_{{\overline{N}}=0}\frac{Y_Th(.)}{\int_{0}^{T}h(u)D_u^{W}Y_Tdu}\Big)\Big)\notag\\
& = \frac{1}{S_0}\mathbb{E}\Big(f(Y_T)1_{{\overline{N}}=0}[\frac{2}{T\sigma Y_T}\delta^W( S_.)+ T]\Big). \label{dwweightAsian}
\end{align}
\begin{lemma}\label{asian1}
If there is some function $u(.,.)$ such that $v(t,z)= 1_{\left(0,\lambda(t)\right]}(z)u(t,z)$ belongs to the domain of $\delta^{\overline{N}}$ and 
	\begin{align*}
		\mathbb{E}(f^{\prime}(Y_T)\frac{\partial{Y_T}}{\partial{S_0}})=\mathbb{E}\Big(\int_{0}^{T}\int_{\mathbb{R}_{+}}1_{\left(0,\lambda(t)\right]}(z)u(t,z)\{f(Y_T+D_{t,z}^{\overline{N}}Y_T)-f(Y_T)\}dzdt\Big),
	\end{align*}
	then $\Delta^{\overline{N}}:=\frac{\partial }{\partial S_0}\mathbb{E}\Big(f(Y_T)\Big) =\mathbb{E}\Big(f(Y_T)\delta^{\overline{N}}(1_{\left(0,\lambda(t)\right]}(z)u(t,z))\Big)$.
\end{lemma}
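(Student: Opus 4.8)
The plan is to mirror the argument behind Lemma~\ref{deltann}, now with $F=f(Y_T)$ in place of $f(S_T)$. The three ingredients are the chain rule for the Malliavin derivative $D^{\overline{N}}$ on the Poisson space, the $D^{\overline{N}}$--$\delta^{\overline{N}}$ duality recalled just before the statement, and differentiation under the expectation sign. Note that, as in the European case, the hypotheses of the lemma ($v=1_{(0,\lambda(t)]}(z)u(t,z)\in Dom(\delta^{\overline{N}})$ and the displayed integration-by-parts identity) are assumed, so only these three structural steps remain.

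First I would identify $D^{\overline{N}}_{t,z}Y_T$. Since $Y_T=\frac1T\int_0^T S_s\,ds$ and, by Lemma~\ref{lemma4.1}, $\sup_{0\le s\le T}S_s\in L^p$ for every $p\ge2$, the ($L^2$-valued) Bochner integral in $s$ commutes with the closed operator $D^{\overline{N}}_{t,z}$, so
\[
D^{\overline{N}}_{t,z}Y_T=\frac1T\int_0^T D^{\overline{N}}_{t,z}S_s\,ds=\frac1T\int_t^T S_s\bigl(e^{D^{\overline{N}}_{t,z}X_s}-1\bigr)\,ds,
\]
using the formula for $D^{\overline{N}}_{t,z}S_s$ obtained above. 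Next, applying Proposition~\ref{lipF} to the Lipschitz payoff $f$ (approximating $f$ by smooth functions and passing to the limit, which is legitimate because the law of $Y_T$ has no atom, exactly as in the European case), one gets the chain rule $D^{\overline{N}}_{t,z}f(Y_T)=f\bigl(Y_T+D^{\overline{N}}_{t,z}Y_T\bigr)-f(Y_T)$.

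Then, using that $v(t,z)=1_{(0,\lambda(t)]}(z)u(t,z)\in Dom(\delta^{\overline{N}})$ by hypothesis, the duality relation gives
\[
\mathbb{E}\bigl(f(Y_T)\,\delta^{\overline{N}}(v)\bigr)=\mathbb{E}\Bigl(\bigl\langle D^{\overline{N}}_{t,z}f(Y_T),v(t,z)\bigr\rangle_{L^2([0,T]\times\mathbb{R}_+)}\Bigr)=\mathbb{E}\Bigl(\int_0^T\!\!\int_{\mathbb{R}_+}\bigl\{f(Y_T+D^{\overline{N}}_{t,z}Y_T)-f(Y_T)\bigr\}\,1_{(0,\lambda(t)]}(z)u(t,z)\,dz\,dt\Bigr),
\]
and by the assumed identity the right-hand side equals $\mathbb{E}\bigl(f'(Y_T)\tfrac{\partial Y_T}{\partial S_0}\bigr)$. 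Finally I would justify $\frac{\partial}{\partial S_0}\mathbb{E}(f(Y_T))=\mathbb{E}\bigl(f'(Y_T)\tfrac{\partial Y_T}{\partial S_0}\bigr)$ by dominated convergence: $\tfrac{\partial Y_T}{\partial S_0}=\tfrac{Y_T}{S_0}=\tfrac1{TS_0}\int_0^T S_s\,ds$ lies in every $L^p$ by Lemma~\ref{lemma4.1}, and since $f$ is Lipschitz the difference quotients in $S_0$ are dominated by a multiple of $\sup_{0\le s\le T}S_s/S_0\in L^1$, so the limit passes inside the expectation. Chaining the three displays yields $\Delta^{\overline{N}}=\mathbb{E}\bigl(f(Y_T)\,\delta^{\overline{N}}(1_{(0,\lambda(t)]}(z)u(t,z))\bigr)$. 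The step I expect to be the main obstacle is the rigorous commutation of $D^{\overline{N}}$ with the time-integral defining $Y_T$ together with the chain-rule step for the non-smooth payoff; both, however, are strictly parallel to what was already needed for the European option and are handled by the same smoothing and atomlessness arguments.
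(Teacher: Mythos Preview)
Your proposal is correct and follows the same route the paper intends: the paper does not give an explicit proof of Lemma~\ref{asian1} but treats it as the obvious analogue of Lemma~\ref{deltann}, both resting on Proposition~\ref{ddelta} (duality) together with Proposition~\ref{lipF} (the chain rule $D^{\overline{N}}_{t,z}f(Y_T)=f(Y_T+D^{\overline{N}}_{t,z}Y_T)-f(Y_T)$). Your write-up simply fills in the details the paper leaves implicit---the commutation of $D^{\overline{N}}$ with the time integral defining $Y_T$, the approximation argument for the non-smooth payoff, and dominated convergence for $\partial_{S_0}\mathbb{E}[f(Y_T)]$---none of which the paper spells out.
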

Denote $C_Y(w):= \Big\{(t,z)\in [0, T] \times \mathbb{R}_+;\quad   D^{\overline{N}}_{t,z}Y_T+{Y_T-K}> 0\Big\}$, and set $\mathcal{C}_1:= {\int\int}_{C_Y} 1_{\left(0,\lambda(s)\right]}(z_0)  dz_0ds$,  and $\mathcal{C}_2:= {\int\int}_{(C_Y)^c} 1_{\left(0,\lambda(s)\right]}(z_0)  dz_0ds$. We know that $ \mathcal{C}_1+\mathcal{C}_2= \int_0^T \lambda(t) dt$. Since $$ f(Y_T+D_{t,z}^{\overline{N}}Y_T)-f(Y_T) = 1_{C_Y}(t,z) \frac{1}{T} \int_t^T S_u (e^{D_{t,z}^{\overline{N}}X_u}-1) du  +1_{(C_Y)^c}(t,z) (K-Y_T),$$ we can define the function $u$ as follows. To sake of notation, denote 
$$\mathcal{H}_3(t,z):= \frac{T(K+Y_T) H_K(Y_T)}{\mathcal{C}_1 \int_t^T S_u (exp\{D_{t,z}^{\overline{N}}X_u\}-1)du}, \qquad \mathcal{H}_4(t,z):= \frac{H_K(Y_T)}{\mathcal{C}_2},$$
and define
\begin{align*}
	u_2(t,z)=&\begin{cases}
		\frac{1}{2S_0}\mathcal{H}_3(t,z)  \;& (t,z) \in C_Y \cap ([v_1, T] \times  \mathbb{R}_0),\\\\
		-\frac{1}{2S_0}\mathcal{H}_4(t,z)  \;& (t,z) \in (C_Y)^c \cap ([0, v_1] \times  \mathbb{R}_0).
	\end{cases}
\end{align*}
The condition of Lemma \ref{asian1} holds for $u_2$, if $u_2 \in Dom(\delta^N)$, and therefore 
\begin{equation}\label{dNweightAsian}
 \Delta^{\overline{N}}=\mathbb{E}\Big(f(Y_T)\delta^{\overline{N}}(1_{\left(0,\lambda(t)\right]}(z) u_2 )\Big). 
 \end{equation}
\begin{lemma}\label{deltadomainY}
Two processes $1_{\left(0,\lambda(t)\right]}(z)1_{C_Y \cap ([v_1, T] \times  \mathbb{R}_0)}\mathcal{H}_3$ and $1_{\left(0,\lambda(t)\right]}(z)1_{(C_Y)^c  \cap ([0, v_1] \times  \mathbb{R}_0)}\mathcal{H}_4$ belong to $Dom(\delta^{\overline{N}})$.
\end{lemma}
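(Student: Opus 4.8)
The plan is to follow the route of the proof of Lemma~\ref{deltadomain} in \autoref{appendixB}. By the sufficient condition for membership in $\mathrm{Dom}(\delta^{\overline{N}})$ recalled in \autoref{appendixF} (a process belongs to the domain of the Skorokhod integral as soon as it lies in the first Malliavin--Sobolev space), it suffices to check, for each of the two processes $v=v(t,z)$, the two estimates
\begin{equation*}
\E\int_0^T\!\!\int_{\mathbb{R}_+}|v(t,z)|^2\,1_{(0,\lambda(t)]}(z)\,dz\,dt<\infty,\qquad
\E\int_{([0,T]\times\mathbb{R}_+)^2}\!\!\!|D^{\overline{N}}_{s,z'}v(t,z)|^2\,1_{(0,\lambda(t)]}(z)\,1_{(0,\lambda(s)]}(z')\,dz'\,ds\,dz\,dt<\infty.
\end{equation*}
The workhorses are the uniform moment bounds for $\lambda$ and for $e^{\pm X}$ (Lemmas~\ref{ppp0}, \ref{lemma4.1}, \ref{yvaroon}), the exponential bound for $D^{\overline{N}}\lambda$ (Lemma~\ref{dlambdaexp}), and the Laplace-transform/Gamma-function estimate behind Lemma~\ref{varoon}.

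I would first dispose of $w_4:=1_{(0,\lambda(t)]}(z)\,1_{(C_Y)^c\cap([0,v_1]\times\mathbb{R}_0)}\,\mathcal H_4$, the analogue of the $\mathcal H_2$-piece of Lemma~\ref{deltadomain}. On $\{Y_T\le K\}$ it vanishes; on $\{Y_T>K\}$, the geometry noted before \eqref{dNweightAsian} shows that every $(t,z)$ with $t\in[v_1,T]$, $z\le\lambda(t)$ lies in $C_Y$ (since $D^{\overline{N}}_{t,z}X_u\ge F(u)\ge 0$ for $u\ge t\ge v_1$ by Lemma~\ref{v1} and \eqref{dnxF}, hence $D^{\overline{N}}_{t,z}Y_T\ge0$), so $(C_Y)^c\subseteq[0,v_1]\times\{z\le\lambda(t)\}$ and $\mathcal C_2$ is bounded below by a strictly positive constant there, by the same argument used for $\mathcal B_2$ in Lemma~\ref{deltadomain}. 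Then $|\mathcal H_4|\le\mathcal C_2^{-1}$ is bounded, $D^{\overline{N}}_{s,z'}\mathcal H_4$ involves only $D^{\overline{N}}\mathcal C_2$ and the bounded increment $D^{\overline{N}}1_{(C_Y)^c}$, and both square-integrability statements follow from $\E\big(\int_0^T\lambda(t)\,dt\big)<\infty$ together with Lemma~\ref{dlambdaexp}.

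The substance is $w_3:=1_{(0,\lambda(t)]}(z)\,1_{C_Y\cap([v_1,T]\times\mathbb{R}_0)}\,\mathcal H_3$ with $\mathcal H_3(t,z)=\dfrac{T(K+Y_T)H_K(Y_T)}{\mathcal C_1\,\int_t^TS_u\big(e^{D^{\overline{N}}_{t,z}X_u}-1\big)\,du}$. The numerator is controlled by $T(K+Y_T)H_K(Y_T)\le2TY_T$ on $\{Y_T>K\}$, and $Y_T=\frac1T\int_0^TS_t\,dt$ has moments of every order by Lemma~\ref{lemma4.1}. For the denominator, $\mathcal C_1\ge\int_{v_1}^T\lambda(t)\,dt\ge\lambda_0(T-v_1)>0$ on $\{Y_T>K\}$, and on $C_Y\cap([v_1,T]\times\mathbb{R}_0)$ I bound the path integral below using $e^x-1\ge x$ and the positivity of $F$ (Lemma~\ref{v1}),
\begin{equation*}
\int_t^TS_u\big(e^{D^{\overline{N}}_{t,z}X_u}-1\big)\,du\ \ge\ F(t)\int_t^TS_u\,du\ \ge\ F(t)\,S_0\int_t^Te^{X_u}\,du,
\end{equation*}
sharpened near $u\downarrow t$ by $D^{\overline{N}}_{t,z}X_u\to J_t>0$ when a bound uniform away from $v_1$ is needed. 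Combining this with Jensen's inequality $\big(\int_t^Te^{X_u}\,du\big)^{-p}\le(T-t)^{-p-1}\int_t^Te^{-pX_u}\,du$ and $\sup_u\E(S_u^{-p})<\infty$ (Lemma~\ref{yvaroon}) reduces the first estimate for $w_3$ to the finiteness of a deterministic $t$-integral; should that integral be borderline near $v_1$, one replaces the crude bound $e^x-1\ge x$ by a Laplace-transform lower bound for $\int_t^TS_u(e^{D^{\overline{N}}_{t,z}X_u}-1)\,du$ built on Lemmas~\ref{lemmFDlambda}--\ref{varoon}, mirroring how Lemma~\ref{varoon} turns $1/|D^{\overline{N}}X_t|^p$ into an integrable object.

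The Malliavin derivative of $w_3$ is the hard part. Applying $D^{\overline{N}}_{s,z'}$ to $\mathcal H_3$ through the quotient and add-one-point rules of \autoref{appendixF} yields a double quotient whose numerator contains $D^{\overline{N}}_{s,z'}Y_T$, $D^{\overline{N}}_{s,z'}\mathcal C_1$, the bounded increment $D^{\overline{N}}_{s,z'}1_{C_Y}$, and, most delicately, $D^{\overline{N}}_{s,z'}\big(\int_t^TS_u(e^{D^{\overline{N}}_{t,z}X_u}-1)\,du\big)$, which brings in the \emph{second-order} quantities $D^{\overline{N}}_{s,z'}D^{\overline{N}}_{t,z}X_u$ and $D^{\overline{N}}_{s,z'}D^{\overline{N}}_{t,z}\lambda_u$, i.e. the derivatives of the intensity-dependent kernels $k_{(t,z)}$ and $N_{(t,z)}$ of \eqref{sign}--\eqref{nuz}. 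The plan there is to differentiate \eqref{DNX}--\eqref{DNlambda} a second time, bound the resulting increments by powers of $J$, $e^{J}$ and the constants $\alpha,\beta$ using Condition~{\bf H1}, absorb the point-measure terms with Burkholder--Davis--Gundy, and then control the fourth power $\big(\mathcal C_1\int_t^TS_u(e^{D^{\overline{N}}_{t,z}X_u}-1)\,du\big)^{2}$ in the denominator by the same lower bounds, splitting off the singular factors $F(t)^{-1}$, $(T-t)^{-1}$, $S_u^{-1}$ and estimating them in high $L^p$ via Lemmas~\ref{ppp0}, \ref{lemma4.1}, \ref{yvaroon}, \ref{dlambdaexp}, \ref{varoon} together with Hölder. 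I expect this second-order moment estimate --- ensuring that $D^{\overline{N}}D^{\overline{N}}X$ and $D^{\overline{N}}D^{\overline{N}}\lambda$ are integrable enough for the differentiated quotient to be square-integrable against $1_{(0,\lambda(t)]}(z)\,1_{(0,\lambda(s)]}(z')\,dz'\,ds\,dz\,dt$ --- to be the main obstacle; the rest is a lengthy but routine repetition of the estimates already used for Lemmas~\ref{deltadomain}, \ref{lemma4.1}, and \ref{varoon}.
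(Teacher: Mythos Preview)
Your overall architecture matches the paper's (reduce to $\mathbb D^{1,2}_{\overline N}$, treat $\mathcal H_4$ as the easy piece, and focus on $\mathcal H_3$), but you miss the one simplification that does all the work in the paper and thereby set yourself a much harder task than necessary.

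For the $L^2$-bound on $\mathcal H_3$, the paper does not pass through the deterministic lower bound $e^{D_{t,z}^{\overline N}X_u}-1\ge F(t)$ and the resulting factor $F(t)^{-1}$ (whose $t$-integral you rightly worry about near $v_1$). Instead it applies Jensen to the reciprocal directly,
\[
\E\Big(\tfrac{1}{(\int_t^T S_u(e^{D_{t,z}^{\overline N}X_u}-1)\,du)^2}\Big)\le \E\int_t^T\frac{du}{S_u^{2}(e^{D_{t,z}^{\overline N}X_u}-1)^{2}},
\]
and then finishes with H\"older, Lemma~\ref{yvaroon} for $S_u^{-p}$, and Lemma~\ref{varoon} for $(e^{D_{t,z}^{\overline N}X_u}-1)^{-p}$. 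No borderline deterministic integral appears.

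The substantive divergence is in the Malliavin derivative of $\mathcal H_3$. You identify the ``hard part'' as controlling the second-order objects $D^{\overline N}_{s,z'}D^{\overline N}_{t,z}X_u$ and $D^{\overline N}_{s,z'}D^{\overline N}_{t,z}\lambda_u$ via BDG and moment bounds. The paper avoids this entirely by exploiting the sign structure: with $G:=\int_t^T S_u(e^{D_{t,z}^{\overline N}X_u}-1)\,du$, the quotient rule gives
\[
D^{\overline N}_{r,z_0}\Big(\tfrac1G\Big)=\frac{-D^{\overline N}_{r,z_0}G}{G\,(G+D^{\overline N}_{r,z_0}G)},
\]
and on the set in question both $G>0$ and $D^{\overline N}_{r,z_0}G\ge 0$ (adding a point only increases $S_u$ and $D^{\overline N}_{t,z}X_u$ through the self-exciting mechanism), so the elementary inequality $b/(a^2+ab)\le 1/a$ for $a,b>0$ yields the pointwise bound $\bigl|D^{\overline N}_{r,z_0}(1/G)\bigr|\le 1/G$. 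The same trick handles $D^{\overline N}_{r,z_0}(1/\mathcal C_1)$. This collapses the $L^2$-estimate for $D^{\overline N}\mathcal H_3$ back to the $L^2$-estimate for $\mathcal H_3$ already obtained, together with straightforward bounds on $D^{\overline N}_{r,z_0}(K+Y_T)$ via Lemmas~\ref{lemma4.1} and~\ref{dlambdaexp}. No second-order Malliavin analysis is needed at all.

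In short: your plan is not wrong in spirit, but the second-order estimate you flag as ``the main obstacle'' is a self-imposed one. The monotonicity of the add-one-point map on $G$ and $\mathcal C_1$ is what makes the proof short; once you use it, the argument is a routine replay of Lemma~\ref{deltadomain}.
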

\begin{proof}
We will prove the claim in \autoref{appendixC}.
\end{proof}
%%%%%%%%%%%%%%%%%%%%%%%%%%%%%%%%%%%%%%%%%%%%%%
\section{The convergence of the numerical scheme}
In the numerical experiments, the convergence of the numerical scheme used to discretize the processes and apply them in the computation of the delta for numerical examples is essentially required. So we justify the convergence of our method to the exact solution, first.\\ 
Using \cite{n6}, consider equidistant partition of the interval $[0,T]$: $t_i=t_i(n)=\frac{iT}{n}, i = 0, 1, 2, ..., n$, for any $n\in \mathbb{N}$, and define
the discretized process of X, corresponds to the  given partition, in the following form
\begin{align*}
X_{t_j}^{n}&=X_0+(\mu-\frac{\sigma^{2}}{2})t_j+\sigma W_{t_j}+\int_{0}^{t_j}\int_{\mathbb{R}_{+}}(1-e^{J^n_{s}})1_{\left(0,\lambda^{n}(s)\right]}(z)dzds+\int_{0}^{t_j}\int_{\mathbb{R}_{+}}J^n_{s}N^n(ds,dz)\\
&=X_0+(\mu-\frac{\sigma^{2}}{2})t_j+\sigma W_{t_j}+\int_{0}^{t_j}\int_{\mathbb{R}_{+}}(1-e^{J^n_{s}})1_{\left(0,\lambda^{n}(s)\right]}(z)dzds+\int_{0}^{t_j}\int_{\mathbb{R}_{+}}J^n_{s}1_{\left(0,\lambda^{n}(s)\right]}(z)\overline{N}(ds,dz),
\end{align*}
where for every $s\in[t_i,t_{i+1}]$, we have $J^n_{s}=J_{t_i}$, and 
\begin{equation}\label{lamlam}
\lambda^{n}(s)=\lambda^{n}({t_i})+ \int_{t_i}^s \beta (\lambda_0 - \lambda^n(t_i)) du + \alpha \int_{t_i}^s \int_{\mathbb{R}_+} 1_{(0, \lambda^n({t_i})] }(z)  \overline{N}(dt,dz).
\end{equation} 
Here, ${N}^n(dt,dz)$ is a Poisson process associated to $\overline{N}$ with stochastic intensity $\lambda^n$. In this sense, ${N} -{N}^n$ is a Poisson process with stochastic intensity $\lambda-\lambda^n$. Finally, we define $S_{t_j}^{n}=\exp\{X_{t_j}^{n}\}$.\\
In this sense, for every $t_{i+1}$,
$$\lambda^{n}(t_{i+1})=\lambda^{n}({t_i})+  \beta\Delta t  (\lambda_0 - \lambda^n(t_i)) + \alpha \int_{t_i}^{t_{i+1}} \int_{\mathbb{R}_+} 1_{(0, \lambda^n({t_i})] }(z)  \overline{N}(dt,dz).$$ 
Take the expectation and result 
\begin{align*}
\E(\lambda^n(t_{i+1})) &=\E(\lamn(t_{i})) + \beta \Delta t  \Big(\lambda_0 - \E(\lambda^n(t_i))\Big) + \alpha \E(\lambda^n({t_i})) \Delta t \\
& =\beta \Delta t  \lambda_0 + [1+ (\alpha-\beta) \Delta t ] \E(\lambda^n(t_i))  \leq \beta \Delta t  \lambda_0 + \E(\lambda^n(t_i)).
\end{align*}
Hence, for every $i=1, \cdots, n$, 
\begin{equation}\label{supexpectlambda}
\sup_{1 \leq i \leq n}\E(\lambda^n(t_{i})) \leq \sup_{1 \leq i \leq n}\lambda_0(1+(i-1)\beta \Delta t) < \lambda_0(1+\beta).
\end{equation}
In addition, taking the expectation from \ref{lamlam} and then applying \ref{supexpectlambda} deduce for every $s\in[t_i,t_{i+1}]$
\begin{align}
\E(\vert \lambda^n(s)-\lamn(t_{i})\vert )=\alpha \E\Big(\int_{t_i}^s \int_{\mathbb{R}_+}1_{(0, \lambda^n({t_i})] }(z) dz dt\Big)\leq \alpha \Delta t \sup_{1 \leq i \leq n}\E(\lambda^n(t_{i})) \leq \alpha\lambda_0(1+\beta) \Delta t, \label{pmomlamn}
\end{align}
and for every $p\geq 2$, from Burkholder-Davis-Gundy inequality,
\begin{align}
\E(\vert \lambda^n(s)-\lamn(t_{i})\vert^p )&\leq \alpha^p \Big[\E\Big(\int_{t_i}^s \int_{\mathbb{R}_+}1_{(0, \lambda^n({t_i})] }(z) dz dt\Big)+\E\Big(\int_{t_i}^s \int_{\mathbb{R}_+}1_{(0, \lambda^n({t_i})] }(z) dz dt\Big)^{\frac{p}{2}}\Big]\notag\\
&\leq \alpha^{p} \Delta t \sup_{1 \leq i \leq n}\E(\lambda^n(t_{i}))+\alpha^p (\Delta t)^{\frac{p}{2}-1} \E\Big(\int_{t_i}^s \int_{\mathbb{R}_+}1_{(0, \lambda^n({t_i})] }(z) dz dt\Big) \notag\\
&\leq \alpha^p \lambda_0(1+\beta) \Big(\Delta t+ (\Delta t)^{\frac{p}{2}}\Big) \leq \alpha^p \lambda_0(1+\beta) \Delta t. \label{pmomlamnp}
\end{align}
\begin{lemma}\label{lemlambdan}
For every $p \geq 2$, there exists some constant $c_{1,p}$ such that for every $i=1, ..., n$
$$\sup_{{t_i} \leq t \leq t_{i+1}} \mathbb{E}\Big(\lambda(t) -\lambda^{n}(t)\Big)^{p}\leq c_{1,p} n^{-1}.$$ 
\end{lemma}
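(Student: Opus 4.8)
The plan is to estimate the difference $\lambda(t) - \lambda^n(t)$ by writing both processes in their SDE form and splitting the error into three contributions: the discretization of the drift term, the discretization of the jump coefficient $1_{(0,\lambda(s)]}$ versus $1_{(0,\lambda^n(t_i)]}$, and the accumulated error coming from $\lambda^n(t_i) - \lambda(t_i)$ at the left endpoint of the subinterval. Throughout I will work on a subinterval $[t_i, t_{i+1}]$ and use the fact that $\lambda^n$ is frozen at $t_i$ inside that subinterval; the bridge estimates \eqref{pmomlamn} and \eqref{pmomlamnp} will control how far $\lambda^n(s)$ drifts from $\lambda^n(t_i)$, and Lemma \ref{ppp0} will give the finiteness of all the $L^p$ moments of $\lambda$ that appear.

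First I would fix $t \in [t_i, t_{i+1}]$ and write, using \eqref{3} and \eqref{lamlam},
\begin{align*}
\lambda(t) - \lambda^n(t) &= \big(\lambda(t_i) - \lambda^n(t_i)\big) + \int_{t_i}^t \beta\big(\lambda^n(t_i) - \lambda(s)\big)\,ds \\
&\quad + \alpha \int_{t_i}^t \int_{\mathbb{R}_+} \big(1_{(0,\lambda(s)]}(z) - 1_{(0,\lambda^n(t_i)]}(z)\big)\,\overline{N}(ds,dz).
\end{align*}
Taking $p$-th moments, applying the elementary inequality $|a+b+c|^p \le 3^{p-1}(|a|^p+|b|^p+|c|^p)$, and treating the stochastic integral with the Burkholder–Davis–Gundy inequality exactly as in the proof of Lemma \ref{ppp0} and in \eqref{pmomlamnp}, the jump term contributes (up to constants) $\mathbb{E}\int_{t_i}^t |\lambda(s) - \lambda^n(t_i)|\,ds$ plus a higher-order term in $\Delta t$, because $\big|1_{(0,\lambda(s)]} - 1_{(0,\lambda^n(t_i)]}\big|$ integrated in $z$ over $\mathbb{R}_+$ equals $|\lambda(s) - \lambda^n(t_i)|$. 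The drift term contributes $\mathbb{E}\int_{t_i}^t |\lambda(s)-\lambda^n(t_i)|^p\,ds$ up to a factor $(\Delta t)^{p-1}$. Then I would further split $\lambda(s) - \lambda^n(t_i) = (\lambda(s) - \lambda(t_i)) + (\lambda(t_i) - \lambda^n(t_i)) + (\lambda^n(t_i) - \lambda^n(s)) + (\lambda^n(s)-\lambda^n(t_i))$... more efficiently, $\lambda(s)-\lambda^n(t_i) = (\lambda(s)-\lambda(t_i)) + (\lambda(t_i)-\lambda^n(t_i))$, where the first bracket is the continuous-process increment over an interval of length $\le \Delta t$ (bounded in $L^p$ by $C\Delta t$ via \eqref{lamb1}, \eqref{kN} and BDG, just as in Lemma \ref{ppp0}), and the second bracket is the quantity we are trying to estimate evaluated at $t_i$.

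Collecting terms, I would arrive at an inequality of the form, valid for $t \in [t_i,t_{i+1}]$,
\begin{equation*}
\sup_{t_i \le s \le t} \mathbb{E}\big(\lambda(s) - \lambda^n(s)\big)^p \le \big(1 + C\Delta t\big)\,\mathbb{E}\big(\lambda(t_i) - \lambda^n(t_i)\big)^p + C\,(\Delta t)^{1+\gamma}
\end{equation*}
for some $\gamma \ge 0$ and a constant $C$ depending only on $p, \alpha, \beta, \lambda_0, T$ (here I absorb the $(\Delta t)^{p-1}$ and $(\Delta t)^{p/2}$ contributions, which are of order at least $(\Delta t)^2$ for $p\ge 2$, into the $(\Delta t)^{1+\gamma}$ term, with $\gamma \ge 1$). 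Setting $e_i := \mathbb{E}(\lambda(t_i)-\lambda^n(t_i))^p$ this reads $e_{i+1} \le (1+C\Delta t) e_i + C(\Delta t)^{1+\gamma}$ with $e_0 = 0$, and a discrete Gronwall argument gives $\sup_{i} e_i \le C(\Delta t)^\gamma \frac{e^{CT}-1}{C} = O(\Delta t) = O(n^{-1})$, since $\Delta t = T/n$. Combining this with the intra-interval bound $\sup_{t_i\le t\le t_{i+1}}\mathbb{E}(\lambda(t)-\lambda^n(t))^p \le (1+C\Delta t)e_i + C(\Delta t)^{1+\gamma}$ yields the claimed $c_{1,p} n^{-1}$ bound.

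The main obstacle is handling the indicator-difference term $1_{(0,\lambda(s)]}(z) - 1_{(0,\lambda^n(t_i)]}(z)$ in the stochastic integral carefully: it is not Lipschitz in the usual sense, but its $L^1(dz)$-norm is exactly $|\lambda(s)-\lambda^n(t_i)|$ and its $L^p(dz)$-norm for $p\ge 1$ is also $|\lambda(s)-\lambda^n(t_i)|$ (since the indicator difference takes values in $\{-1,0,1\}$), so after applying BDG the compensator integral $\int_{\mathbb{R}_+}\big(1_{(0,\lambda(s)]}(z)-1_{(0,\lambda^n(t_i)]}(z)\big)^2\,dz\,ds = \int |\lambda(s)-\lambda^n(t_i)|\,ds$ is linear in the error, which is what makes the Gronwall iteration close. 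One must also be slightly careful that $\lambda^n$, frozen at $t_i$, is $\mathcal{F}^{\overline N}_{t_i}$-measurable and hence predictable on $[t_i,t_{i+1}]$, so that the stochastic-intensity formulas \eqref{kN}–\eqref{5} apply to the difference process; this is exactly the setup described after \eqref{lamlam}, where $N-N^n$ is identified as a Poisson process with stochastic intensity $\lambda-\lambda^n$.
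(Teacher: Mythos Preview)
Your outline follows the paper's proof closely: both work on $[t_i,t_{i+1}]$, decompose $\lambda(t)-\lambda^n(t)$ into the endpoint error plus drift and jump discrepancies, handle the jump integral via BDG together with $\int_{\mathbb{R}_+}\big(1_{(0,\lambda(s)]}-1_{(0,\lambda^n(t_i)]}\big)^2\,dz=|\lambda(s)-\lambda^n(t_i)|$, and then iterate a Gronwall recursion. The only difference is cosmetic: you split $\lambda(s)-\lambda^n(t_i)=(\lambda(s)-\lambda(t_i))+(\lambda(t_i)-\lambda^n(t_i))$, whereas the paper uses $(\lambda(s)-\lambda^n(s))+(\lambda^n(s)-\lambda^n(t_i))$ together with a continuous Gronwall on $[t_i,t]$ before passing to the discrete recursion.

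There is, however, a gap that your proposal and the paper's written argument share. After applying $|a+b+c|^p\le 3^{p-1}(|a|^p+|b|^p+|c|^p)$, the endpoint term $e_i=\mathbb{E}|\lambda(t_i)-\lambda^n(t_i)|^p$ carries the prefactor $3^{p-1}$, not $1$; the additional $e_i$-dependence coming from the drift and jump integrals is indeed $O(\Delta t)\cdot e_i$, but it is added to $3^{p-1}e_i$, so the honest recursion is $e_{i+1}\le 3^{p-1}(1+C\Delta t)\,e_i+\cdots$, and iterating $n$ times produces the divergent factor $(3^{p-1})^n$. Separately, your claim that the source is $O((\Delta t)^{1+\gamma})$ with $\gamma\ge 1$ does not hold: the BDG large-jump contribution $\mathbb{E}\int_{t_i}^{t}|\lambda(s)-\lambda^n(t_i)|\,ds$, once Young's inequality is used to pass from the $L^1$ to the $L^p$ error, leaves a residual of exact order $\Delta t$ (and for $p=2$ the $(\Delta t)^{p/2}$ term is also only $\Delta t$, not $(\Delta t)^2$), so in fact $\gamma=0$. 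With a source of order $\Delta t$ and a multiplicative factor exceeding $1$, the discrete recursion does not deliver $O(n^{-1})$. A way to repair the argument is to run the estimate globally on $[0,t]$ with the step map $\eta(s)=t_j$ for $s\in[t_j,t_{j+1})$ (so that no endpoint term is amplified), and to establish the $L^1$ rate $\sup_t\mathbb{E}|\lambda(t)-\lambda^n(t)|=O(\Delta t)$ first so that the BDG large-jump term can be controlled directly rather than through Young's inequality.
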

\begin{proof}
From the definition of $\lambda^n$, for every $i=1, ..., n$, and then applying Burkholder-Davis-Gundy inequality we deduce 
\begin{align*}
 \mathbb{E}\Big(\vert \lambda(t)- \lambda^n(t)  \vert^p\Big) & \leq  2^{p-1} \mathbb{E}\Big(\vert \lambda(t_i )- \lambda^n(t_i)  \vert^p + \beta^p \int_{t_i}^t \vert \lambda(s) - \lambda^n({t_i}) \vert^p ds \\
&+ \alpha^p \Big\vert \int_{t_i}^t \int_{\mathbb{R}_+} \Big(1_{(0, \lambda(s)] }(z) -1_{(0, \lambda^n({t_i})] }(z) \Big) \overline{N}(ds,dz) \Big\vert^p  \Big)\\
& \leq   2^{p-1}\Big[\mathbb{E}\Big(\vert \lambda(t_i )- \lambda^n(t_i)  \vert^p \Big) + \beta^p \int_{t_i}^t \mathbb{E}\Big(\vert \lambda(s) - \lambda^n({t_i}) \vert^p \Big) ds \\
&+ \alpha^p \int_{t_i}^t \int_{\mathbb{R}_+} \mathbb{E}\Big(1_{(0, \lambda(s)] }(z) -1_{(0, \lambda^n({t_i})] }(z) \Big)^p dzds \\
&+ \alpha^p \mathbb{E}\Big(\Big\vert \int_{t_i}^t \int_{\mathbb{R}_+} \Big(1_{(0, \lambda(s)] }(z) -1_{(0, \lambda^n({t_i})] }(z) \Big)^2 dzds \Big\vert^{\frac{p}{2}}  \Big) \Big].\\
\end{align*}
It is easy to see that $\Big(1_{(0, \lambda(s)] }(z) -1_{(0, \lambda^n({t_i})] }(z) \Big)^p \leq 2^{p} \Big(1_{(0, \lambda(s)] }(z) -1_{(0, \lambda^n({t_i})] }(z) \Big)^2$, and $$\int_{\mathbb{R}_+} \Big(1_{(0, \lambda(s)] }(z) -1_{(0, \lambda^n({t_i})] }(z) \Big)^2 dz \leq \vert \lambda(s) - \lambda^n(t_i) \vert. $$  Hence, applying Hölder inequality results
\begin{align*}
 \mathbb{E}\Big(\vert \lambda(t)- \lambda^n(t)  \vert^p\Big) & \leq
2^{p-1}\Big[\mathbb{E}\Big(\vert \lambda(t_i )- \lambda^n(t_i)  \vert^p \Big) + \beta^p \int_{t_i}^t \mathbb{E}\Big(\vert \lambda(s) - \lambda^n({t_i}) \vert^p \Big) ds \\
&+ 2^{p}\alpha^p \int_{t_i}^t \mathbb{E}\Big(\lambda(s)-\lambda^n({t_i})\Big) ds+ \alpha^p (\triangle t)^{\frac{p}{2}-1} \mathbb{E}\Big(\int_{t_i}^t \vert \lambda(s)-\lambda^n({t_i}) \vert^{\frac{p}{2}}ds\Big)
\Big]\\
& \leq 2^{p-1}\Big[\mathbb{E}\Big(\vert \lambda(t_i )- \lambda^n(t_i)  \vert^p \Big) + 2^p\beta^p \int_{t_i}^t \mathbb{E}\Big(\vert \lambda(s) - \lambda^n({s}) \vert^p \Big) ds \\
&+ 2^p\beta^p \int_{t_i}^t \mathbb{E}\Big(\vert \lambda^n(s) - \lambda^n({t_i}) \vert^p \Big) ds + 2^p\alpha^p \int_{t_i}^t \mathbb{E}\Big(\lambda(s)-\lambda^n(s)\Big) ds\\
&+ 2^p\alpha^p \int_{t_i}^t \mathbb{E}\Big(\lambda^n(s)-\lambda^n({t_i})\Big) ds+\frac12 \alpha^p (\triangle t)^{\frac{p}{2}-1} \mathbb{E}\Big(\int_{t_i}^t \vert \lambda(s)-\lambda^n(s) \vert^{p}ds\Big)\\
&+ \frac12\alpha^p (\triangle t)^{\frac{p}{2}-1} \mathbb{E}\Big(\int_{t_i}^t \vert \lambda^n(s)-\lambda^n({t_i}) \vert^{p}ds\Big) + \alpha^p (\triangle t)^{\frac{p}{2}}
\Big].
\end{align*}
Now, substitute the upper bounds of \ref{pmomlamn} and \ref{pmomlamnp} in the above inequality to result 
\begin{align*}
  \mathbb{E}\Big(\vert \lambda(t)- \lambda^n(t)  \vert^p\Big) & \leq
2^{p-1}\Big[\mathbb{E}\Big(\vert \lambda(t_i )- \lambda^n(t_i)  \vert^p \Big) + \Big( 2^p\beta^p +\frac12\alpha^p (\triangle t)^{\frac{p}{2}-1} \Big)\int_{t_i}^t \mathbb{E}\Big(\vert \lambda(s) - \lambda^n({s}) \vert^p \Big) ds \\
&+ 2^p\alpha^p \int_{t_i}^t \mathbb{E}\Big(\lambda(s)-\lambda^n(s)\Big) ds +C_{p,1} \Delta t\Big]\\
&\leq 2^{p-1}\Big[\mathbb{E}\Big(\vert \lambda(t_i )- \lambda^n(t_i)  \vert^p \Big) + \Big( 2^p\beta^p +\frac12\alpha^p T^p \Big)\int_{t_i}^t \mathbb{E}\Big(\vert \lambda(s) - \lambda^n({s}) \vert^p \Big) ds \\
&+ \frac{2^p}{p}\alpha^p \int_{t_i}^t \mathbb{E}\Big(\vert\lambda(s)-\lambda^n(s)\vert^p\Big) ds + \frac{2^p}{q}\alpha^p  \Delta t +C_{p,1} \Delta t\Big],\\
\end{align*}
where $C_{p,1}$ is a constant and we apply Hölder inequality in the last inequality. Finally, Gronwall inequality deduce 
\begin{align*}
  \mathbb{E}\Big(\vert \lambda(t)- \lambda^n(t)  \vert^p\Big) & \leq
2^{p-1}\Big[\mathbb{E}\Big(\vert \lambda(t_i )- \lambda^n(t_i)  \vert^p \Big)+ \frac{2^p}{q}\alpha^p  \Delta t +C_{p,1} \Delta t \Big] e^{A_{p,1}\Delta t},
\end{align*}
where $A_{p,1}= 2^{p+1}\beta^p + 2^{p-2}\alpha^p T^p +  \frac{2^{2p}}{p}\alpha^p$. We can continue this process recursively to derive there exists some constant $c_{1,p}$ such that 
\begin{align*}
  \mathbb{E}\Big(\vert \lambda(t)- \lambda^n(t)  \vert^p\Big) & \leq (\lambda_0-\lambda^n_0)e^{A_{p,1}T} +\Delta t (e^T-1) \leq c_{1,p} \Delta t.
\end{align*}
\end{proof}
The following two inequalities are required to establish the convergence rate. For every $x,y\in\mathbb{R}$ and $p\in\mathbb{N}$,
\begin{equation}\label{e}
	\vert e^{x}-e^{y}\vert\leq(e^{x}+e^{y})\vert x-y\vert,
\end{equation}
\begin{equation}\label{2p}
	(x+y)^{2p}\leq 2^{2p-1}(x^{2p}+y^{2p}).
\end{equation}
We prove the convergence of $X_t^n$ and $S_t^n$ in the following lemma.
\begin{lemma}\label{lemma1}
Assume that the function $J_.$ is a Lipschitz continuous function. Then, there exists some constant $c_2>0$ such that
	\begin{equation}\label{equboundX}
		\mathbb{E}(X_T-X_T^{n})^{2p}\leq c_2n^{-\frac{p}{2}}, \qquad  \qquad \qquad  \mathbb{E}(S_T-S_T^{n})^{2p}\leq c_2n^{-\frac{p}{2}}.
	\end{equation}
\end{lemma}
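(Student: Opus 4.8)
The plan is to estimate $X_T-X_T^{n}$ head on — no fixed-point/Gronwall argument is needed for $X$ itself, since by \eqref{8} and its discretized analogue $X_T$ and $X_T^{n}$ are explicit functionals of the paths of $\lambda$ and $\lambda^{n}$ — and then to pass from $X$ to $S_T-S_T^{n}=S_0(e^{X_T}-e^{X_T^{n}})$ by means of \eqref{e}. Writing \eqref{8} and its discretized counterpart with the uncompensated Poisson measure and then compensating, one gets the decomposition $X_T-X_T^{n}=A_n+M_n$ with
\begin{equation*}
A_n=\int_0^T\big[(J_s-e^{J_s}+1)\lambda(s)-(J^n_s-e^{J^n_s}+1)\lambda^{n}(s)\big]\,ds,\qquad M_n=\int_0^T\!\!\int_{\mathbb{R}_+}H_s(z)\,\tilde{\overline N}(ds,dz),
\end{equation*}
where $H_s(z):=J_s1_{(0,\lambda(s)]}(z)-J^n_s1_{(0,\lambda^{n}(s)]}(z)$; by \eqref{2p} it then suffices to bound $\mathbb{E}|A_n|^{2p}$ and $\mathbb{E}|M_n|^{2p}$ separately.

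By inserting intermediate terms every difference that occurs reduces to one of two prototypes. The time-grid differences $J_s-J^n_s=J_s-J_{t_i}$, and (through \eqref{e} together with the boundedness of $J$ on $[0,T]$, which follows from $J_0=0$ and Lipschitzness) $e^{J_s}-e^{J^n_s}$, are both $O(\Delta t)=O(n^{-1})$ by the Lipschitz hypothesis on $J$, hence harmless. The intensity differences are handled using the elementary identity that for every $q\ge1$,
\begin{equation*}
\int_{\mathbb{R}_+}\big|1_{(0,\lambda(s)]}(z)-1_{(0,\lambda^{n}(s)]}(z)\big|^{q}\,dz=\big|\lambda(s)-\lambda^{n}(s)\big|,
\end{equation*}
so that after the $z$-integration such a term is controlled by powers of $|\lambda(s)-\lambda^{n}(s)|$, whose moments are estimated by Lemma \ref{lemlambdan}; the remaining factors $J_s^{m}$ and powers of $\lambda^{n}(s)$ are absorbed using Lemma \ref{ppp0} together with the uniform-in-$n$ bound $\sup_n\sup_{s\le T}\mathbb{E}\,\lambda^{n}(s)^{q}<\infty$, which is obtained exactly as in Lemma \ref{ppp0} for the frozen-coefficient scheme (using the telescoping estimate \eqref{supexpectlambda}).

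For $A_n$ I would use Jensen's inequality in the form $\big|\int_0^T f\,ds\big|^{2p}\le T^{2p-1}\int_0^T|f|^{2p}\,ds$, split the integrand as $(J_s-e^{J_s}+1)\big(\lambda(s)-\lambda^{n}(s)\big)+\big[(J_s-e^{J_s})-(J^n_s-e^{J^n_s})\big]\lambda^{n}(s)$ and apply the two prototypes. For $M_n$ I would use the Burkholder--Davis--Gundy (Kunita) inequality for compensated Poisson integrals,
\begin{equation*}
\mathbb{E}|M_n|^{2p}\le C_p\,\mathbb{E}\Big(\int_0^T\!\!\int_{\mathbb{R}_+}H_s(z)^{2}\,dz\,ds\Big)^{p}+C_p\,\mathbb{E}\int_0^T\!\!\int_{\mathbb{R}_+}H_s(z)^{2p}\,dz\,ds,
\end{equation*}
and then bound $\int_{\mathbb{R}_+}H_s(z)^{2}\,dz$ and $\int_{\mathbb{R}_+}H_s(z)^{2p}\,dz$ pointwise by multiples of $J_s^{2}|\lambda(s)-\lambda^{n}(s)|+|J_s-J^n_s|^{2}\lambda^{n}(s)$ and $J_s^{2p}|\lambda(s)-\lambda^{n}(s)|+|J_s-J^n_s|^{2p}\lambda^{n}(s)$ respectively, finishing with Hölder in $ds$ and Lemma \ref{lemlambdan}. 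Collecting the contributions gives the estimate for $\mathbb{E}(X_T-X_T^{n})^{2p}$, the convergence rate being governed by Lemma \ref{lemlambdan}. Finally, \eqref{e} gives $|S_T-S_T^{n}|\le S_0\big(e^{X_T}+e^{X_T^{n}}\big)|X_T-X_T^{n}|$; the Cauchy--Schwarz inequality, the exponential moment bound $\mathbb{E}\,e^{4pX_T}<\infty$ from Lemma \ref{lemma4.1} (with its uniform-in-$n$ analogue for $X_T^{n}$, proved the same way), and the bound already obtained for $\mathbb{E}(X_T-X_T^{n})^{4p}$ then yield the second inequality.

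The main obstacle is the intensity contribution: the map $z\mapsto 1_{(0,\lambda(s)]}(z)$ is discontinuous in the random and only approximately computed threshold $\lambda(s)$, so the usual Lipschitz Euler-scheme argument does not apply and all intensity errors must be funnelled through Lemma \ref{lemlambdan} (which in turn is where the interplay between the true dynamics \eqref{8} and the frozen-intensity update \eqref{lamlam} is resolved). A further, unavoidable but routine, point is that every moment bound used above for $\lambda,X,S$ has to be re-established uniformly in $n$ for the discretized processes $\lambda^{n},X^{n},S^{n}$, keeping all constants independent of $n$.
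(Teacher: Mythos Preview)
Your proposal is correct and follows essentially the same route as the paper: the same splitting into a Lebesgue-integral part and a Poisson-integral part, the same use of the identity $\int_{\mathbb{R}_+}|1_{(0,\lambda(s)]}-1_{(0,\lambda^{n}(s)]}|^{q}\,dz=|\lambda(s)-\lambda^{n}(s)|$, the same appeal to Lemma~\ref{lemlambdan} for the intensity error, and the same Burkholder--Davis--Gundy bound for the jump term. The only cosmetic difference is that you carry the martingale part with respect to the compensated measure $\tilde{\overline N}$, whereas the paper keeps the uncompensated $\overline N$; your version is arguably cleaner since BDG applies directly. You are also more explicit than the paper about the passage from $X_T$ to $S_T$ via \eqref{e} and about the need for uniform-in-$n$ moment bounds on $\lambda^{n}$ and $X^{n}$, both of which the paper's proof leaves implicit.
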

\begin{proof} 
Let's start with \eqref{equboundX}. Using \eqref{2p}
	\begin{align}
	\mathbb{E}\Big( X_T-X_T^{n}\Big)^{2p}&=\mathbb{E}\Big(\int_{0}^{T}\int_{\mathbb{R}_{+}}(1-e^{J_s})1_{\left(0,\lambda(s)\right]}(z)dzds+\int_{0}^{T}\int_{\mathbb{R}_{+}}J_s1_{\left(0,\lambda(s)\right]}(z)\overline{N}(ds,dz)\nonumber\\
	&-[\int_{0}^{T}\int_{\mathbb{R}_{+}}(1-e^{J_s^{n}})1_{\left(0,\lambda^{n}(s)\right]}(z)dzds+\int_{0}^{T}\int_{\mathbb{R}_{+}}J_s^{n}1_{\left(0,\lambda^{n}(s)\right]}(z)\overline{N}(ds,dz)]
	\Big)^{2p}\nonumber\\
	&\leq 4^{2p-1}\mathbb{E}\Big( \int_{0}^{T}(\lambda(s)-\lambda^{n}(s))ds\Big)^{2p}+4^{2p-1}\mathbb{E}\Big(\int_{0}^{T}(e^{J_s^{n}}\lambda^{n}(s)-e^{J_s}\lambda(s))ds\Big)^{2p}\nonumber\\
	&+4^{2p-1}\mathbb{E}\Big(\int_{0}^{T}\int_{\mathbb{R}_{+}}[J_s1_{\left(0,\lambda(s)\right]}(z)-J_s^{n}1_{\left(0,\lambda^{n}(s)\right]}(z)]\overline{N}(ds,dz)\Big)^{2p}\nonumber\\
	&\leq 4^{2p-1}\Big(T^2\sup_{0 \leq s \leq T}\mathbb{E}\Big((\lambda(s)-\lambda(s)^{n})^{2p}\Big)+I_1+I_2\Big).\label{xxn}
\end{align}
To bound $I_1$, from the equations \eqref{e}, \eqref{2p}, and Hölder inequality, connection with Lemma \ref{lemlambdan} and Condition {\bf{H1}}, and finally Lipschitz property of $J_.$, we result there exists some constant $c_6$ that 
\begin{align*}
I_1 &\leq 2^{2p-1}\mathbb{E}\Big(\int_{0}^{T}(e^{J_s^{n}}-e^{J_s})\lambda^n(s)ds\Big)^{2p}+2^{2p-1}\mathbb{E}\Big(\int_{0}^{T}e^{J_s}(\lambda^{n}(s)-\lambda(s))ds\Big)^{2p}\nonumber\\
&\leq T^{2p-1}2^{2p-1}\Big(\sum_{i=1}^{n-1}\int_{t_i}^{t_{i+1}}\Big[2^{2p-1}[(e^{J_{t_i}}-1)^{2p}+(e^{J_s}-1)^{2p}]+2^{2p}\Big] (J_{t_i}- J_s) ds\Big)\sup_{0 \leq s \leq T}\mathbb{E}\Big(\lambda^n(s)\Big)^{2p}\nonumber\\
&+8^{2p-1}\mathbb{E}\Big(\int_{0}^{T}(e^{J_s}-1)^{2p}(\lambda^{n}(s)-\lambda(s))^{2p} ds\Big)+8^{2p-1}\mathbb{E}\Big(\int_{0}^{T}(\lambda^{n}(s)-\lambda(s))^{2p} ds\Big)\nonumber\\
& \leq c_6(\triangle t)^{2p}.	
\end{align*}
Also, from \eqref{2p} we have
\begin{align*}		
I_2 &=\mathbb{E}\Big(\int_{0}^{T}\int_{\mathbb{R}_{+}}J_s1_{\left(0,\lambda(s)\right]}(z)\overline{N}(ds,dz)-\int_{0}^{T}\int_{\mathbb{R}_{+}}J_s1_{\left(0,\lambda^{n}(s)\right]}(z)\overline{N}(ds,dz)\\
&+\int_{0}^{T}\int_{\mathbb{R}_{+}}J_s1_{\left(0,\lambda^{n}(s)\right]}(z)\overline{N}(ds,dz)-\int_{0}^{T}\int_{\mathbb{R}_{+}}J_s^{n}1_{\left(0,\lambda^{n}(s)\right]}(z)\overline{N}(ds,dz)\Big)^{2p}\\
	&\leq 2^{2p-1}\mathbb{E}\Big(\int_{0}^{T}\int_{\mathbb{R}_{+}}(J_s1_{\left(0,\lambda(s)\right]}(z)-J_s1_{\left(0,\lambda^{n} (s)\right]}(z))^{2p}dzds\Big)\\
&+2^{2p-1}\mathbb{E}\Big(\int_{0}^{T}\int_{\mathbb{R}_{+}}(J_s1_{\left(0,\lambda(s)\right]}(z)-J_s1_{\left(0,\lambda^n(s)\right]}(z))^{2}dzds\Big)^p\\
	&+2^{2p-1}\mathbb{E}\Big(\int_{0}^{T}\int_{\mathbb{R}_{+}}(J_s1_{\left(0,\lambda^{n}(s)\right]}(z)-J_s^{n}1_{\left(0,\lambda^{n}(s)\right]}(z))^{2p}dzds\Big)\\
	&+2^{2p-1}\mathbb{E}\Big(\int_{0}^{T}\int_{\mathbb{R}_{+}}(J_s1_{\left(0,\lambda^{n}(s)\right]}(z)-J_s^n1_{\left(0,\lambda^n(s)\right]}(z))^{2}dzds\Big)^p.
\end{align*}
Now, using Condition {\bf{H1}}, the equation \eqref{supexpectlambda}, and Lemma \ref{lemlambdan}, there exists some constant $c_7$ that
\begin{align*}
I_2	&\leq
	(2\gamma T)^{2p}[\sup_{0 \leq s \leq T}\mathbb{E}(\vert \lambda(s)-\lambda^{n}(s)\vert)]+(2\gamma\triangle t)^{2p}\mathbb{E}(\int_{0}^{T}\lambda^{n}(s) ds)\\
	&+(2\gamma T)^{2}[\sup_{0 \leq s \leq T}\mathbb{E}(\vert \lambda(s)-\lambda^{n}(s)\vert^p)]+(2\gamma\triangle t)^{2}\mathbb{E}(\int_{0}^{T}\lambda^{n}(s)ds)^p\\
& \leq c_7  (\triangle t),	 		
\end{align*}
Substitute the bounds of $I_1$ and $I_2$ into equation \eqref{xxn} to complete the proof.
\end{proof}
%%%%%%%%%%%%%%%%%%%%%%%%%%%%%%%%%%%%%%%%%%%%%%%%%
\section{Numerical Example}
In this section, we price European and Asian call options as well as the delta Greek of those for Hawkes processes.\\
%\subsection{Pricing}
Consider the SDE \eqref{6} with the parameters $\sigma=0.10$, $\alpha=0.30$, $\beta=0.80$, $\gamma=0.20$, $\mu=0.05$, $T=1$, $\lambda_0=1$, $S_0=5$. The European call option and simple Asian call option with the expiration date $T$ and the strike price $K$, defined as follows. 
\begin{equation*}
	f(S_T)=\max(S_T-K,0),
\qquad 	f(S_T,Y_T)=\max(\frac{1}{T}\int_{0}^{T}S_tdt-K,0).
\end{equation*} 
The specifications of the computer system with which the program is implemented are Intel(R) Core i$7-9700$K CPU and $64$ GB Memory.\\
Figure \ref{figure1} shows the price of European and Asian call options with $h=0.010$, the number of simulated paths $1000$, and $K=S_0 \times u$ which $u=0.05, 0.10, 0.15, \ldots, 1.20, 1.25, 1.30$.
\begin{figure}[H]
	\centering
	\includegraphics[scale=0.35]{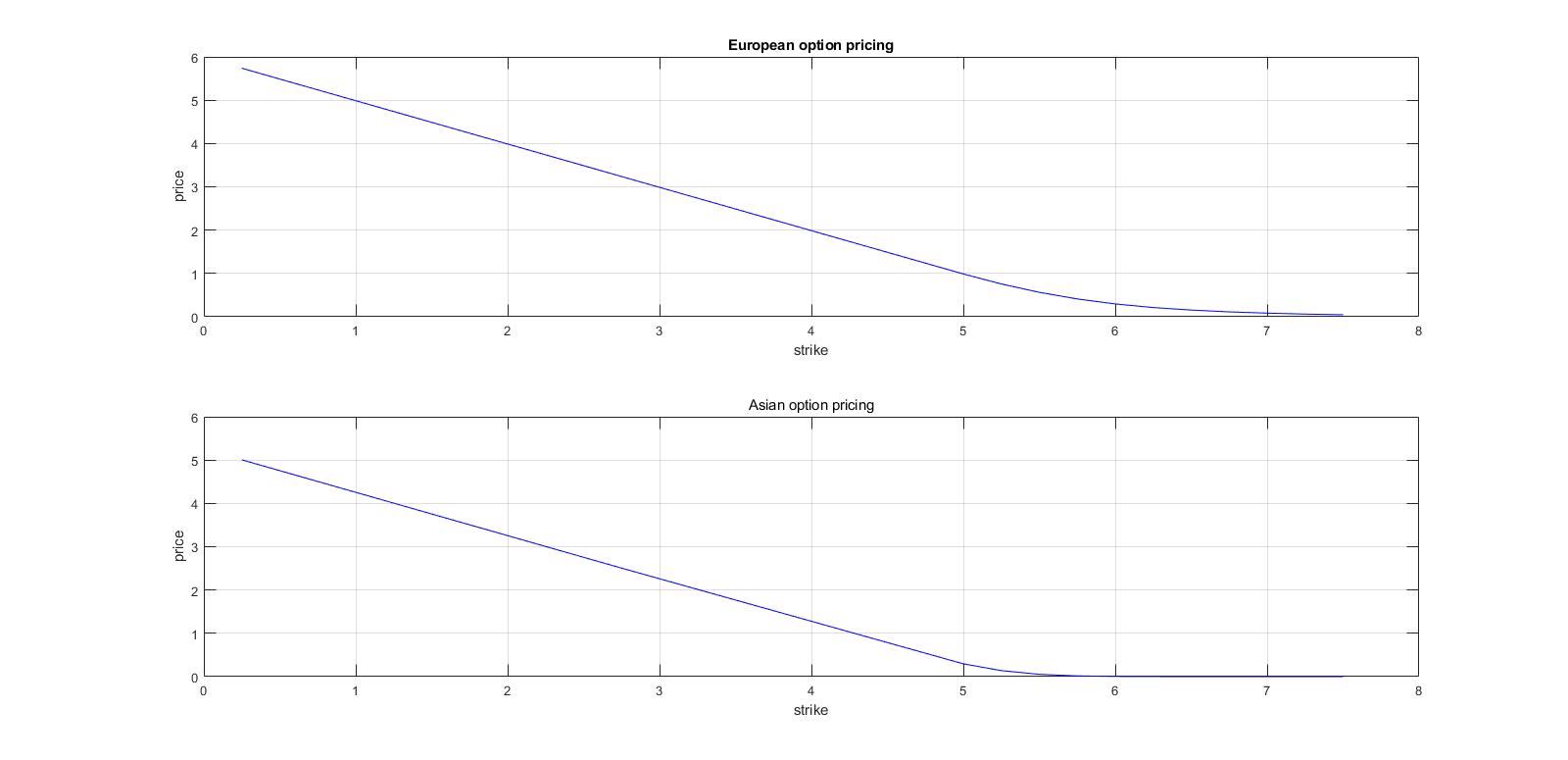}
	\begin{center}\caption{Pricing Asian and European call options.}\label{figure1}
	\end{center}
\end{figure}
To calculate the delta, we see the exact delta expression for European call option and simple Asian call option, respectively, as follow.
\begin{equation*}
	\Delta_0=\mathbb{E}[H_K(S_T)\frac{S_T}{S_0}],
\qquad \text{and}\qquad 
	\Delta_0=\mathbb{E}[H_K(Y_T)\frac{Y_T}{S_0}].
\end{equation*}
The method WM (Wiener-Malliavin weight) is based on \eqref{dwweight} and \eqref{dwweightAsian} for European and Asian options, respectively, and the method PM (Poisson-Malliavin weight) is based on \eqref{dNweight} and \eqref{dNweightAsian} for European and Asian options, respectively. The method WP (Wiener-Poisson weight) is based on the combination of two methods WM and PM. Whereas the symmetric finite difference approach for European call option gives
\begin{equation*}
	\Delta=\frac{\partial}{\partial S_0}\mathbb{E}[\max(S_T-K,0)]=\frac{\mathbb{E}[f(S_T)\vert S_0+h]-\mathbb{E}[f(S_T)\vert S_0-h]}{2h},
\end{equation*}
the symmetric finite difference approach for simple Asian call option gives
\begin{equation*}
	\Delta=\frac{\partial}{\partial S_0}\mathbb{E}[\max(Y_T-K,0)]=\frac{\mathbb{E}[f(Y_T)\vert S_0+h]-\mathbb{E}[f(Y_T)\vert S_0-h]}{2h},
\end{equation*}
The execution time of the program code in the Malliavin method WP and finite difference method in the European call option are $0.6345\times10^{5}$ and $1.2530\times10^{5}$,respectively. in the simple Asian call option are $1.0688\times10^{5}$ and $1.2621\times10^{5}$.\\
Figures \ref{figure4} and \ref{figure3} show the behavior of the terms $\Delta$, $\Delta^W$, $\Delta^N$ and $\Delta^W/2+\Delta^N/2$. The jumps are generated by Hawkes processes.
\begin{figure}[H]
	\centerline{
		\includegraphics[scale=0.27]{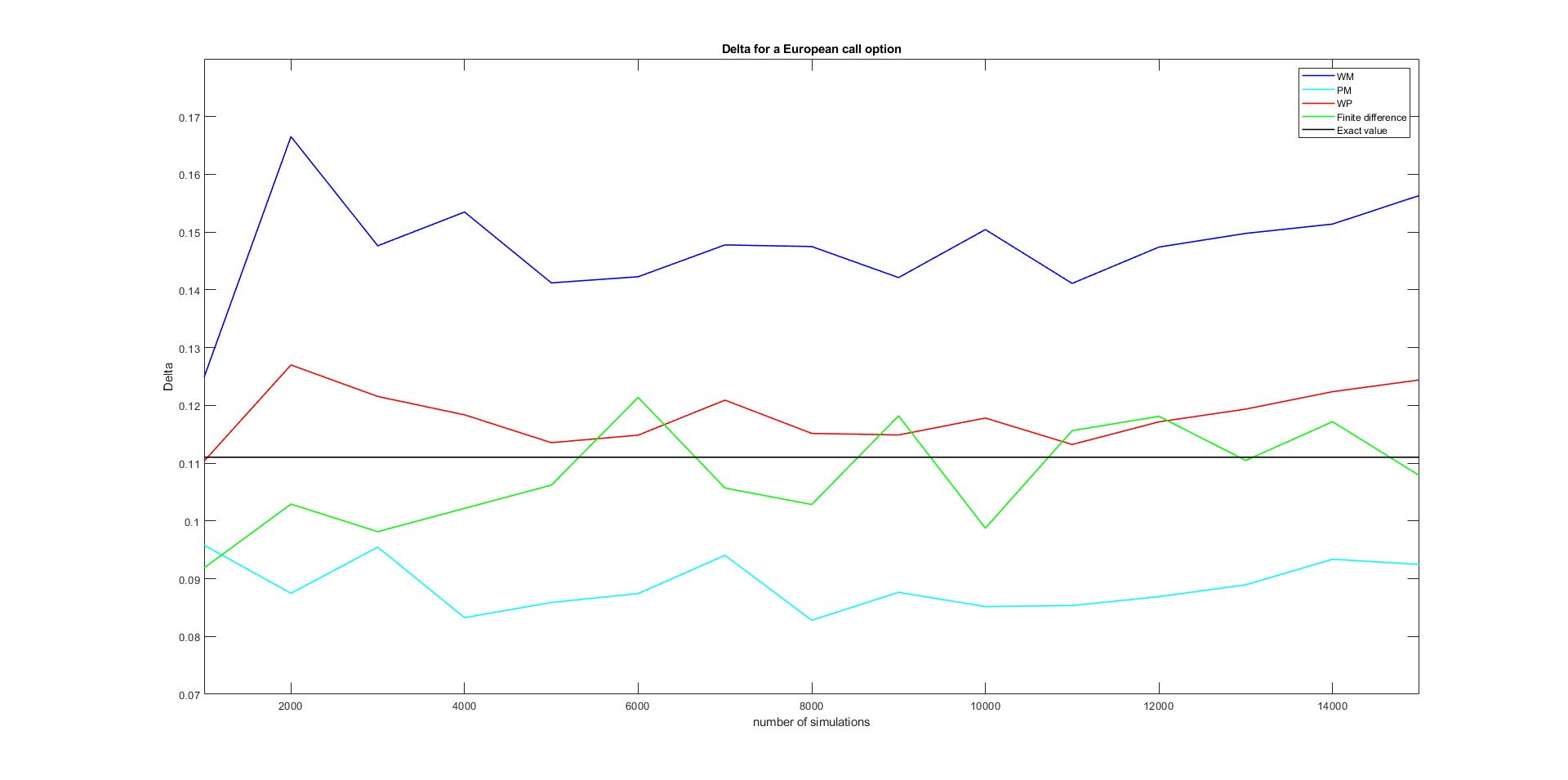}
	}
	\caption{The delta of the European call option under Hawkes process dynamics.}\label{figure4}
\end{figure}
\begin{figure}[H]
	\centerline{\includegraphics[scale=0.27]{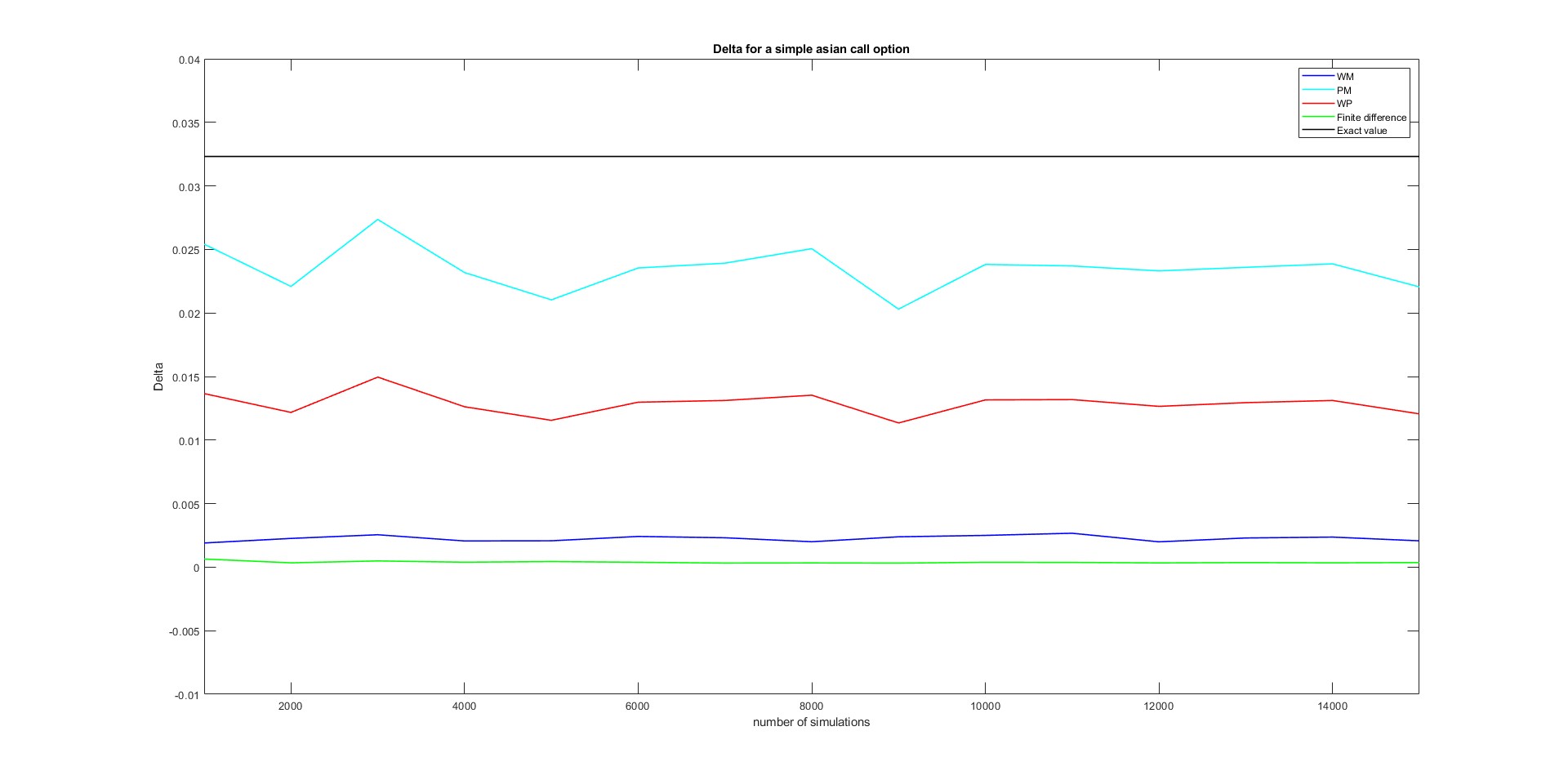}
	}
	\caption{The delta of the simple Asian call option under Hawkes process dynamics.}\label{figure3}
\end{figure}
The mean squared errors for the computation of the delta of the European call option and the simple Asian call option are presented in Tables \ref{tab4} and \ref{tab5}, respectively.

\begin{table}[ht!]
	\begin{center}
		\caption{The mean squared errors of the four methods for the European call option}
		\scalebox{1}{
			\begin{tabular}{ccccc}
				\hline
				$The\ Method$ & $MSE\ of\ the\ European\ call\ option$\\
				\hline 
				$WM $ & $0.0014$\\
				$PM$ & $0.0005$\\
				$WP$ & $0.0001$\\
				$Symmetric\ Finite\ Difference$ &  $0.0001$\\
				\hline
		\end{tabular}}\label{tab4}
	\end{center}
\end{table}
\begin{table}[ht!]
	\begin{center}
		\caption{The mean squared errors of the four methods for the simple Asian call option}
		\scalebox{1}{
			\begin{tabular}{ccccc}
				\hline
				$The\ Method$ & $MSE\ of\ the\ simple\ Asian\ option$\\
				\hline 
				$WM$ & $0.0009$\\
				$PM$ & $0.0001$\\
				$WP$ & $0.0004$\\
				$Symmetric\ Finite\ Difference$ &  $0.0010$\\
				\hline
		\end{tabular}}\label{tab5}
	\end{center}
\end{table}
%%%%%%%%%%%%%%%%%%%%%%%%%%%%%%%%%%%%%%%%%%%%%%%%%%%%%%%%%%%%%%%%%%%%%%%%%%%%%%%%%%%%%%%%%%%%%%%%%%
%\section{Funding}
%This work is based upon research funded by Iran National Science Foundation(INSF) under project No.4022879.
\section{Conclusions}
The main purpose of this article is to compute the delta of exotic options, like Asian options, on the underlying assets drived as Hawkes jump-diffusion models, by Malliavin calculus. This research based on the relationship between the integral operators with respect to the Hawkes process $N$ and those with respect to the associated Poisson process $\overline{N}$. This relationship helps us to control the exponential moments of the solutions of Hawkes SDEs and of the stochastic intensity process $\lambda_t$. It also leads to a new duality relation between the Skorokhod integral of a process with respect to $N$ and the Malliavin derivative of that with respect to $\overline{N}$. As an application, we establish a new Wiener–Poisson weight to compute the delta for both European and Asian options, and compare the results with the exact solution and finite differences using numerical simulations and the Monte Carlo method. The convergence of the numerical method in computation of delta is verified in a separate section.\\

%%%%%%%%%%%%%%%%%%%%%%%%%%%%%%%%%%%%%%%%%%%%%%%%%%%%%%

\appendix
\titleformat{\section}{\normalfont\Large\bfseries}{Appendix \thesection}{1em}{}
\titleformat{\subsection}{\normalfont\large\bfseries}{Appendix \thesection.\thesubsection}{1em}{}
\renewcommand{\subsectionautorefname}{Appendix}

\section{Proof of Lemma \ref{yvaroon}}\label{appendixA}
%{\bf The proof of Lemma \ref{yvaroon}}:\\
From It\^o formula for $f(S_t)=S_t^{-p}$ with $p \geq 1$,
\begin{align*}
	d\big(S_t^{-p}\big)
	&= S_t^{-p}\Big[ \big(-p\mu+\tfrac{p(p+1)}{2}\sigma^2\big)\,dt - p\sigma\,dW_t\Big]
	+ \int_{\mathbb{R}_0} S_{t^-}^{-p}\!\left[\left(1+(e^{J_t}-1)1_{(0,\lambda(t)]}(z)\right)^{-p}-1\right]\tilde{\overline{N}}(dt\times dz) \\
	& + \int_{\mathbb{R}_0} S_{t^-}^{-p}\!\left[\left(1+(e^{J_t}-1)1_{(0,\lambda(t)]}(z)\right)^{-p}-1 + p(e^{J_t}-1)1_{(0,\lambda(t)]}(z)\right]\nu(dz)\,dt.
\end{align*}
or
\begin{align*}
	d\big(S_t^{-p}\big)
	&= S_t^{-p}\Big[ \big(-p\mu+\tfrac{p(p+1)}{2}\sigma^2\big)\,dt - p\sigma\,dW_t\Big] +\int_{\mathbb{R}_0} S_{t^-}^{-p}(e^{-pJ_t}-1)1_{(0,\lambda(t)]}(z)\tilde{\overline{N}}(dt\times dz)\\
	& + \int_{\mathbb{R}_0} S_{t^-}^{-p}\big(e^{-pJ_t}+pe^{J_t}-(1+p)\big) 1_{(0,\lambda(t)]}(z)dzdt.
\end{align*}
Now $S_t^{-p} =S_0^{-p} \exp\!\left\{ -p X_t \right\}$, where
\begin{equation*}
X_t = \Big(\mu- \tfrac{1}{2}\sigma^{2}\Big)t + \sigma W_t 
      + \int_0^t \big(J_s - e^{J_s} + 1\big)\lambda(s)ds
      + \int_0^t \int_{\mathbb{R}_0} J_s 1_{(0,\lambda(t)]}(z)\tilde{\overline{N}}(ds,dz).
\end{equation*}

Expanding explicitly,
\begin{align*}
S_t^{-p}
&= S_0^{-p}\exp\Bigg\{
   -p(\mu- \tfrac{1}{2}\sigma^{2})t
   -p\sigma W_t \\
&-p\int_0^t (J_s - e^{J_s} + 1)\lambda(s)\,ds
   -p\int_0^t \int_{\mathbb{R}_0} J_s 1_{(0,\lambda(t)]}(z)\tilde{\overline{N}}(ds,dz).
\Bigg\}.
\end{align*}
or
\begin{align*}
	S_t^{-p}
	&= S_0^{-p}\exp\Bigg\{
	-p(\mu- \tfrac{1}{2}\sigma^{2})t
	-p\sigma W_t \\
	&+p\int_0^t (e^{J_s} - 1)\lambda(s)\,ds
	-p\int_0^t \int_{\mathbb{R}_0} J_s 1_{(0,\lambda(t)]}(z){\overline{N}}(ds,dz)
	\Bigg\}.
\end{align*}
Using {\bf Condition H1}, and Lemma \ref{ppp0} we conclude
\begin{align*}
\mathbb{E}(S_t^{-p})&\leq\mathbb{E}\Big(
S_0^{-p}\exp\Big\{
-p(\mu- \tfrac{1}{2}\sigma^{2})t
-p\sigma W_t 
+p\int_0^t (e^{J_s} - 1)\lambda(s)ds
\Big\}\Big)\\
&\leq\frac{1}{2}S_0^{-2p}\Big[\mathbb{E}\Big(
\exp\Big\{
-2p\big((\mu- \tfrac{1}{2}\sigma^{2})t
+\sigma W_t \big)\Big\}\Big)+\mathbb{E}\Big(
\exp\Big\{ 2p\sup_{0 \leq s \leq t}\lambda(s)\int_0^T (e^{J_s} - 1)ds\Big\}\Big)\Big]<\infty.
\end{align*}
%%%%%%%%%%%%%%%%%%%%%%%%%
\section{Proof of Lemma \ref{deltadomain}}\label{appendixB}
%{\bf Proof of Lemma \ref{deltadomain} }:\\
According to Theorem 13.15 in \cite{nunno}, it is sufficient to show that $1_{(0, \lambda(t)]}(z)\mathcal{H}_1$ and $1_{(0, \lambda(t)]}(z)\mathcal{H}_2$ belong to $ \mathbb{D}_N^{1,2}$. To do this, we first note that $1+\frac{K}{S_T} <2$, if $S_T > K$, and therefore
\begin{align*}
\int_0^T \int_{\mathbb{R}_+}& \E(\vert 1_{(0, \lambda(t)]}(z)\mathcal{H}_1(t,z) \vert^2)dz dt \\
& \leq 2\int_0^T \int_{\mathbb{R}_+}\E( 1_{(0, \lambda(t)]}(z)\frac{1}{\mathcal{B}_1 (e^{D_{t,z}^{\overline{N}}X_T}-1)^2})dz dt\\
& \leq 2\int_0^T \int_{\mathbb{R}_+}\E( 1_{(0, \lambda(t)]}(z)\frac{1}{(D_{t,z}^{\overline{N}}X_T)^4})dz dt
+2\int_0^T \int_{\mathbb{R}_+}\E( 1_{(0, \lambda(t)]}(z)\frac{1}{\mathcal{B}_1 ^4}) dz dt\\
& \leq 2\int_0^T \int_{\mathbb{R}_+}\E( 1_{(0, \lambda(t)]}(z)\frac{1}{(D_{t,z}^{\overline{N}}X_T)^4})dz dt
+2\int_0^T \int_{\mathbb{R}_+}\E( 1_{(0, \lambda(t)]}(z)\int_{B}\frac{1}{\lambda(s) ^4}ds) dz dt\\
&\leq 2\int_0^T \int_{\mathbb{R}_+}\E( 1_{(0, \lambda(t)]}(z)\frac{1}{(D_{t,z}^{\overline{N}}X_T)^4})dz dt
+2\int_0^T \int_{\mathbb{R}_+}\frac{1}{\lambda_0 ^4}\E( 1_{(0, \lambda(t)]}(z)) dz dt <\infty,
\end{align*}
where we used Lemma \ref{varoon} and \eqref{lamb1} in the last inequality. In a similar way, 
\begin{align*}
\int_0^T \int_{\mathbb{R}_+}\E(\vert 1_{(0, \lambda(t)]}(z)\mathcal{H}_2(t,z) \vert^2)dz dt 
&\leq \int_0^T \int_{\mathbb{R}_+}\E( 1_{(0, \lambda(t)]}(z)\frac{1}{\mathcal{B}_2^2}) dz dt\\
& \leq \int_0^T \int_{\mathbb{R}_+}\E( 1_{(0, \lambda(t)]}(z)\int_{B}\frac{1}{\lambda(s) ^2}ds) dz dt\\
&\leq \int_0^T \int_{\mathbb{R}_+}\frac{1}{\lambda_0 ^2}\E( 1_{(0, \lambda(t)]}(z)) dz dt <\infty.
\end{align*}
Then, $1_{(0, \lambda(t)]}(z)1_{B \cap ([v_1, T] \times  \mathbb{R}_0)}\mathcal{H}_1$ and $1_{(0, \lambda(t)]}(z)1_{B^c \cap ([0, v_1] \times  \mathbb{R}_0)}\mathcal{H}_2$ belong  $L^2(\Omega \times [0, T] \times \mathbb{R}_0)$.\\
In the sequence, we see that for every $w \in \mathcal{A}_1:= \{w \in \Omega, 1_{(0, \lambda(t)]}(z)1_{B \cap ([v_1, T] \times  \mathbb{R}_0)}(t,z)=1\}$, the variable $\mathcal{H}_1(t,z)$, and for every $\mathcal{A}_2:= \{w \in \Omega, 1_{(0, \lambda(t)]}(z)1_{B^c \cap ([0, v_1] \times  \mathbb{R}_0)}(t,z)=1\}$, the variable $\mathcal{H}_2(t,z)$ are differentiale whose derivatives belong $L^2(\Omega \times [0, T] \times \mathbb{R}_0)$ in two steps. \\
To this end, we apply the following lemma from \cite{n19}.
\begin{lemma}\cite{n19}\label{lemma10}
Given $F,G\in\mathbb{D}_{\overline{N}}^{1,2}$ that $FG\in L^{2}(\Omega)$ and $(F + D^{\overline{N}}F)(G + D^{\overline{N}}G)\in L^{2}(\Omega\times [0,T] \times \mathbb{R}_0)$, the product $FG$ also belongs to $\mathbb{D}_{\overline{N}}^{1,2}$ and
	\begin{equation*}
	\nonumber D_{t,z}^{\overline{N}}(FG)=FD_{t,z}^{\overline{N}}G+GD_{t,z}^{\overline{N}}F+D_{t,z}^{\overline{N}}FD_{t,z}^{\overline{N}}G.
	\end{equation*}
\end{lemma}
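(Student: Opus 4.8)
The plan is to reduce the statement to the add-one-cost (difference-operator) representation of the Malliavin derivative with respect to $\overline{N}$. Recall from \cite{nunno, n19} that on $\mathbb{D}_{\overline{N}}^{1,2}$ the operator $D^{\overline{N}}$ acts, for almost every $(t,z)$, as $D_{t,z}^{\overline{N}}H = H(\,\cdot + \epsilon_{(t,z)}) - H(\,\cdot)$, and that a functional $H \in L^2(\Omega)$ belongs to $\mathbb{D}_{\overline{N}}^{1,2}$ precisely when this difference, viewed as a map on $\Omega \times [0,T] \times \mathbb{R}_0$, is square-integrable; in particular $H(\cdot + \epsilon_{(t,z)}) = H + D_{t,z}^{\overline{N}}H$. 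First I would record the elementary identity underlying the formula: since evaluation at the perturbed configuration $\omega \mapsto \omega + \epsilon_{(t,z)}$ is multiplicative, one has $(FG)(\cdot + \epsilon_{(t,z)}) = F(\cdot + \epsilon_{(t,z)})\,G(\cdot + \epsilon_{(t,z)})$. Substituting $F(\cdot + \epsilon_{(t,z)}) = F + D_{t,z}^{\overline{N}}F$ and $G(\cdot + \epsilon_{(t,z)}) = G + D_{t,z}^{\overline{N}}G$ and subtracting $FG$ gives, for every $(\omega,t,z)$,
\[
D_{t,z}^{\overline{N}}(FG) = (F + D_{t,z}^{\overline{N}}F)(G + D_{t,z}^{\overline{N}}G) - FG = F D_{t,z}^{\overline{N}}G + G D_{t,z}^{\overline{N}}F + D_{t,z}^{\overline{N}}F\, D_{t,z}^{\overline{N}}G.
\]
This is exactly the claimed derivative, and it needs no integrability: it is a pointwise consequence of the difference-operator representation.

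It then remains to certify that $FG \in \mathbb{D}_{\overline{N}}^{1,2}$, i.e. the two integrability requirements of the domain. The first, $FG \in L^2(\Omega)$, is a hypothesis. For the second, the identity above shows that $D^{\overline{N}}(FG)$ vanishes wherever $D_{t,z}^{\overline{N}}F = D_{t,z}^{\overline{N}}G = 0$, so it is supported inside $\{D_{t,z}^{\overline{N}}F \neq 0\} \cup \{D_{t,z}^{\overline{N}}G \neq 0\}$, and on this set the leading term $(F + D_{t,z}^{\overline{N}}F)(G + D_{t,z}^{\overline{N}}G)$ is square-integrable on $\Omega \times [0,T] \times \mathbb{R}_0$ by exactly the third hypothesis. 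The delicate point is that one cannot simply add $\|FG\|$ back by the triangle inequality, because $FG$ is constant in $(t,z)$ and hence not square-integrable against the control measure over the whole of $\mathbb{R}_0$; the three hypotheses are arranged so that the \emph{combination} $(F + D^{\overline{N}}F)(G + D^{\overline{N}}G) - FG$, rather than either term separately, is what lands in $L^2(\Omega \times [0,T] \times \mathbb{R}_0)$.

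As a fully rigorous route that avoids this support bookkeeping, I would argue by approximation and closedness. Pick elementary (cylindrical) functionals $F_m, G_m$ converging to $F, G$ in the graph norm of $\mathbb{D}_{\overline{N}}^{1,2}$; for such functionals the difference operator is explicit, the products $F_m G_m$ plainly lie in $\mathbb{D}_{\overline{N}}^{1,2}$, and the product rule holds by the pointwise identity already established. Then $F_m G_m \to FG$ in $L^2(\Omega)$ by Cauchy--Schwarz, while $D^{\overline{N}}(F_m G_m) = (F_m + D^{\overline{N}}F_m)(G_m + D^{\overline{N}}G_m) - F_m G_m$ converges in $L^2(\Omega \times [0,T] \times \mathbb{R}_0)$ to $(F + D^{\overline{N}}F)(G + D^{\overline{N}}G) - FG$, the three hypotheses being precisely what guarantees that this limit lies in $L^2$ and that the convergence is genuine. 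Since $D^{\overline{N}}$ is a closed operator, the two limits identify $FG \in \mathbb{D}_{\overline{N}}^{1,2}$ together with the asserted derivative; the inclusion $\mathbb{D}_{\overline{N}}^{1,2} \subseteq Dom(\delta^{\overline{N}})$ then places the resulting weights in the domain of the Skorokhod integral, as needed when this lemma is invoked in Lemma \ref{deltadomain}.

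In summary, the algebraic identity is the free part and the genuine obstacle is the integrability/closure step. The hypotheses $F,G\in\mathbb{D}_{\overline{N}}^{1,2}$, $FG\in L^2(\Omega)$ and $(F+D^{\overline{N}}F)(G+D^{\overline{N}}G)\in L^2(\Omega\times[0,T]\times\mathbb{R}_0)$ are engineered exactly so that, despite the failure of the naive norm estimate, the shifted product minus $FG$ is square-integrable against the control measure, which is what I expect to be the crux of the verification.
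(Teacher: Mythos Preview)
The paper does not prove this lemma at all: it is quoted verbatim from \cite{n19} (Nualart--Nualart), as indicated by the citation attached to the lemma header, and is used as a black box in Appendices~B and~C. So there is no ``paper's own proof'' to compare against; your write-up is supplying an argument the authors deliberately outsourced.

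That said, your sketch is the standard one and is essentially what one finds in \cite{n19}: the difference-operator representation $D_{t,z}^{\overline{N}}H = H(\cdot+\epsilon_{(t,z)})-H$ makes the product rule a one-line algebraic identity, and the hypotheses are exactly what is needed to push $FG$ into $\mathbb{D}_{\overline{N}}^{1,2}$ via closedness. One small slip: your final aside that ``the inclusion $\mathbb{D}_{\overline{N}}^{1,2}\subseteq Dom(\delta^{\overline{N}})$ then places the resulting weights in the domain of the Skorokhod integral'' conflates two different objects --- $\mathbb{D}_{\overline{N}}^{1,2}$ is a space of random variables (domain of $D^{\overline{N}}$), whereas $Dom(\delta^{\overline{N}})$ consists of processes in $L^2(\Omega\times[0,T]\times\mathbb{R}_0)$. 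What the paper actually uses (see Theorem~13.15 in \cite{nunno}, invoked at the start of Appendix~B) is that a process $u$ whose values lie in $\mathbb{D}_{\overline{N}}^{1,2}$ with appropriate joint integrability belongs to $Dom(\delta^{\overline{N}})$; the lemma is applied pointwise in $(t,z)$ to check that condition. This does not affect the correctness of your proof of the lemma itself.
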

As a consequence, we set $F=\frac{F_1}{G_1}$ and $G=G_1$ for every $F_1, G_1 \in \mathbb{D}_{\overline{N}}^{1, 2}$,  such that $\frac{F_1}{G_1} \in \mathbb{D}_{\overline{N}}^{1, 2}$ and the right hand side of the following equality belongs $ L^{2}(\Omega\times [0,T] \times \mathbb{R}_0)$, we deduce 
\begin{align*}
 D^{\overline{N}}\left(\frac{F_1}{G_1}\right)=\frac{G_1 D^{\overline{N}} F_1-F_1 D^{\overline{N}} G_1}{G_1(G_1+D^{\overline{N}} G_1)}.
\end{align*}
\begin{enumerate}
\item 
First we show that $D^{\overline{N}}(\mathcal{H}_1) \in L^2(\Omega \times [0, T] \times \mathbb{R}_0)$.
We are moving forward step by step to do this. The Malliavin derivative of the process $e^{D^{\overline{N}}_{t, z} X_T}$ is derived in \autoref{appendixD}, explicitly for the rest of the reader, nevertheless; this explicit representation is not directly needed in this proof.
\begin{enumerate}
	\item 
For every $0 \le r \leq t$ and $ z_0 \leq \lambda(r)$, 
	\begin{align*}
		\left \vert D^{\overline{N}}_{r, z_0}\left[\frac{1}{e^{D^{\bar{N}}_{t, z} X_T}-1}\right]\right\vert &= \left\vert \frac{e^{D^{\overline{N}}_{t, z} X_T}\left[e^{D^{\overline{N}}_{r, z_0} D^{\overline{N}}_{t, z} X_T}-1\right]}{\left(e^{D^{\overline{N}}_{t, z} X_T}-1\right)\left[e^{D^{\overline{N}}_{r, z_0} D^{\bar{N}}_{t, z} X_T+D^{\overline{N}}_{t, z} X_T}-1\right]}\right\vert\\
		 &\leq \left\vert\frac{e^{D^{\overline{N}}_{t, z} X_T}}{e^{D^{\overline{N}}_{t, z} X_T}-1}\right\vert \leq 1+ \left\vert\frac{1}{D^{\overline{N}}_{t, z} X_T}\right\vert ,
	\end{align*}
	where we used the facts $D^{\overline{N}}_{t, z} X_T>0$ 
	in the last inequality. Then using Lemmas \ref{ppp0} and \ref{varoon}
	\begin{align}\label{DINT}
		\nonumber
		\mathbb{E}\Big(1_{\mathcal{A}_1}\int_0^T \int_{\mathbb{R}_+} &1_{\{z_0 \leq \lambda(r)\}} \left\lvert D^{\overline{N}}_{r, z_0}\left[\frac{1}{e^{D^{\overline{N}}_{t, z} X_T}-1}\right]\right\vert^2 d z_0 d r\Big) \\ 
		\nonumber
		& \leq  \mathbb{E}\left(\int_0^T \int_{\mathbb{R}_+}1_{\{z_0 \leq\lambda(r)\}} dz_0 dr\right) + \mathbb{E}\left((\frac{1_{(0, \lambda(t)]}(z)}{ D^{\overline{N}}_{t, z} X_T})^2\int_0^T \int_{\mathbb{R}_+}1_{\{z_0 \leq \lambda(r)\}}  dz_0 d r\right)\\
		& \leq T \sup _{0 \leq r \leq T} \mathbb{E}(\lambda(r))+ \mathbb{E}^{\frac{1}{2}}\left((\frac{1_{(0, \lambda(t)]}(z)}{ D^{\overline{N}}_{t, z} X_T})^4\right)\mathbb{E}^{\frac{1}{2}}\left(\int_0^T \int_{\mathbb{R}_+}1_{\{z_0 \leq \lambda(r)\}}  dz_0 d r\right)
		<\infty .
	\end{align}
In addition, from Lemma \ref{lemma10} it follows that
	\begin{equation*}
		D_{r,z_0}(\frac{K}{S_T})=\frac{-KD_{r,z_0}S_T}{S_T(S_T+D_{r,z\_0}S_T)}=\frac{KS_T(e^{D_{r,z_0}X_T}-1)}{S_T^{2}e^{D_{r,z_0}X_T}}=\frac{K(e^{D_{r,z_0}X_T}-1)}{S_Te^{D_{r,z_0}X_T}}.
	\end{equation*}
	Therefore, given $S_T> K$ and Lemma \ref{yvaroon}, we have that $\lvert\frac{K}{S_T}\rvert< 1$
	and also $\frac{e^{D_{r,z_0}X_T}-1}{e^{D_{r,z_0}X_T}}<1$, so it can be concluded that
	\begin{equation}\label{DKST}
		\left\lvert D_{r,z_0}(\frac{K}{S_T})\right\rvert^{2}< 1.
	\end{equation}
	On the other hand, since $D_{r,z_0}(\frac{1}{\mathcal{B}_1})=\frac{-D_{r,z_0}\mathcal{B}_1}{\mathcal{B}_1(\mathcal{B}_1+D_{r,z_0}\mathcal{B}_1)}$, therefore
	\begin{equation}\label{D1B}
		\mathbb{E}\Big(1_{\mathcal{A}_1}\int_0^T \int_{\mathbb{R}_+} 1_{\{z_0 \leq \lambda(r)\}}\left\lvert D_{r,z_0}(\frac{1}{\mathcal{B}_1})\right\rvert^{2}dz_0dr\Big)\leq \mathbb{E}\Big(\frac{\int_0^T \lambda(r)dr}{\mathcal{B}_1^4}\Big)\leq \mathbb{E}\Big(\frac{1}{\int_{0}^{T}\lambda_{s}ds}\Big)^{2}<\infty.
	\end{equation}
	\item
	We now prove that $D_{r,z_0}^{\overline{N}}\big((1+\frac{K}{S_T})H_K(S_T)\big)$ and $D^{\overline{N}}_{r, z_0}\left[\frac{1}{\mathcal{B}_1\big(e^{D^{\overline{N}}_{t, z}X_T}-1\big)}\right]$ belong to $L^2(\Omega \times [0, T] \times \mathbb{R}_0)$. From Lemma \ref{lemma10} and equation \eqref{DKST}, we result
	\begin{align*}
	\mathbb{E}\Big(1_{\mathcal{A}_1}\int_0^T &\int_{\mathbb{R}_+} 1_{\{z_0 \leq \lambda(r)\}}\lvert D_{r,z_0}^{\overline{N}}\big((1+\frac{K}{S_T})H_K(S_T)\big)\rvert^{2}dz_0dr\Big )\\
	&\leq \mathbb{E}\Big(1_{\mathcal{A}_1}\int_0^T \int_{\mathbb{R}_+} 1_{\{z_0 \leq \lambda(r)\}}\lvert H_K(S_T)D_{r,z_0}^{\overline{N}}\big(1+\frac{K}{S_T}\big)\rvert^{2}dz_0dr\Big)\\
	&+\mathbb{E}\Big(1_{\mathcal{A}_1}\int_0^T \int_{\mathbb{R}_+} 1_{\{z_0 \leq \lambda(r)\}}\lvert(1+\frac{K}{S_T})	D_{r,z_0}^{\overline{N}}\big(H_K(S_T)\big)\rvert^{2}dz_0dr\Big)\\
	&+\mathbb{E}\Big(1_{\mathcal{A}_1}\int_0^T \int_{\mathbb{R}_+} 1_{\{z_0 \leq \lambda(r)\}}\lvert D_{r,z_0}^{\overline{N}}\big(H_K(S_T)\big)D_{r,z_0}^{\overline{N}}\big(1+\frac{K}{S_T}\big)\rvert^{2}dz_0dr\Big)\\
	&<\mathbb{E}\Big(1_{\mathcal{A}_1}\int_0^T \lambda(r)dr\Big)(1+4+1)<\infty,
	\end{align*}
	and	from \eqref{DINT} and \eqref{D1B}
\begin{align*}
	\mathbb{E}\Big(1_{\mathcal{A}_1}\int_0^T & \int_{\mathbb{R}_+} 1_{\{z_0 \leq \lambda(r)\}}\lvert D^{\overline{N}}_{r, z_0}[\frac{1}{\mathcal{B}_1(e^{D^{\overline{N}}_{t, z}X_T}-1)}]\rvert^{2}dz_0dr\Big)\\
	&\leq 2\mathbb{E}\Big(1_{\mathcal{A}_1}\int_0^T \int_{\mathbb{R}_+} 1_{\{z_0 \leq \lambda(r)\}}\lvert D_{r,z_0}^{\overline{N}}(\frac{1}{\mathcal{B}_1})\frac{1}{e^{D^{\overline{N}}_{t, z}X_T}-1}\rvert^{2}dz_0dr\Big)\\
	&+
	2\mathbb{E}\Big(1_{\mathcal{A}_1}\int_0^T \int_{\mathbb{R}_+} 1_{\{z_0 \leq \lambda(r)\}}\lvert \frac{1}{\mathcal{B}_1}D_{r,z_0}^{\overline{N}}(\frac{1}{e^{D^{\overline{N}}_{t, z}X_T}-1})\rvert^{2}dz_0dr\Big)\\
    &+2\mathbb{E}\Big(1_{\mathcal{A}_1}\int_0^T \int_{\mathbb{R}_+} 1_{\{z_0 \leq \lambda(r)\}}\lvert D_{r,z_0}^{\overline{N}}(\frac{1}{\mathcal{B}_1})D_{r,z_0}^{\overline{N}}(\frac{1}{e^{D^{\overline{N}}_{t, z}X_T}-1})\rvert^{2}dz_0dr\Big)<\infty.
\end{align*}	
\item From Lemma \ref{lemma10} and the parts (a) and (b) we conclude $ D^{\overline{N}}_{r, z_0}\mathcal{H}_1(t,z)\in L^2(\Omega \times [0, T] \times \mathbb{R}_0)$.
\end{enumerate}
\item In a similar way, one can easily show that $D^{\overline{N}}_{r, z_0}\mathcal{H}_2(t,z)\in {L^{2}(\Omega\times [0,T]\times \mathbb{R}_{0})}$.   
\end{enumerate}
\section{Proof of Lemma \ref{deltadomainY}}\label{appendixC}
 We see that for every $w \in \mathcal{J}_1:= \{w \in \Omega, 1_{(0, \lambda(t)]}(z)1_{C_Y \cap ([v_1, T] \times  \mathbb{R}_0)}(t,z)=1\}$, the variable $\mathcal{H}_3(t,z)$, and for every $\mathcal{J}_2:= \{w \in \Omega, 1_{(0, \lambda(t)]}(z)1_{C_Y^c \cap ([0, v_1] \times  \mathbb{R}_0)}(t,z)=1\}$, the variable $\mathcal{H}_4(t,z)$ are differentiable whose derivatives belong $L^2(\Omega \times [0, T] \times \mathbb{R}_0)$ in two steps.\\
\begin{enumerate}
	\item 
	First we show that $D^{\overline{N}}(\mathcal{H}_3) \in L^2(\Omega \times [0, T] \times \mathbb{R}_0)$.
	\begin{enumerate}
		\item Using Jensen inequality for the function $x\rightarrow\frac{1}{x^{2}}$,
\begin{align}\label{EINTT}
\nonumber	\mathbb{E}\Big(\frac{1}{\int_{t}^{T}S_u(e^{D^{\overline{N}}_{t,z}X_u}-1)du}\Big)^{2}&\leq\mathbb{E}\Big(\int_{t}^{T}\frac{1}{S_u^{2}(e^{D^{\overline{N}}_{t,z}X_u}-1)^{2}}\Big)\\
	&\leq T^{2}\mathbb{E}^{\frac{1}{2}}(\frac{1}{S_u^{4}})\mathbb{E}^{\frac{1}{2}}(\frac{1}{e^{D^{\overline{N}}_{t,z}X_u}-1})^{4}<\infty.
\end{align}
For every $0 \le r \leq t$ and $ z_0 \leq \lambda(r)$,
\begin{align*}
&\left\vert D^{\overline{N}}_{r,z_0}\Big(\frac{1}{\int_{t}^{T}S_u(e^{D^{\overline{N}}_{t,z}X_u}-1)du}\Big)\right\vert\\	&=\left\vert\frac{D^{\overline{N}}_{r,z_0}\int_{t}^{T}S_u(e^{D^{\overline{N}}_{t,z}X_u}-1)du}{\int_{t}^{T}S_u(e^{D^{\overline{N}}_{t,z}X_u}-1)du\big(\int_{t}^{T}S_u(e^{D^{\overline{N}}_{t,z}X_u}-1)du+D^{\overline{N}}_{r,z_o}\int_{t}^{T}S_u(e^{D^{\overline{N}}_{t,z}X_u}-1)du\big)} \right\vert\\
&=\left\vert \frac{\int_{r}^{T}D^{\overline{N}}_{r,z_0}\big(S_u(e^{D^{\overline{N}}_{t,z}X_u}-1)du\big)}{\int_{t}^{T}S_u(e^{D^{\overline{N}}_{t,z}X_u}-1)du\Big(\int_{t}^{T}S_u(e^{D^{\overline{N}}_{t,z}X_u}-1)du+\int_{r}^{T}\big(D^{\overline{N}}_{r,z_o}S_u(e^{D^{\overline{N}}_{t,z}X_u}-1)\big)du\Big)} \right\vert\\
&\leq
\left\vert \frac{\int_{r}^{T}D^{\overline{N}}_{r,z_0}\big(S_u(e^{D^{\overline{N}}_{t,z}X_u}-1)du\big)}{\Big(\int_{t}^{T}S_u(e^{D^{\overline{N}}_{t,z}X_u}-1)du\Big)^{2}+\int_{t}^{T}S_u(e^{D^{\overline{N}}_{t,z}X_u}-1)du\int_{r}^{T}\big(D^{\overline{N}}_{r,z_o}S_u(e^{D^{\overline{N}}_{t,z}X_u}-1)\big)du} \right\vert\\
&\leq
\left\vert \frac{1}{\int_{t}^{T}S_u(e^{D^{\overline{N}}_{t,z}X_u}-1)du} \right\vert.
\end{align*} 
Using equation \eqref{EINTT}, we have
\begin{align*}\label{DINTT}
	\nonumber
	&\mathbb{E}\Big(1_{\mathcal{J}_1}\int_0^T \int_{\mathbb{R}_+} 1_{\{z_0 \leq \lambda(r)\}} \left\lvert D^{\overline{N}}_{r, z_0}\left[\frac{1}{\int_{t}^{T}S_u(e^{D^{\overline{N}}_{t,z}X_u}-1)du}\right]\right\vert^2 d z_0 d r\Big) \\ 
	& \leq   \mathbb{E}\left(\int_0^T \int_{\mathbb{R}_+}1_{\{z_0 \leq \lambda(r)\}} \left(\frac{1_{(0,\lambda(t)]}(z)}{ \int_{t}^{T}S_u(e^{D^{\overline{N}}_{t,z}X_u}-1)du}\right)^2 dz_0 dr\right)<\infty.
\end{align*}
Since $D^{\overline{N}}_{r,z_0}(\frac{1}{\mathcal{C}_1})=\frac{-D^{\overline{N}}_{r,z_0}\mathcal{C}_1}{\mathcal{C}_1(\mathcal{C}_1+D^{\overline{N}}_{r,z_0}\mathcal{C}_1)}$, therefore
\begin{equation}\label{D1C}
	\mathbb{E}\Big(1_{\mathcal{J}_1}\int_0^T \int_{\mathbb{R}_+} 1_{\{z_0 \leq \lambda(r)\}}\left\lvert D^{\overline{N}}_{r,z_0}(\frac{1}{\mathcal{C}_1})\right\rvert^{2} _0 dz_0dr\Big)\leq \mathbb{E}\Big(\frac{1}{\mathcal{C}_1^4}(\int_0^T \lambda(r) dr)^2\Big)\leq \mathbb{E}\Big(\frac{1}{\int_{0}^{T}\lambda(s)ds}\Big)^{2}<\infty.
\end{equation}
According to Hölder inequality, Lemmas \ref{lemma4.1} and \ref{dlambdaexp}, connection with the proof of Theorem 3.1 in \cite{n20} (on page 2116), we conclude
\begin{align}\label{DKYT}
\nonumber	
\mathbb{E} \Big(&1_{\mathcal{J}_1}\int_0^T \int_{\mathbb{R}_+} 1_{\{z_0 \leq \lambda(r)\}}\lvert D^{\overline{N}}_{r,z_0}(K+Y_T)\rvert^{2}dz_0dr\Big)\notag \\
\nonumber
&=\frac{1}{T}\mathbb{E}\Big( 1_{\mathcal{J}_1}\int_0^T \int_{\mathbb{R}_+} 1_{\{z_0 \leq \lambda(r)\}}\lvert\int_{r}^{T}S_u(e^{D^{\overline{N}}_{r,z_0}X_u}-1)du\rvert^{2}dz_0dr\Big)\\
\nonumber
&\leq
\frac{1}{2}\mathbb{E}\Big( 1_{\mathcal{J}_1}\int_0^T \int_{\mathbb{R}_+} 1_{\{z_0 \leq \lambda(r)\}}\Big[\int_{r}^{T}S_u^2du+ \int_{r}^{T}(e^{D^{\overline{N}}_{r,z_0}X_u}-1)^2 du\Big]dz_0dr\Big)\\
\nonumber
&\leq
 \frac{1}{2}\mathbb{E}\Big( \int_0^T \int_{\mathbb{R}_+} \lambda(r)\Big[\int_{r}^{T}S_u^2du+ \int_{r}^{T}(e^{D^{\overline{N}}_{r,z_0}X_u}-1)^2 du\Big]dz_0dr\Big)<\infty.\\
 \end{align}
\item
We now prove that $D_{r,z_0}^{\overline{N}}\big((S_T+Y_T)H_K(S_T)\big)$ and $D^{\overline{N}}_{r, z_0}\left[\frac{1}{\mathcal{C}_1\int_{t}^{T}S_u(e^{D^{\overline{N}}_{t,z}X_u}-1)du}\right]$ belong to $L^2(\Omega \times [0, T] \times \mathbb{R}_0)$. From Lemma \ref{lemma10} and equation \eqref{DKYT}, we can write
	\begin{align*}
	\mathbb{E}\Big(1_{\mathcal{J}_1}\int_0^T &\int_{\mathbb{R}_+} 1_{\{z_0 \leq \lambda(r)\}}\lvert D_{r,z_0}^{\overline{N}}\big((K+Y_T)H_K(Y_T)\big)\rvert^{2}dz_0dr\Big )\\
	&\leq \mathbb{E}\Big(1_{\mathcal{J}_1}\int_0^T \int_{\mathbb{R}_+} 1_{\{z_0 \leq \lambda(r)\}}\lvert H_K(Y_T)D_{r,z_0}^{\overline{N}}\big(K+Y_T\big)\rvert^{2}dz_0dr\Big)\\
	&+\mathbb{E}\Big(1_{\mathcal{J}_1}\int_0^T \int_{\mathbb{R}_+} 1_{\{z_0 \leq \lambda(r)\}}\lvert(K+Y_T)	D_{r,z_0}^{\overline{N}}\big(H_K(Y_T)\big)\rvert^{2}dz_0dr\Big)\\
	&+\mathbb{E}\Big(1_{\mathcal{J}_1}\int_0^T \int_{\mathbb{R}_+} 1_{\{z_0 \leq \lambda(r)\}}\lvert D_{r,z_0}^{\overline{N}}\big(H_K(Y_T)\big)D_{r,z_0}^{\overline{N}}\big(K+Y_T\big)\rvert^{2}dz_0dr\Big)<\infty,
\end{align*}
From \eqref{DINT} and \eqref{D1C}
\begin{align*}
	\mathbb{E}\Big(1_{\mathcal{J}_1}\int_0^T & \int_{\mathbb{R}_+} 1_{\{z_0 \leq \lambda(r)\}}\lvert D^{\overline{N}}_{r, z_0}[\frac{1}{\mathcal{C}_1\int_{t}^{T}S_u(e^{D^{\overline{N}}_{t,z}X_u}-1)du}]\rvert^{2}dz_0dr\Big)\\
	&\leq 2\mathbb{E}\Big(1_{\mathcal{J}_1}\int_0^T \int_{\mathbb{R}_+} 1_{\{z_0 \leq \lambda(r)\}}\lvert D_{r,z_0}^{\overline{N}}(\frac{1}{\mathcal{C}_1})\frac{1}{\int_{t}^{T}S_u(e^{D^{\overline{N}}_{t,z}X_u}-1)du}\rvert^{2}dz_0dr\Big)\\
	&+
	2\mathbb{E}\Big(1_{\mathcal{J}_1}\int_0^T \int_{\mathbb{R}_+} 1_{\{z_0 \leq \lambda(r)\}}\lvert \frac{1}{\mathcal{C}_1}D_{r,z_0}^{\overline{N}}(\frac{1}{\int_{t}^{T}S_u(e^{D^{\overline{N}}_{t,z}X_u}-1)du})\rvert^{2}dz_0dr\Big)\\
	&+2\mathbb{E}\Big(1_{\mathcal{J}_1}\int_0^T \int_{\mathbb{R}_+} 1_{\{z_0 \leq \lambda(r)\}}\lvert D_{r,z_0}^{\overline{N}}(\frac{1}{\mathcal{C}_1})D_{r,z_0}^{\overline{N}}(\frac{1}{\int_{t}^{T}S_u(e^{D^{\overline{N}}_{t,z}X_u}-1)du})\rvert^{2}dz_0dr\Big)<\infty.
\end{align*}
\item From Lemma \ref{lemma10} and the parts (a) and (b) we conclude $ D^{\overline{N}}_{r, z_0}\mathcal{H}_3(t,z)\in L^2(\Omega \times [0, T] \times \mathbb{R}_0)$.
\end{enumerate}
\item In a similar way, one can easily show that $D^{\overline{N}}_{r, z_0}\mathcal{H}_4(t,z)\in {L^{2}(\Omega\times [0,T]\times \mathbb{R}_{0})}$.
\end{enumerate}
%%%%%%%%%%%%%%%%%%%%
\section{}\label{appendixD}
To present an expression for the derivative of $e^{D^{\overline{N}}_{t, z} X_T}$, we are motivated from \cite{n20} and derive the following results.  
\begin{align*}
 & D^{\overline{N}}_{r, z_0}\Big(\int_u^t J_s N_{(u,z)}(ds) \Big)\\
 & =\int_u^t \int_{\mathbb{R}_+}1_{s \leq r} J_s\left(1_{\Big(0, \varphi_s(\overline{N}+\varepsilon_{(u, z)}+\varepsilon_{(r, z_0)}|_{(-\infty, s)})\Big]}(v)- 1_{\Big(0, \varphi_s(\overline{N}+\varepsilon_{(r, z_0)}|_{(-\infty, s)})\Big]}{(v)}\right) (\overline{N}+\varepsilon_{(r, z_0)} )(dv, ds)\\
&+ \int_u^t \int_{\mathbb{R}_+}1_{s > r} J_s\left(1_{\Big(0, \varphi_s(\overline{N}+\varepsilon_{(u, z)}+\varepsilon_{(r, z_0)}|_{(-\infty, s)})\Big]}{(v)}- 1_{\Big(0, \varphi_s(\overline{N}+\varepsilon_{(r, z_0)}|_{(-\infty, s)})\Big]}{(v)}\right) (\overline{N}+\varepsilon_{(r, z_0)} )(dv, ds)\\
& -\int_u^t \int_{\mathbb{R}_+}1_{s \leq r} J_s\left(1_{\Big(0, \varphi_s(\overline{N}+\varepsilon_{(u, z)}|_{(-\infty, s)})\Big]}{(v)}- 1_{\Big(0, \varphi_s(\overline{N}|_{(-\infty, s)})\Big]}{(v)}\right) \overline{N}(dv, ds)\\
& -\int_u^t \int_{\mathbb{R}_+}1_{s > r} J_s\left(1_{\Big(0, \varphi_s(\overline{N}+\varepsilon_{(u, z)}|_{(-\infty, s)})\Big]}{(v)}- 1_{\Big(0, \varphi_s(\overline{N}|_{(-\infty, s)})\Big]}{(v)}\right) \overline{N}(dv, ds)\\
& =\int_u^t \int_{\mathbb{R}_+}1_{s \leq r} J_s\Big(1_{\Big(0, \varphi_s(\overline{N}+\varepsilon_{(u, z)}|_{(-\infty, s)})\Big]}(v)- 1_{\Big(0, \varphi_s(\overline{N}|_{(-\infty, s)})\Big]}{(v)}\Big) (\overline{N}+\varepsilon_{(r, z_0)} )(dv, ds)\\
&+ \int_u^t \int_{\mathbb{R}_+}1_{s > r} J_s\left(1_{\Big(0, \varphi_s(\overline{N}+\varepsilon_{(u, z)}+\varepsilon_{(r, z_0)}|_{(-\infty, s)})\Big]}{(v)}- 1_{\Big(0, \varphi_s(\overline{N}+\varepsilon_{(r, z_0)}|_{(-\infty, s)})\Big]}{(v)}\right) \overline{N}(dv, ds)\\
& -\int_u^t \int_{\mathbb{R}_+}1_{s \leq r} J_s\left(1_{\Big(0, \varphi_s(\overline{N}+\varepsilon_{(u, z)}|_{(-\infty, s)})\Big]}{(v)}- 1_{\Big(0, \varphi_s(\overline{N}|_{(-\infty, s)})\Big]}{(v)}\right) \bar{N}(dv, ds)\\
& -\int_u^t \int_{\mathbb{R}_+}1_{s > r} J_s\left(1_{\Big(0, \varphi_s(\overline{N}+\varepsilon_{(u, z)}|_{(-\infty, s)})\Big]}{(v)}- 1_{\Big(0, \varphi_s(\overline{N}|_{(-\infty, s)})\Big]}{(v)}\right) \overline{N}(dv, ds)\\
& = 1_{r \geq u} J_u \Big(1_{\Big(0, \varphi_r(\overline{N}+\varepsilon_{(u, z)}|_{(-\infty, r)})\Big]}{(z_0)}- 1_{\Big(0, \varphi_r(\overline{N}|_{(-\infty, r)})\Big]}{(z_0)} \Big) \\
& + \int_u^t \int_{\mathbb{R}_+}1_{s > r > u} J_s A_{r,u,z,z_0}(v,s) \overline{N}(dv, ds),\\ 
 \end{align*}
where 
\begin{align*}
A_{r,u,z,z_0}(v,s)&=1_{\Big(0, \varphi_s(\overline{N}+\varepsilon_{(u, z)}+\varepsilon_{(r, z_0)}|_{(-\infty, s)})\Big]}{(v)}-1_{\Big(0, \varphi_s(\overline{N}+\varepsilon_{(r, z_0)}|_{(-\infty, s)})\Big]}{(v)}\\
&-1_{\Big(0, \varphi_s(\overline{N}+\varepsilon_{(u, z)}|_{(-\infty, s)})\Big]}{(v)}+ 1_{\Big(0, \varphi_s(\overline{N}|_{(-\infty, s)})\Big]}{(v)}.
\end{align*}
In the same manner, 
\begin{align*}
 D^{\overline{N}}_{r, z_0}D^{\overline{N}}_{(u,z)}\lambda(s) &= 1_{r \geq u} \alpha e^{-\beta(t-r)} \Big(1_{\Big(0, \varphi_r(\overline{N}+\varepsilon_{(u, z)}|_{(-\infty, r)})\Big]}{(z_0)}- 1_{\Big(0, \varphi_r(\overline{N}|_{(-\infty, r)})\Big]}{(z_0)} \Big) \\
& + \int_u^t \int_{\mathbb{R}_+}1_{s > r > u} \alpha e^{-\beta(t-s)} A_{r,u,z,z_0}(v,s) \overline{N}(dv, ds).\\ 
\end{align*}
In addition, 
\begin{align*}
D^{\overline{N}}_{r, z_0}(J_u 1_{(0, \lambda(u)]}(z))= J_u \Big(1_{(0, \lambda(u)+D^{\overline{N}}_{r, z_0} \lambda(u) ]}(z)-1_{(0, \lambda(u)}(z)\Big).
\end{align*}
%%%%%%%%%%%%%%%%%%%%%%%%%%%%%
\section{Proof of Lemma \ref{dlambdaexp}}\label{appendixE}
%The proof of Lemma \ref{dlambdaexp}:\\
According to the equation \eqref{DNlambda}, and apply the It\^o formula for the function $f(z) = e^{pz}$, $p \geq 1$,
\begin{equation*}
	d\left( e^{p D_{u,z}^{\overline{N}} \lambda_t} \right) = - p\beta D_{u,z}^{\overline{N}} \lambda_{t} e^{p D_{u,z}^{\overline{N}} \lambda_{t}} dt + e^{p D_{u,z}^{\overline{N}} \lambda_{t}} \left( e^{p \alpha\; sign(D_{u,z}^{\overline{N}} \lambda_{t})} - 1 \right) N_{(u,z)}(dt).
\end{equation*}
Therefore, the integral form is as follows.
\begin{equation}\label{pexpmomD}
	e^{p D_{u,z}^{\overline{N}} \lambda_t} = e^{p\alpha 1_{(0,\lambda(u)]}(z)} 
	- p\beta  \int_u^t D_{u,z}^{\overline{N}} \lambda_{s} e^{p D_{u,z}^{\overline{N}} \lambda_{s}} ds
	+ \int_u^t e^{p D_{u,z}^{\overline{N}} \lambda_{s}} \left(e^{p\alpha\; sign(D_{u,z}^{\overline{N}} \lambda_{s})} - 1\right)N_{(u,z)}(ds),
\end{equation}
and from the Jensen inequality
\begin{align*}
	\E\left(e^{p\alpha 1_{(0,\lambda(u)]}(z)}\right)&= \E\left(e^{p\alpha 1_{(0,\lambda(u)]}(z)}\right) - p\beta \int_u^t  \E\left(D_{u,z}^{\overline{N}} \lambda_{s} e^{p D_{u,z}^{\overline{N}} \lambda_{s}}\right)ds\\
	& +  \int_u^t \E\left(D_{u,z}^{\overline{N}} \lambda_{s}e^{p D_{u,z}^{\overline{N}} \lambda_{s}} \big(e^{p\alpha\; sign(D_{u,z}^{\overline{N}} \lambda_{s})} - 1\big)\right)ds\\
	& \leq \E\left(e^{p\alpha 1_{(0,\lambda(u)]}(z)}\right) +\big(e^{p\alpha} - 1- p\beta \big)\int_u^t  \E\left(D_{u,z}^{\overline{N}} \lambda_{s} e^{p D_{u,z}^{\overline{N}} \lambda_{s}}\right) ds\\
	&\leq 1+(e^{p\alpha}-1)\frac{\E(\vert \lambda(u)\vert^2)}{z^2} + \big(e^{p\alpha} - 1- p\beta \big) \int_u^t  \E\left(D_{u,z}^{\overline{N}} \lambda_{s} e^{p D_{u,z}^{\overline{N}} \lambda_{s}}\right) ds.
\end{align*}	
The Gronwall inequality and then the Young inequality show that for every $p \geq 1$, 
\begin{equation}\label{suppexpD}
	\sup_{0 \leq t \leq T} \E(e^{pD_{u,z}^{\overline{N}} \lambda_t} ) < \infty.
\end{equation}
In addition, Taking expectation from the supremum of \eqref{pexpmomD}, and using $sign(D_{u,z}^{\overline{N}} \lambda_{s})>0$, then the Burkholder-Davis-Gundy inequality deduces
\begin{align*}
	\E( \sup_{0 \leq u \leq t \leq T}e^{p D_{u,z}^{\overline{N}} \lambda_t})&\leq  \max\{e^{p\alpha}, 1\}+  (e^{p\alpha} - 1) \E\Big(\Big\vert \int_u^T  e^{2 D_{u,z}^{\overline{N}} \lambda_s} D_{u,z}^{\overline{N}} \lambda_{s} ds\Big\vert^{\frac12}\Big) \\
	&+  (e^{p\alpha} - 1) \int_u^T \E( e^{pD_{u,z}^{\overline{N}} \lambda_{s}}  D_{u,z}^{\overline{N}} \lambda_{s}) ds\\
	& \leq  \max\{e^{p\alpha}, 1\} + \frac12  (e^{p\alpha} - 1)  \int_u^T  \E\Big(  e^{2D_{u,z}^{\overline{N}} \lambda_{s}}D_{u,z}^{\overline{N}} \lambda_{s}\Big) ds + \frac12 (e^{p\alpha} - 1)\\
	& + (e^{p\alpha} - 1)  \int_u^T \E\Big(\frac12 e^{2pD_{u,z}^{\overline{N}} \lambda_{s}}+ \frac12 D_{u,z}^{\overline{N}} \lambda_{s}^2(s)\Big) ds.
\end{align*}	
Finally, since $\E\Big( e^{2pD_{u,z}^{\overline{N}} \lambda_{s}}  D_{u,z}^{\overline{N}} \lambda_{s}\Big) \leq \E\Big( \frac12 e^{4pD_{u,z}^{\overline{N}} \lambda_{s}} + \frac12 D_{u,z}^{\overline{N}} \lambda_{s}^2(s)\Big) $, from the equation \eqref{suppexpD}, and then applying the Gronwall inequality, the proof is completed.
%%%%%%%%%%%%%%%%%%%%%
\section{}\label{appendixF}
{\bf Malliavin calculus on Wiener-Poisson space:}\\
Let us review some concepts of Malliavin calculus on Wiener-Poisson space, as demonstrated in \cite{n19} and \cite{nunno}.  
Consider the Wiener-Poisson space $(\Omega, \mathcal{F}, P)$, and a Poisson random measure $N$ associated with a Lévy process $L$ with the Lévy measure $v$ on a complete separable metric space $(\mathbb{R}_0, \mathcal{B})$. For a positive real number $T$, let $L^{2}([0,T] \times \mathbb{R}_0^{n})$ be the space of symmetric square integrable functions on the $([0,T] \times \mathbb{R}_0^{n}, m \otimes v^{\otimes n})$, where $m$ is an atomless measure on $[0,T]$.
Denote by $\mathcal{S}$ the set of all functionals $F=\varphi(\theta_1,\theta_2,...,\theta_n)$ where $\varphi$ is a smooth function with bounded derivatives of any order and $\theta_i=\int_{0}^{T}f_i(t)dB_t$ with $f_i\in{L^{2}(\left[0,T\right])}$. Define the Wiener Malliavin derivative of $F$ by
\begin{equation*}
D^W_t{F}(w)=\sum_{i=1}^{n}\frac{\partial{\varphi}}{\partial x_i}(\theta_1,....,\theta_n)f_i(t).
\end{equation*}
For every integer $p\geq2$, the domain of the operator $D^W$, denoted by $\mathbb{D}_W^{1,p}$, is the closure of $\mathcal{S}$ with respect to the norm defined by 
\begin{equation*}\label{equ20}
\lVert{F}\rVert_{n,p}=\lVert{F}\rVert_{L^{p}(\Omega)}+\lVert{D^W {F}}\rVert_{L^{p}(\left[0,T\right]^n\times\Omega)}.
\end{equation*}
The Skorohod operator $\delta$ is the adjoint operator of $D^W$ from $L^{2}([0,T] \times \Omega)$ to $\mathbb{D}_W^{1,2}$. The following  duality relation satisfies between this two operator, which states that for given $F\in\mathbb{D}^{1,2}$ and $u \in Dom(\delta^W)$ 
\begin{equation*}
\mathbb{E}\Big(\left\langle{D^WF,u}\right \rangle_{L^{2}[0,T]}\Big):=\mathbb{E}\Big(\int_{0}^{T}(D^W_tF)u_tdt\Big)=\mathbb{E}\Big(F\delta^W(u)\Big).
\end{equation*}
For every adapted process $u$, $\delta^W(u)$ can be represented by the stochastic integral $\int_0^T u(s)dW_s$.\\
On the other hand, given $h\in L^{2}([0,T] \times \mathbb{R}_0^{n})$ and fixed $z\in \mathbb{R}_0^{n}$, we write $h(t,.,z)$ to indicate the function on $\mathbb{R}_0^{n-1}$ given by $(z_1,...,z_{n-1})\to h(t, z_1,...,z_{n-1}, z)$.
Denote by $\mathbb{D}_N^{1,2}$ the set of random variables $F$ in $L^{2}(\Omega)$ with a chaotic decomposition $F=\sum_{n=0}^{\infty}I_n(h_n)$ with $h_n\in L_s^{2}([0,T] \times \mathbb{R}_0^{n})$, satisfying
\begin{equation*}
\nonumber \sum_{n\ge1}nn!\lVert{h_n}\rVert_{L^{2}([0,T] \times \mathbb{R}_0^{n})}^{2}<\infty.
\end{equation*}
For every $F\in \mathbb{D}_N^{1,2}$, the Malliavin derivative $D^{N}F$ defines as the $L^{2}([0,T] \times \mathbb{R}_0)$-valued random variable given by
\begin{equation*}
\nonumber D_{t,z}^{N}F=\sum_{n\ge1}nI_{n-1}(h_n(t, .,z)),\;\;z\in \mathbb{R}_0.
\end{equation*}
The operator $D^{N}$ is a closed operator from $\mathbb{D}_N^{1,2}\subset L^{2}(\Omega)$ into $L^{2}(\Omega\times [0,T] \times \mathbb{R}_0)$ and satisfy the following rule.
\begin{Proposition}\cite{n19}\label{lipF}
	Let $F$ be a random variable in $\mathbb{D}^{1,2}_N$ and let $\varphi$ be a real continuous function such that $\varphi(F)$ belongs to $L^{2}(\Omega)$ and $\varphi(F + D^{N}F)$ belongs to $L^{2}(\Omega\times Z)$. Then $\varphi(F)$ belongs to $\mathbb{D}^{1,2}_N$ and
	\begin{equation}
	\nonumber D_{t,z}^{N}\varphi(F)=\varphi(F+D_{t,z}^{N}F)-\varphi(F).
	\end{equation}
\end{Proposition}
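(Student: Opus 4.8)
The plan is to reduce this to the \emph{add-one-point} (difference-operator) description of the Poisson Malliavin derivative, which turns the composition formula into an almost tautological identity, and then to treat the only real issue --- membership of $\varphi(F)$ in the domain $\mathbb{D}^{1,2}_{N}$ --- by a closedness-plus-approximation argument. Recall that for $F=\sum_{n\ge 0}I_{n}(h_{n})\in\mathbb{D}^{1,2}_{N}$ one has, in $L^{2}(\Omega\times Z)$ with $Z=[0,T]\times\mathbb{R}_{0}$, the pathwise representation $D^{N}_{t,z}F=F(\omega+\varepsilon_{(t,z)})-F(\omega)$, where $\omega+\varepsilon_{(t,z)}$ is the Poisson configuration obtained by adding an atom at $(t,z)$; equivalently $D^{N}$ coincides with the $L^{2}$ difference operator. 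Granting this, and granting $\varphi(F)\in\mathbb{D}^{1,2}_{N}$, the formula follows at once from continuity (hence measurability) of $\varphi$:
\[
D^{N}_{t,z}\varphi(F)=\varphi(F)(\omega+\varepsilon_{(t,z)})-\varphi(F)(\omega)=\varphi\big(F(\omega+\varepsilon_{(t,z)})\big)-\varphi\big(F(\omega)\big)=\varphi\big(F+D^{N}_{t,z}F\big)-\varphi(F).
\]
Thus the whole force of the statement is that $\varphi(F)$ lies in $\mathbb{D}^{1,2}_{N}$, i.e. that the chaos kernels $h^{\varphi(F)}_{n}$ satisfy $\sum_{n\ge1}n\,n!\,\|h^{\varphi(F)}_{n}\|^{2}<\infty$; by the isometry $\|D^{N}G\|^{2}_{L^{2}(\Omega\times Z)}=\sum_{n\ge1}n\,n!\,\|h^{G}_{n}\|^{2}$ this is exactly what the two hypotheses $\varphi(F)\in L^{2}(\Omega)$ and $\varphi(F+D^{N}F)\in L^{2}(\Omega\times Z)$ are meant to secure.

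To run this rigorously I would not assume the difference representation for an arbitrary $F$, but build it up. \textbf{Step 1.} On a dense class --- finite sums of multiple integrals with bounded, compactly supported kernels, or bounded smooth cylindrical Poisson functionals --- $D^{N}$ equals the difference operator (the standard identity between the chaos-defined derivative and the difference operator), so $D^{N}\varphi(F)=\varphi(F+D^{N}F)-\varphi(F)$ holds there for every continuous $\varphi$. \textbf{Step 2.} Choose $F_{n}\to F$ in the graph norm $\|\cdot\|_{L^{2}(\Omega)}+\|D^{N}\cdot\|_{L^{2}(\Omega\times Z)}$ with each $F_{n}$ in that class (truncate the chaos expansion of $F$, smooth the kernels), and pass to a subsequence along which $F_{n}\to F$ and $D^{N}F_{n}\to D^{N}F$ almost everywhere; by continuity of $\varphi$, then $\varphi(F_{n})\to\varphi(F)$ and $\varphi(F_{n}+D^{N}F_{n})\to\varphi(F+D^{N}F)$ a.e. \textbf{Step 3.} Promote these a.e. convergences to $L^{2}$ convergence, and invoke closedness of $D^{N}$ (stated in the preliminaries): since $\varphi(F_{n})\to\varphi(F)$ in $L^{2}(\Omega)$ and $D^{N}\varphi(F_{n})=\varphi(F_{n}+D^{N}F_{n})\to\varphi(F+D^{N}F)-\varphi(F)$ in $L^{2}(\Omega\times Z)$, we conclude $\varphi(F)\in\mathbb{D}^{1,2}_{N}$ and $D^{N}\varphi(F)=\varphi(F+D^{N}F)-\varphi(F)$.

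The hard part is Step 3: because $\varphi$ is merely continuous there is no ready-made dominating function, so a.e. convergence does not by itself yield $L^{2}$ convergence of $\varphi(F_{n}+D^{N}F_{n})$. The clean remedy is a second approximation, now in $\varphi$: replace $\varphi$ by truncations $\varphi_{M}$ (bounded, Lipschitz, equal to $\varphi$ on $[-M,M]$), for which the formula holds by the above with $L^{2}$ convergence a consequence of linear growth and the $L^{2}$ convergence of $F_{n}+D^{N}F_{n}$; then let $M\to\infty$, using $\varphi(F)\in L^{2}(\Omega)$ and $\varphi(F+D^{N}F)\in L^{2}(\Omega\times Z)$ to control the error on $\{|F|>M\}$ and $\{|F+D^{N}F|>M\}$. (Equivalently, check that $\{\varphi(F_{n})\}$ and $\{\varphi(F_{n}+D^{N}F_{n})\}$ are uniformly $L^{2}$-integrable --- the $F_{n}$ and $D^{N}F_{n}$ are $L^{2}$-convergent, hence uniformly integrable, and the hypotheses identify the limits --- and apply Vitali's theorem.) A minor additional obstacle is verifying that the approximating class in Step 1 is dense in $\mathbb{D}^{1,2}_{N}$ and stable enough that the difference-operator identity survives truncation and smoothing; this is routine but worth spelling out.
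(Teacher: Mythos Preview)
The paper does not prove this proposition; it is quoted without proof from Nualart--Nualart \cite{n19} as a background result in \autoref{appendixF}. There is therefore no paper-side argument to compare against.

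Your approach is correct in spirit but more laborious than necessary. You rightly identify the key fact --- that the chaos-defined derivative $D^{N}$ coincides with the add-one-point difference operator $F\mapsto F(\omega+\varepsilon_{(t,z)})-F(\omega)$ --- but you then under-use it. The standard argument (as in \cite{n19}) exploits the full equivalence: not merely that $D^{N}F$ equals the difference for $F\in\mathbb{D}^{1,2}_{N}$, but that $\mathbb{D}^{1,2}_{N}$ is \emph{characterized} as the set of $G\in L^{2}(\Omega)$ whose add-one-point difference lies in $L^{2}(\Omega\times Z)$. With that characterization, membership $\varphi(F)\in\mathbb{D}^{1,2}_{N}$ is immediate from the two hypotheses, since $\varphi(F)\in L^{2}(\Omega)$ and its difference $\varphi(F(\omega+\varepsilon_{(t,z)}))-\varphi(F(\omega))=\varphi(F+D^{N}_{t,z}F)-\varphi(F)$ is in $L^{2}(\Omega\times Z)$; no approximation is needed. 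Your Steps~2--3 (approximate $F$ by finite chaos sums, then $\varphi$ by bounded Lipschitz truncations, and invoke closedness) effectively reprove that characterization by hand, which is why you run into the uniform-integrability and dominated-convergence complications you flag. Those complications are genuine --- for instance, when the L\'evy measure is infinite, a constant bound on $\varphi_{M}(F_{n}+D^{N}F_{n})$ is not in $L^{2}(\Omega\times Z)$, so boundedness alone does not give the dominated convergence you want --- so if you insist on this route you must be careful to dominate by $|\varphi(F+D^{N}F)|$ itself rather than by a constant.
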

The following result characterizes $\delta^{N}$ as the adjoint operator of $D^{N}$.
\begin{Proposition}\cite{n19}\label{ddelta}
If $u\in Dom\delta^{N}$, then $\delta^{N}(u)$ is the unique element of $L^{2}(\Omega)$ such that, for all $F\in\mathbb{D}_N^{1,2}$,
\begin{equation*}
	\nonumber \mathbb{E}(\left \langle {D^{N}F,u} \right \rangle_{L^{2}([0,T] \times\mathbb{R}_0)})=\mathbb{E}(F\delta^{N}(u)).
\end{equation*}
	Conversely, if $u$ is a stochastic process in $L^{2}(\Omega\times [0,T] \times \mathbb{R}_0)$ such that, for some $G\in L^{2}(\Omega)$ and for all $F\in\mathbb{D}_N^{1,2}$,
\begin{equation*}
	\nonumber \mathbb{E}(\left \langle {D^{N}F,u} \right \rangle_{L^{2}([0,T] \times\mathbb{R}_0)})=\mathbb{E}(FG),
\end{equation*}
	then $u$ belongs to $Dom\delta^N$ and $\delta^{N}(u)=G$.
\end{Proposition}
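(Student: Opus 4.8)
The plan is to realize $\delta^{N}$ explicitly through the Wiener--It\^{o} chaos decomposition associated with the Poisson random measure $N$, and then read off both assertions from the isometry and the mutual orthogonality of multiple stochastic integrals. First I would recall that every $u\in L^{2}(\Omega\times[0,T]\times\mathbb{R}_0)$ admits, for a.e.\ fixed $(t,z)$, a chaos expansion $u_{t,z}=\sum_{n\geq0}I_{n}(f_{n}^{(t,z)})$, where $f_{n}\in L^{2}(([0,T]\times\mathbb{R}_0)^{n+1})$ is symmetric in its first $n$ arguments, $f_{n}^{(t,z)}$ denotes the kernel obtained by freezing the last argument at $(t,z)$, and the Wiener--It\^{o} isometry $\lVert u\rVert_{L^{2}(\Omega\times[0,T]\times\mathbb{R}_0)}^{2}=\sum_{n\geq0}n!\,\lVert f_{n}\rVert^{2}$ holds. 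Writing $\widetilde{f_{n}}$ for the symmetrization of $f_{n}$ over all $n+1$ arguments, I would \emph{define} $Dom\,\delta^{N}$ to be the set of $u$ with $\sum_{n\geq0}(n+1)!\,\lVert\widetilde{f_{n}}\rVert^{2}<\infty$ and set $\delta^{N}(u):=\sum_{n\geq0}I_{n+1}(\widetilde{f_{n}})$, which then lies in $L^{2}(\Omega)$ with $\lVert\delta^{N}(u)\rVert_{L^{2}(\Omega)}^{2}=\sum_{n\geq0}(n+1)!\,\lVert\widetilde{f_{n}}\rVert^{2}$.

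The duality relation is then a direct computation. For $F=\sum_{n\geq0}I_{n}(h_{n})\in\mathbb{D}_{N}^{1,2}$ with symmetric kernels $h_{n}$, the definition recalled in \autoref{appendixF} gives $D_{t,z}^{N}F=\sum_{n\geq1}n\,I_{n-1}(h_{n}^{(t,z)})$; pairing with $u$, integrating in $(t,z)$, and using $\mathbb{E}[I_{p}(\varphi)I_{q}(\psi)]=\delta_{pq}\,p!\,\langle\varphi,\psi\rangle$ for symmetric kernels together with $\langle h_{n},f_{n-1}\rangle=\langle h_{n},\widetilde{f_{n-1}}\rangle$ (valid because $h_{n}$ is fully symmetric) yields
\begin{equation*}
\mathbb{E}\Big(\langle D^{N}F,u\rangle_{L^{2}([0,T]\times\mathbb{R}_0)}\Big)=\sum_{n\geq1}n\,(n-1)!\,\langle h_{n},\widetilde{f_{n-1}}\rangle=\sum_{n\geq1}n!\,\langle h_{n},\widetilde{f_{n-1}}\rangle,
\end{equation*}
whereas directly $\mathbb{E}(F\,\delta^{N}(u))=\sum_{n\geq0}(n+1)!\,\langle h_{n+1},\widetilde{f_{n}}\rangle$; the re-indexing $n\mapsto n-1$ shows the two sides coincide, and absolute convergence of every series follows from Cauchy--Schwarz together with $\sum_{n}n\,n!\,\lVert h_{n}\rVert^{2}<\infty$ and $u\in Dom\,\delta^{N}$. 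Uniqueness is then immediate: finite linear combinations of multiple integrals $I_{n}(h_{n})$ are dense in $L^{2}(\Omega)$ and lie in $\mathbb{D}_{N}^{1,2}$, so if some $G'\in L^{2}(\Omega)$ satisfies the same duality, then $\mathbb{E}(F(\delta^{N}(u)-G'))=0$ on a dense subset, whence $\delta^{N}(u)=G'$.

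For the converse, suppose $u\in L^{2}(\Omega\times[0,T]\times\mathbb{R}_0)$ and $G=\sum_{m\geq0}I_{m}(g_{m})\in L^{2}(\Omega)$ satisfy $\mathbb{E}(\langle D^{N}F,u\rangle)=\mathbb{E}(FG)$ for all $F\in\mathbb{D}_{N}^{1,2}$. Testing this identity against $F=I_{n+1}(h)$ for an arbitrary symmetric kernel $h$, the left-hand side equals $(n+1)!\,\langle h,\widetilde{f_{n}}\rangle$ by the computation above and the right-hand side equals $(n+1)!\,\langle h,g_{n+1}\rangle$, so $\widetilde{f_{n}}=g_{n+1}$ for every $n\geq0$; testing against $F\equiv1$, for which $D^{N}F=0$, forces $g_{0}=\mathbb{E}(G)=0$. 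Hence $\sum_{n\geq0}(n+1)!\,\lVert\widetilde{f_{n}}\rVert^{2}=\sum_{n\geq0}(n+1)!\,\lVert g_{n+1}\rVert^{2}=\lVert G\rVert_{L^{2}(\Omega)}^{2}<\infty$, so $u\in Dom\,\delta^{N}$, and $\delta^{N}(u)=\sum_{n\geq0}I_{n+1}(\widetilde{f_{n}})=\sum_{n\geq0}I_{n+1}(g_{n+1})=G$.

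I expect the one genuinely delicate point to be the first step: checking that the chaos kernels of a process in $L^{2}(\Omega\times[0,T]\times\mathbb{R}_0)$ can be chosen jointly measurable in $(t,z)$ and the chaos order, that the symmetrization operator interacts with the $L^{2}$ pairing as used above, and that the series $\sum_{n}I_{n+1}(\widetilde{f_{n}})$ converges in $L^{2}(\Omega)$ exactly on the stated domain---that is, that $\delta^{N}$ is well defined and closed on $Dom\,\delta^{N}$. This is precisely the Fock-space construction of the divergence operator for Poisson noise; once it is in place, Steps~2--4 are routine applications of the isometry, the orthogonality of chaoses, and the density of their span, and require no further estimates.
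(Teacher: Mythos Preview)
The paper does not prove this proposition at all: it is quoted in \autoref{appendixF} as a known result from \cite{n19} (Nualart and Nualart), with no argument given. Your chaos-expansion proof is correct and is precisely the standard Fock-space construction of the divergence one finds in that reference, so there is nothing to compare.
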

\end{document}